\definecolor{highlightArrow}{rgb}{0.33, 0.25, 0.85}
\definecolor{globjects}{rgb}{0.0,0.6,0.2}
\definecolor{functorL}{rgb}{0.7,0.0,0.0}
\definecolor{functorM}{rgb}{0.0,0.2,0.4}
\definecolor{functorF1}{rgb}{0.0,0.5,0.2}
\definecolor{functorF2}{rgb}{0.4,0.0,0.5}
\definecolor{functorF3}{rgb}{0.75,0.6,0.0}
\tikzset{> =stealth}
\tikzset{normalHead/.tip={Triangle[open,angle=60:4pt]},}
\tikzset{normalTail/.tip={Triangle[reversed,open,angle=60:4pt]},}
\newcommand{\addQEDstyle}[2]{\AtBeginEnvironment{#1}{\pushQED{\qed}\renewcommand{\qedsymbol}{#2}}\AtEndEnvironment{#1}{\popQED}}
\theoremstyle{plain}
\newtheorem{theorem}{Theorem}[section]
\newtheorem{lemma}[theorem]{Lemma}
\newtheorem{proposition}[theorem]{Proposition}
\newtheorem{corollary}[theorem]{Corollary}
\theoremstyle{definition}
\newtheorem{definition}[theorem]{Definition}
\newtheorem{example}[theorem]{Example}
\theoremstyle{remark}
\newtheorem{remark}[theorem]{Remark}
\renewcommand{\epsilon}{\varepsilon}
\renewcommand{\phi}{\varphi}
\renewcommand{\C}{\mathcal{C}}
\newcommand{\B}{\mathcal{B}}
\newcommand{\D}{\mathcal{D}}
\newcommand{\Nvar}{\mathcal{N}}
\renewcommand{\G}{\mathcal{G}}
\renewcommand{\H}{\mathcal{H}}
\newcommand{\op}{^\mathrm{op}}
\newcommand{\co}{^\mathrm{co}}
\newcommand{\coop}{^\mathrm{co\,op}}
\newcommand{\inv}{^{-1}}
\newcommand{\id}{\mathrm{id}}
\newcommand{\Id}{\mathrm{Id}}
\newcommand{\coker}{\mathrm{coker}}
\newcommand{\Ker}{\mathrm{Ker}}
\newcommand{\Cat}{\mathrm{Cat}}
\newcommand{\Hom}{\mathrm{Hom}}
\newcommand{\ExtTwo}{{\mathbb{E}\mathrm{xt}}} 
\newcommand{\ExtOne}{\mathrm{Ext}} 
\newcommand{\FLTop}{\mathrm{Top}_\mathrm{lex}}
\newcommand{\FLCat}{\mathrm{Cat}_\mathrm{lex}}
\newcommand{\Homop}{\Hom_\mathrm{op}}
\DeclareMathOperator{\Gl}{Gl}
\newcommand{\splitext}[6]{%
\tikz[baseline]{
\newdimen{\mylabelwidth}
\newdimen{\skipwidth}
\node[anchor=base] (A) {\hspace*{\dimexpr0.5pt-\pgfkeysvalueof{/pgf/inner xsep}}${#1}$};
\settowidth{\mylabelwidth}{\pgfinterruptpicture {$#2$} \endpgfinterruptpicture}
\pgfmathsetlength{\skipwidth}{max(\mylabelwidth,10pt)}
;\node[right] (B) at ([xshift=\skipwidth+12pt]A.east) {${#3}$};
\settowidth{\mylabelwidth}{\pgfinterruptpicture {$#4$} \endpgfinterruptpicture}
\settowidth{\skipwidth}{\pgfinterruptpicture {$#5$} \endpgfinterruptpicture}
\pgfmathsetlength{\skipwidth}{max(\skipwidth,\mylabelwidth,10pt)}
\node[right] (C) at ([xshift=\skipwidth+12pt]B.east) {${#6}$\hspace*{\dimexpr0.5pt-\pgfkeysvalueof{/pgf/inner xsep}}};
\draw[normalTail->] (A) to node [above] {${#2}$} (B);
\draw[transform canvas={yshift=0.5ex},-normalHead] (B) to node [above] {${#4}$} (C);
\draw[transform canvas={yshift=-0.5ex},->] (C) to node [below] {${#5}$} (B);
}}
\tikzset{dot/.style={circle,draw=black,fill=black,minimum size=1mm,inner sep=0mm}}
\newcommand{\colorN}{red!20}
\newcommand{\colorGone}{green!20}
\newcommand{\colorGtwo}{yellow!30}
\newcommand{\colorH}{blue!20}
    \gdef\node@@on@layer{%
      \setbox\tikz@tempbox=\hbox\bgroup\pgfonlayer{#1}\unhbox\tikz@tempbox\endpgfonlayer\egroup}
\def\node@on@layer{\aftergroup\node@@on@layer}
\title{Artin glueings of toposes as adjoint split extensions}
\author[P. F. Faul]{Peter F. Faul}
\address{Department of Pure Mathematics and Mathematical Statistics\\ University of Cambridge}
\email{peter@faul.io}
\author[G. Manuell]{Graham Manuell}
\address{Centre for Mathematics, University of Coimbra, Coimbra, Portugal}
\email{graham@manuell.me}
\author[J. Siqueira]{Jos\'e Siqueira \thanks{This study was financed in part by the Coordenação de Aperfeiçoamento de Pessoal de Nível Superior - Brasil (CAPES), which supported the CAPES scholar Jos\'e Siqueira (process n$^\circ$ 88881.128278/2016-01). }}
\address{Department of Pure Mathematics and Mathematical Statistics\\ University of Cambridge}
\email{jose.siqueira@cantab.net}
\date{\today}
\subjclass[2010]{18B25, 18G50, 54B99}
\keywords{gluing, Artin-Wraith glueing, topoi, left-exact functor, weakly Schreier, Baer sum}
\begin{document}

\maketitle

\begin{abstract}
    Artin glueings of frames correspond to adjoint split extensions in the category of frames and finite-meet-preserving maps. We extend these ideas to the setting of toposes and show that Artin glueings of toposes correspond to a 2-categorical notion of adjoint split extension in the 2-category of toposes, finite-limit-preserving functors and natural transformations. A notion of morphism between these split extensions is introduced, which allows the category $\ExtOne(\H,\Nvar)$ to be constructed. We show that $\ExtOne(\H,\Nvar)$ is equivalent to $\Hom(\H,\Nvar)\op$, and moreover, that this can be extended to a 2-natural contravariant equivalence between the $\Hom$ 2-functor and a naturally defined $\ExtOne$ 2-functor.
\end{abstract}

\section{Introduction}

Artin glueings of toposes were introduced in \cite{sga4vol1} and provide a way to view a topos $\G$ as a combination of an open subtopos $\G_{\mathfrak{o}(U)}$ and its closed complement $\G_{\mathfrak{c}(U)}$. This situation may be described as the `internal' view, but we might instead look at it externally. Here we have that Artin glueings of two toposes $\H$ and $\Nvar$ correspond to solutions to the problem of which toposes $\G$ does $\H$ embed in as an open subtopos and $\Nvar$ as its closed complement. 

There is an analogy to be made with semidirect products of groups. We may either view a group as being generated in a natural way from two complemented subgroups (one of which is normal), or externally, view a semidirect product as a solution to the problem of how to embed groups $H$ and $N$ as complemented subobjects so that $N$ is normal. Of particular importance to us is that semidirect products correspond to split extensions of groups (up to isomorphism).

Artin glueings of Grothendieck toposes decategorify to the setting of frames and in this algebraic setting the analogy to semidirect products has been made precise. In \cite{faul2019artin} it was shown that Artin glueings correspond to certain split extensions in the category of frames with finite-limit-preserving maps. While these results were proved in the setting of frames, it is not hard to see that the arguments carry over to Heyting algebras. It is this view that we now extend back to the elementary topos setting.
We now recall the main results of \cite{faul2019artin}.

In the category of frames with finite-meet-preserving maps, there exist zero morphisms given by the constant `top' maps. This allows us to consider kernels and cokernels. Cokernels always exist and the cokernel of $f\colon N \to G$ is given by $e\colon G \to {\downarrow} f(0)$ where $e(g) = f(0) \wedge g$. This map has a right adjoint splitting $e_*$ sending $h$ to $h^{f(0)}$. Kernels do not always exist, but kernels of cokernels always do, and the kernel of $e\colon G \to {\downarrow} u$ is the inclusion of ${\uparrow} u \subseteq G$. The cokernel is readily seen to be the open sublocale corresponding to $u$ and the kernel the corresponding closed sublocale. This immediately gives that the split extensions whose splittings are adjoint to the cokernel correspond to Artin glueings.

A notion of protomodularity for categories equipped with a distinguished class of split extensions was first introduced in \cite{bourn2013schreier}.
In a similar way, in \cite{faul2019artin} (as well as the current paper) a distinguished role is played by those split extensions whose splittings are right adjoint to the cokernel.

With this correspondence established, the corresponding $\ExtOne$ functor was shown to be naturally isomorphic to the $\Hom$ functor. Each hom-set $\Hom(H,N)$ has an order structure and this order structure was shown to correspond contravariantly in $\ExtOne(H,N)$ to morphisms of split extensions. Finally, it was demonstrated how the meet operation in $\Hom(H,N)$ naturally induces a kind of `Baer sum' in $\ExtOne(H,N)$.

In this paper all of the above results find natural generalisation to the topos setting after we provide definitions for the analogous 2-categorical concepts.
(Also see the paper \cite{niefield2021extensions} by Niefield which appeared after we wrote this paper and discusses the relationship between (non-split) extensions and glueing in a very general context. The current paper uses split extensions and discusses the functorial nature of the construction for toposes in more detail.)

\section{Background}

\subsection{2-categorical preliminaries}

There are a number of 2-categories and 2-categorical constructions considered in this paper and so we provide a brief overview of these here. All the 2-categories we consider shall be strict. 

Briefly, a (strict) 2-category consists of objects, 1-morphisms between objects and 2-morphisms between 1-morphisms. Phrased another way, instead of hom-sets between objects as is the case with 1-categories, for any two objects $A$ and $B$ we have an associated hom-category $\Hom(A,B)$.

Both 1-morphisms and 2-morphisms may be composed under the right conditions. If $F\colon A \to B$ and $G\colon B \to C$ are 1-morphisms, then we may compose them to yield $GF\colon A \to C$. We usually represent this with juxtaposition, though if an expression is particularly complicated we may use $G \circ F$. As with natural transformations, there are two ways to compose 2-morphisms --- vertically and horizontally. We may compose $\alpha \colon F \to G$ and $\beta \colon G \to H$ vertically to give $\beta\alpha \colon F \to H$, sometimes written $\beta \circ \alpha$. Orthogonally, if we have $F_2F_1 \colon A \to C$, $G_2G_1 \colon A \to C$, $\alpha\colon F_1 \to G_1$ and $\beta\colon F_2 \to G_2$, then we may compose $\alpha$ and $\beta$ horizontally to form $\beta \ast \alpha \colon F_2F_1 \to G_2G_1$. Vertical and horizontal composition are related by the so-called interchange law. For each object $A$ there exists an identity 1-morphism $\id_A$ and for each 1-morphism $F$ there exists an identity 2-morphism $\id_F$.

Just as one can reverse the arrows of a category $\mathcal{B}$ to give $\mathcal{B}\op$, one can reverse the 1-morphisms of a 2-category $\C$ to give $\C\op$. It is also possible to reverse the directions of the 2-morphisms yielding $\C\co$ and when both the 1-morphisms and 2-morphisms are reversed we obtain $\C\coop$.

In this paper we will make extensive use of string diagrams. For an introduction to string diagrams for 2-categories see \cite{marsden2014category}. We will use the convention that vertical composition is read from bottom to top and horizontal composition runs diagrammatically from left to right.

We consider 2-functors between 2-categories defined as follows. (We follow the convention that 2-functors are not necessarily strict.)

\begin{definition}
  A \emph{2-functor} $\mathcal{F}$ between 2-categories $\C$ and $\D$ consists of a function $\mathcal{F}$ sending objects $C$ in $\C$ to objects $\mathcal{F}(C)$ in $\D$ and for each pair of objects $C_1$ and $C_2$ in $\C$ a functor $\mathcal{F}_{C_1,C_2} \colon \Hom(C_1,C_2) \to \Hom(\mathcal{F}(C_1),\mathcal{F}(C_2))$, for which we use the same name.
  Additionally, for each pair of composable 1-morphisms $(F,G)$ we have an invertible 2-morphism $\omega_{G,F} \colon \mathcal{F}(G) \circ \mathcal{F}(F) \to \mathcal{F}(G \circ F)$ called the compositor, and for each object $C$ in $\C$ we have an invertible 2-morphism $\kappa_A \colon \Id_{\mathcal{F}(A)} \to \mathcal{F}(\Id_A)$ called the unitor.
  This data satisfies the following constraints.
  \begin{enumerate}
    \item Let $\alpha\colon F_1 \to F_2$ and $\beta\colon G_1 \to G_2$ be horizontally composable 2-morphisms. Then the compositors must satisfy the naturality condition $\omega_{G_2,F_2} \left( \mathcal{F}(\beta) \ast \mathcal{F}(\alpha) \right) = \mathcal{F}(\beta \ast \alpha)\,\omega_{G_1,F_1}$.
    
    \item The compositors must be associative in the sense that if $F \colon X \to Y$, $G\colon Y \to Z$ and $H\colon Z \to W$ are 1-morphisms, then $\omega_{H,GF} (\id_{\mathcal{F}(H)} \ast \omega_{G,F}) = \omega_{HG,F} (\omega_{H,G} \ast \id_{\mathcal{F}(F)})$.
    
    \item If $F \colon X \to Y$ is a 1-morphism, then we have the unit axiom $\omega_{F,\Id_X} (\id_{\mathcal{F}(F)} \ast \kappa_X) = \id_{\mathcal{F}(F)} =  \omega_{\Id_Y,F} (\kappa_Y \ast \id_{\mathcal{F}(F)})$.
  \end{enumerate}
\end{definition}

There is a notion of 2-natural transformation between 2-functors defined as follows. 

\begin{definition}
Let $(\mathcal{F}_1,\omega_1,\kappa_1), (\mathcal{F}_2,\omega_2,\kappa_2)\colon \mathcal{X} \to \mathcal{Y}$ be two 2-functors. A 2-natural transformation $\rho \colon \mathcal{F}_1 \to \mathcal{F}_2$ is given by two families:

\begin{enumerate}
    \item A 1-morphism $\rho_X \colon \mathcal{F}_1(X) \to \mathcal{F}_2(X)$ for each object $X$ in $\mathcal{X}$.
    \item An invertible 2-morphism $\rho_F \colon \mathcal{F}_2(F) \rho_X \to \rho_Y \mathcal{F}_1(F)$ for each 1-morphism $F\colon X \to Y$ in $\mathcal{X}$.
\end{enumerate}

They must satisfy the following coherence conditions. First if $F\colon X \to Y$ and $G\colon Y \to Z$ are 1-morphisms in $\mathcal{X}$, then $\rho$ must respect composition, so that the following diagram commutes.
\begin{center}
  \begin{tikzpicture}[node distance=3.0cm, auto]
    \node (C) {$\rho_Z \circ \mathcal{F}_1(G) \circ \mathcal{F}_1(F)$};
    \node (A) [below of=C] {$\rho_Z \circ \mathcal{F}_1(GF)$};
    \node (D) [left=2.2cm of C] {$\mathcal{F}_2(G) \circ \rho_Y \circ \mathcal{F}_1(F)$};
    \node (E) [left=2.2cm of D] {$\mathcal{F}_2(G) \circ \mathcal{F}_2(F) \circ \rho_X$};
    \node (B) [below of=E] {$\mathcal{F}_2(GF) \circ \rho_X$};
    \draw[<-] (A) to node {$\rho_{GF}$} (B);
    \draw[->] (C) to node {$\rho_Z \kappa_1$} (A);
    \draw[<-] (D) to node [swap] {$\mathcal{F}_2(G) \rho_F$} (E);
    \draw[<-] (C) to node [swap] {$\rho_G \mathcal{F}_1(F)$} (D);
    \draw[->] (E) to node [swap] {$\kappa_2 \rho_X$} (B);
  \end{tikzpicture}
\end{center}

Next for each object $X \in \mathcal{X}$, $\rho$ must respect the identity $\Id_X$. For this we need the following diagram to commute.

\begin{center}
  \begin{tikzpicture}[node distance=3.1cm, auto]
    \node (A) {$\rho_X \circ \Id_{\mathcal{F}_1(X)}$};
    \node (B) [below of=A] {$\rho_X \circ \mathcal{F}_1(\Id_X)$};
    \node (C) [left=1.3cm of A] {$\Id_{\mathcal{F}_2(X)} \circ \rho_X$};
    \node (D) [below of=C] {$\mathcal{F}_2(\Id_X) \circ \rho_X$};
    \draw[->] (A) to node {$\rho_X \omega_1$} (B);
    \draw[double equal sign distance] (A) to node {} (C);
    \draw[<-] (B) to node {$\rho_{\Id_X}$} (D);
    \draw[->] (C) to node [swap] {$\omega_2 \rho_X$} (D);
  \end{tikzpicture}
\end{center}

Finally, they must satisfy the following `naturality' condition for $F,F'\colon X \to Y$ and $\alpha\colon F \to F'$.
\begin{center}
  \begin{tikzpicture}[node distance=3.1cm, auto]
    \node (A) {$\rho_Y \circ \mathcal{F}_1(F)$};
    \node (B) [below of=A] {$\rho_Y \circ \mathcal{F}_1(F')$};
    \node (C) [left=1.3cm of A] {$\mathcal{F}_2(F) \circ \rho_X$};
    \node (D) [below of=C] {$\mathcal{F}_2(F') \circ \rho_X$};
    \draw[->] (A) to node {$\rho_Y \mathcal{F}_1(\alpha)$} (B);
    \draw[<-] (A) to node [swap] {$\rho_F$} (C);
    \draw[<-] (B) to node {$\rho_{F'}$} (D);
    \draw[->] (C) to node [swap] {$\mathcal{F}_2(\alpha) \rho_X$} (D);
  \end{tikzpicture}
\end{center}

When each component $\rho_X$ is an equivalence, we call $\rho$ a \emph{2-natural equivalence}.
\end{definition}

One 2-functor of note is the 2-functor $\mathrm{Op}\colon \Cat\co \to \Cat$ which sends a category $\C$ to its opposite category $\C\op$. We will use this 2-functor in \cref{sec:ext_functor} to help compare two 2-functors of different variances.

Limits and colimits have 2-categorical analogues, which will be used extensively throughout this paper. A more complete introduction to these concepts can be found in \cite{lack2cats}. In particular, we will make use of 2-pullbacks and 2-pushouts, as well as comma and cocomma objects, which we describe concretely below.

\begin{definition}
Given two 1-morphisms $F\colon \B \to \D$ and $G\colon \C \to \D$ their \emph{comma object} is shown in the following diagram
\begin{center}
  \begin{tikzpicture}[node distance=3.5cm, auto]
    \node (A) {$\mathcal{P}$};
    \node (B) [below of=A] {$\B$};
    \node (C) [right of=A] {$\C$};
    \node (D) [right of=B] {$\D$};
    \draw[->] (A) to node [swap] {$P_{G}$} (B);
    \draw[->] (A) to node {$P_{F}$} (C);
    \draw[->] (B) to node [swap] {$F$} (D);
    \draw[->] (C) to node {$G$} (D);
    \begin{scope}[shift=({A})]
        \draw[dashed] +(0.25,-0.75) -- +(0.75,-0.75) -- +(0.75,-0.25);
    \end{scope}
    \draw[double distance=4pt,-implies] ($(B)!0.38!(C)$) to node {$\phi$} ($(B)!0.62!(C)$);
 \end{tikzpicture}
\end{center}
and satisfies the following two conditions.
\begin{enumerate}
    \item Let $T\colon \mathcal{X} \to \B$ and $S\colon \mathcal{X} \to \C$ be 1-morphisms and let $\psi\colon FT \to GS$ be a 2-morphism. Then there exists a 1-morphism $H\colon \mathcal{X} \to \mathcal{P}$ and invertible 2-morphisms $\nu\colon P_GH \to T$ and $\mu\colon P_FH \to S$ satisfying that $G \mu \circ \phi H \circ F \nu\inv = \psi$.
    \begin{center}
  \begin{tikzpicture}[node distance=3.5cm, auto]
    \node (A) {$\mathcal{P}$};
    \node (B) [below of=A] {$\B$};
    \node (C) [right of=A] {$\C$};
    \node (D) [right of=B] {$\D$};
    \draw[->] (A) to node [swap] {$P_G$} (B);
    \draw[->] (A) to node {$P_F$} (C);
    \draw[->] (B) to node {$F$} (D);
    \draw[->] (C) to node {$G$} (D);
    
    \begin{scope}[shift=({A})]
        \draw[dashed] +(0.25,-0.75) -- +(0.75,-0.75) -- +(0.75,-0.25);
    \end{scope}
    
    \draw[double distance=4pt,-implies] ($(B)!0.38!(C)$) to node {$\phi$} ($(B)!0.62!(C)$);
    
    \node (X) [above left=1.2cm and 1.2cm of A.center] {$\mathcal{X}$};
    \draw[out=-90,->] (X) to node [swap] {$T$} node [anchor=center] (T') {} (B);
    \draw[out=0,->] (X) to node {$S$} node [anchor=center] (T) {} (C);
    \draw[dashed, ->, pos=0.55] (X) to node {$H$} (A);
    
    \draw[double distance=4pt,-implies,shorten <=2.5pt,shorten >=1pt] (A) to node [swap] {$\mu$} (T);
    \draw[double distance=4pt,-implies,shorten <=2.5pt,shorten >=1pt] (T') to node [pos=0.75] {$\nu\inv$} (A);
  \end{tikzpicture}
\end{center}
\item If $H,K\colon \mathcal{X} \to \mathcal{P}$ are 1-morphisms and $\alpha\colon P_GH \to P_GK$ and $\beta\colon P_FH \to P_FK$ are 2-morphisms satisfying that $\phi K \circ F \alpha = G \beta \circ \phi H$, then there exists a unique 2-morphism $\gamma\colon H \to K$ such that $P_G\gamma = \alpha$ and $P_F\gamma = \beta$.
\end{enumerate}

\medskip
A 2-pullback is defined similarly, except both $\phi$ and $\psi$ are required to be invertible, and is represented as follows.
\begin{center}
  \begin{tikzpicture}[node distance=3.25cm, auto]
    \node (A) {$\mathcal{P}$};
    \node (B) [below of=A] {$\B$};
    \node (C) [right of=A] {$\C$};
    \node (D) [right of=B] {$\D$};
    \draw[->] (A) to node [swap] {$P_{G}$} (B);
    \draw[->] (A) to node {$P_{F}$} (C);
    \draw[->] (B) to node [swap] {$F$} (D);
    \draw[->] (C) to node {$G$} (D);
    \begin{scope}[shift=({A})]
        \draw +(0.25,-0.75) -- +(0.75,-0.75) -- +(0.75,-0.25);
    \end{scope}
    \draw[double distance=4pt,-implies] ($(B)!0.38!(C)$) to node {$\phi$} ($(B)!0.62!(C)$);
 \end{tikzpicture}
\end{center}

Cocomma objects and 2-pushouts may be defined dually.
\end{definition}

We now recall the definition of a fibration of categories.
\begin{definition}
  Let $F\colon \mathcal{X} \to \mathcal{Y}$ be a functor. A morphism $\overline{f}\colon \overline{A} \to \overline{B}$ in $\mathcal{X}$ is \emph{cartesian} with respect to $F$ if for any $\overline{g}\colon \overline{C} \to \overline{B}$ in $\mathcal{X}$ and $h\colon F(\overline{C}) \to F(\overline{B})$ in $\mathcal{Y}$ with $F(\overline{g}) = F(\overline{f})h$, there exists a unique map $\overline{h}\colon \overline{C} \to \overline{A}$ with $F(\overline{h}) = h$ and $\overline{f}\,\overline{h} = \overline{g}$.
  We say $F\colon \mathcal{X} \to \mathcal{Y}$ is a \emph{(Street) fibration} in $\Cat$ if for any morphism $f\colon A \to F(\overline{B})$ in $\mathcal{Y}$ there exists a cartesian lifting $\overline{f}\colon \overline{A} \to \overline{B}$ and an isomorphism $j\colon F(\overline{A}) \cong A$ with $F(\overline{f}) = f j$.
\end{definition}

In fact, we will also need the notion of a fibration in other 2-categories, such as the 2-category $\FLCat$ of finitely-complete categories and finite-limit-preserving functors. The general definitions of fibrations, morphisms of fibrations and 2-morphisms of fibrations can be found, for example, in \cite[Definitions 3.4.3--3.4.5]{buckley2014fibred}. However, it is not hard to see that the fibrations in $\FLCat$ are simply the finite-limit-preserving functors which are fibrations in $\Cat$.

\subsection{Elementary toposes and Artin glueings}

By a \emph{topos} we mean an \emph{elementary topos} --- that is, a cartesian-closed category admitting finite limits and containing a subobject classifier. The usual 2-category of toposes has 1-morphisms given by \emph{geometric morphisms} and 2-morphisms given by natural transformations. 

For Grothendieck toposes, the subobjects of the terminal object may be imbued with the structure of a frame. Moreover, a geometric morphism between two Grothendieck toposes induces a locale homomorphism between their locales of subterminals. This induces a functor from the category of Grothendieck toposes into the category of locales, which is in fact a reflector.

A \emph{subtopos} of a topos $\mathcal{E}$ is a fully faithful geometric morphism $i\colon \mathcal{S} \hookrightarrow \mathcal{E}$. Subtoposes when acted upon by the localic reflection may sometimes be sent to open or closed sublocales. Those sent to open sublocales we call open subtoposes and those sent to closed sublocales we call closed subtoposes. Since open (or closed) sublocales correspond to elements of the frame, it follows that any open (or closed) subtopos corresponds to a particular subterminal $U$.
This suggests a similar notion of open/closed subtopos corresponding to a particular subterminal $U$ even in the elementary setting.

The open subtopos $\mathcal{E}_{\mathfrak{o}(U)}$ corresponding to a subterminal $U$ has a reflector given by the exponential functor $(-)^U$. It is not hard to see that this topos is equivalent to the slice topos $\mathcal{E} \slash U$, which in turn can be thought as the full subcategory of the objects in $\mathcal{E}$ admitting a map into $U$. From this point of view, the reflector $E\colon \mathcal{E} \to \mathcal{E}/U$ maps an object $X$ to the product $X \times U$. We denote its right adjoint by $E_* = (-)^U$ and write $\theta$ and $\epsilon$ for the unit and counit respectively.
Note that $E_*E(G) = (G \times U)^U \cong G^U$.
In addition to a right adjoint, $E$ also has a left adjoint $E_!$, which is simply the inclusion of $\mathcal{E}/U$ into $\mathcal{E}$.

The closed subtopos $\mathcal{E}_{\mathfrak{c}(U)}$ has reflector $K^*\colon \mathcal{E} \to \mathcal{E}_{\mathfrak{c}(U)}$ given on objects by the following pushout.
\begin{center}
      \begin{tikzpicture}[node distance=3.0cm, auto]
        \node (A) {$G\times U$};
        \node (B) [below of=A] {$U$};
        \node (C) [right of=A] {$G$};
        \node (D) [right of=B] {$K^*(G)$};
        \draw[->] (A) to node [swap] {$\pi_U$} (B);
        \draw[->] (A) to node {$\pi_G$} (C);
        \draw[->] (B) to node [swap] {$p_1^G$} (D);
        \draw[->] (C) to node {$p_2^G$} (D);
        \begin{scope}[shift=({D})]
            \draw +(-0.3,0.7) -- +(-0.7,0.7) -- +(-0.7,0.3);
            \end{scope}
      \end{tikzpicture}
\end{center}

On morphisms $f\colon G \to G'$ in $\mathcal{E}$, $K^*(f)$ is given by the universal property of the pushout in the following diagram.
Here the left, front and top faces commute and so a diagram chase determines that $p_1^{G'}f$ and $p_2^{G'} \id_U$ indeed form a cocone.
\begin{center}
  \begin{tikzpicture}[node distance=4.0cm, auto, on grid,
    cross line/.style={preaction={draw=white, -,line width=6pt}}
    ]
    
    \node (A) {$G' \times U$};
    \node [right of=A] (B) {$U$};
    \node [below of=A] (C) {$G'$};
    \node [right of=C] (D) {$K^*(G')$};
    
    \begin{scope}[shift=({D})]
        \draw [draw=black] +(-0.25,0.75) coordinate (X) -- +(-0.75,0.75) coordinate (Y) -- +(-0.75,0.25) coordinate (Z);
    \end{scope}
    
    \node [above left=1.4cm and 1.9cm of A.center] (A1) {$G \times U$};
    \node [right of=A1] (B1) {$U$};
    \node [below of=A1] (C1) {$G$};
    \node [right of=C1] (D1) {$K^*(G)$};
    
    \begin{scope}[shift=({D1})]
        \draw [draw=black!70] +(-0.22,0.78) coordinate (X1) -- +(-0.78,0.78) coordinate (Y1) -- +(-0.78,0.22) coordinate (Z1);
    \end{scope}
    
    \draw[->] (A1) to node {$\pi_U$} (B1);
    \draw[->] (A1) to node [swap] {$\pi_G$} (C1);
    \draw[->] (B1) to node [pos=0.65] {} (D1);
    \draw[->] (C1) to node [node on layer=over,fill=white,inner sep=2pt,outer sep=0pt,yshift=1pt] (K) {} (D1);
    
    \draw[cross line, ->] (A) to node [pos=0.5] {$\pi_U$} (B);
    \draw[cross line, ->] (A) to node [swap, pos=0.4] {$\pi_{G'}$} (C);
    \draw[cross line, ->] (B) to node {$p_2^{G'}$} (D);
    \draw[cross line, ->] (C) to node [swap] {$p_1^{G'}$} (D);
    
    \begin{scope}
    \begin{pgfonlayer}{over}
    \clip (K.south west) rectangle (K.north east);
    \draw[->, opacity=0.7] (A) to (C);
    \end{pgfonlayer}
    \end{scope}
    
    \draw[->] (A1) to node [yshift=-2pt,pos=0.6] {$f \times \id$} (A);
    \draw[double equal sign distance] (B1) to (B);
    \draw[->] (C1) to node [swap, yshift=2pt] {$f$} (C);
    \draw[dashed, ->] (D1) to node [swap, pos=0.2, xshift=5pt] {$K^*(f)$} (D);
  \end{tikzpicture}
\end{center}

We denote the right adjoint of $K^*$ by $K$ and write $\zeta$ and $\delta$ for the unit and counit respectively.

As expected, $\mathcal{E}_{\mathfrak{o}(U)}$ and $\mathcal{E}_{\mathfrak{c}(U)}$ are complemented subobjects. Given toposes $\H$ and $\Nvar$ we can ask for which toposes $\G$ may $\H$ be embedded as an open subtopos and $\Nvar$ its closed complement. This is solved completely by the Artin glueing construction. For any finite-limit-preserving functor $F\colon \H \to \Nvar$ we may construct the category $\Gl(F)$ whose objects are triples $(N,H,\ell)$ in which $N \in \Nvar$, $H \in \H$ and $\ell\colon N \to F(H)$ and whose morphisms are pairs $(f,g)$ making the following diagram commute.

\begin{center}
  \begin{tikzpicture}[node distance=2.75cm, auto]
    \node (A) {$N$};
    \node (B) [below of=A] {$N'$};
    \node (C) [right of=A] {$F(H)$};
    \node (D) [right of=B] {$F(H')$};
    \draw[->] (A) to node [swap] {$f$} (B);
    \draw[->] (A) to node {$\ell$} (C);
    \draw[->] (B) to node [swap] {$\ell'$} (D);
    \draw[->] (C) to node {$F(g)$} (D);
  \end{tikzpicture}
\end{center}

The category $\Gl(F)$ is a topos, and moreover, the obvious projections $\pi_1 \colon \Gl(F) \to \Nvar$ and $\pi_2\colon \Gl(F) \to \H$ are finite-limit preserving. The projection $\pi_2$ has a right adjoint $\pi_{2*}$ sending objects $H$ to $(F(H),H,\id_{F(H)})$ and morphisms $f$ to $(F(f),f)$. This map $\pi_{2*}$ is a geometric morphism and, in particular, an open subtopos inclusion. Similarly, $\pi_1$ has a right adjoint sending objects $N$ to $(N,1,!)$ and morphisms $f$ to $(f,!)$ where $!\colon N \to 1$ is the unique map to the terminal object. This is itself a geometric morphism, and indeed, a closed subtopos inclusion.

Remarkably, Artin glueings may be viewed as both comma and cocomma objects in the category of toposes with finite-limit-preserving functors. We provide a proof of the latter in \cref{sec:extension_category}.

One sees immediately that $\pi_1\pi_{2*} = F$. This suggests a way to view any open or closed subtopos as corresponding to one in glueing form. If $K\colon \G_{\mathfrak{c}(U)} \to \G$ and $E_*\colon \G/U \to \G$ are respectively the inclusions of open and closed subtoposes, then there is a natural sense in which these maps correspond to $\pi_{1*}\colon \G_{\mathfrak{c}(U)} \to \Gl(K^*E_*)$ and $\pi_{2*}\colon \G/U \to \Gl(K^*E_*)$ respectively. This fact is well known, though a new proof will be provided in \cref{sec:adjoint_extensions}.

We now note that the maps $\pi_1\colon \Gl(F) \to \Nvar$ and $\pi_2\colon \Gl(F) \to \H$ are fibrations. By the above argument, these results apply equally to the inverse image maps of open and closed subtoposes. This is likely well known, though we were unable to find explicit mention of this in the literature.

\begin{proposition}\label{prp:pi2fib}
    Let $F\colon \H \to \Nvar$ be a finite-limit-preserving map between toposes. Then the projection $\pi_2\colon \Gl(F) \to \H$ is a fibration.
\end{proposition}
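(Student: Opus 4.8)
The plan is to produce cartesian liftings for $\pi_2$ explicitly, by pulling back along $F$; this will in fact show that $\pi_2$ is a split fibration. First I would fix an object $(N', H', \ell')$ of $\Gl(F)$ together with a morphism $g\colon H \to H'$ in $\H$. Since $\Nvar$ is a topos it has pullbacks, so I can form the pullback of $\ell'\colon N' \to F(H')$ along $F(g)\colon F(H) \to F(H')$; call its vertex $N$ and its projections $\ell\colon N \to F(H)$ and $f\colon N \to N'$, so that $F(g)\,\ell = \ell'\,f$. This last equation is exactly the condition for $(f, g)$ to be a morphism $(N, H, \ell) \to (N', H', \ell')$ of $\Gl(F)$, and plainly $\pi_2(f, g) = g$, so this will be the candidate lift.

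The second step is to verify that $(f, g)$ is cartesian with respect to $\pi_2$. Suppose we are given an object $(M, J, m)$ of $\Gl(F)$, a morphism $(u_1, u_2)\colon (M, J, m) \to (N', H', \ell')$, and a morphism $h\colon J \to H$ with $g h = u_2$. Any factorisation of $(u_1, u_2)$ through $(f, g)$ lying over $h$ must have the form $(h_1, h)$ with $h_1\colon M \to N$, and the two constraints on it --- that $(f,g)(h_1,h) = (u_1,u_2)$ and that $(h_1, h)$ actually be a morphism of $\Gl(F)$ --- amount to $f\,h_1 = u_1$ and $\ell\,h_1 = F(h)\,m$ respectively. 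The pair $(u_1, F(h)\,m)$ is a compatible cone for the pullback defining $N$, because $F(g)\,F(h)\,m = F(gh)\,m = F(u_2)\,m = \ell'\,u_1$, where the first and last equalities use $gh = u_2$ and the commuting square defining the morphism $(u_1, u_2)$. Hence the universal property of the pullback produces a unique such $h_1$, giving the required unique factorisation. Finally, since the domain $(N, H, \ell)$ of the constructed lift already lies strictly over $H$, the comparison isomorphism demanded by the definition of a Street fibration may be taken to be $\id_H$. This shows $\pi_2$ is a fibration.

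I do not expect a genuine obstacle; the content is modest and the only thing requiring attention is keeping the handful of commuting triangles straight. Conceptually, the fibre of $\pi_2$ over an object $H$ is the slice topos $\Nvar/F(H)$, and reindexing along $g\colon H \to H'$ is pullback along $F(g)$, so $\pi_2$ is just the fibration associated with the pseudofunctor $\H\op \to \Cat$ sending $H$ to $\Nvar/F(H)$ and $g$ to $F(g)^*$. Note that the argument nowhere uses that $F$ preserves finite limits, only that $\Nvar$ admits pullbacks; and the companion statement for $\pi_1\colon \Gl(F) \to \Nvar$ follows by the same method --- in fact more easily, since there one reindexes simply by precomposing $\ell$ with the base morphism and no pullback is needed.
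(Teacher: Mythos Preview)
Your proof is correct and follows exactly the same approach as the paper: construct the cartesian lift of $g\colon H \to H'$ at $(N',H',\ell')$ by pulling back $\ell'$ along $F(g)$ in $\Nvar$, and deduce the cartesian property from the universal property of that pullback. The paper is terser --- it simply states the pullback and says ``the cartesian property follows from the universal property'' --- whereas you spell out the verification in full; your closing remarks about the associated pseudofunctor and the unused hypothesis on $F$ are accurate and not in the paper.
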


\begin{proof}
We must show that if $f\colon H' \to H$ is a morphism in $\H$, then for every object $(N,H,\ell)$ in $\Gl(F)$ there exists a cartesian lifting. This lifting is given by $(P_f,f)\colon (\overline{N},H',\overline{\ell}) \to (N,H,\ell)$ where $\overline{N}$, $\overline{\ell}$ and $P_f$ are defined by the following pullback.

\begin{center}
 \begin{tikzpicture}[node distance=3.0cm, auto]
    \node (A) {$\overline{N}$};
    \node (B) [right of=A] {$N$};
    \node (C) [below of=A] {$F(H')$};
    \node (D) [below of=B] {$F(H)$};
    \draw[->] (A) to node {$P_f$} (B);
    \draw[->] (A) to node [swap] {$\overline{\ell}$} (C);
    \draw[->] (B) to node {$\ell$} (D);
    \draw[->] (C) to node [swap] {$F(f)$} (D);
    \begin{scope}[shift=({A})]
        \draw +(0.25,-0.75) -- +(0.75,-0.75) -- +(0.75,-0.25);
    \end{scope}
 \end{tikzpicture}
\end{center}

The cartesian property of $(P_f,f)$ follows from the universal property of the pullback.
\end{proof}

\begin{proposition}\label{prp:pi1fib}
    Let $F\colon \H \to \Nvar$ be a finite-limit-preserving map between toposes. Then the projection $\pi_1\colon \Gl(F) \to \Nvar$ is a fibration.
\end{proposition}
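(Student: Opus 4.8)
The plan is to mimic the proof of \cref{prp:pi2fib}, noting that here the cartesian liftings are even simpler, because $\pi_1$ forgets the $\H$-component, which is precisely where the pullback was needed before. Concretely, given a morphism $f\colon N' \to N$ in $\Nvar$ and an object $(N,H,\ell)$ of $\Gl(F)$, I claim the cartesian lifting along $f$ is the pair $(f,\id_H)\colon (N',H,\ell\circ f) \to (N,H,\ell)$. The first step is to check that this really is a morphism of $\Gl(F)$: the defining square commutes trivially, since $F(\id_H) = \id_{F(H)}$ and both composites around it equal $\ell\circ f$.

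The second step is to verify the cartesian property directly from the definition. Suppose $(g_1,g_2)\colon (M,J,m) \to (N,H,\ell)$ is a morphism of $\Gl(F)$ and $h\colon M \to N'$ is a morphism of $\Nvar$ with $\pi_1(g_1,g_2) = \pi_1(f,\id_H)\circ h$, that is, $g_1 = f\circ h$. Writing a candidate lift as $(h_1,h_2)$, the requirement $\pi_1(h_1,h_2) = h$ forces $h_1 = h$, and the requirement $(f,\id_H)\circ(h_1,h_2) = (g_1,g_2)$ forces $h_2 = g_2$; so the lift, if it exists, is unique and equal to $(h,g_2)$. It then remains to confirm that $(h,g_2)$ is indeed a morphism $(M,J,m)\to(N',H,\ell\circ f)$, i.e. that $(\ell\circ f)\circ h = F(g_2)\circ m$. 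This is obtained by precomposing the commuting square of $(g_1,g_2)$, namely $\ell\circ g_1 = F(g_2)\circ m$, with the identity $g_1 = f\circ h$.

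Finally, since the definition asks for a Street fibration, one records that the accompanying isomorphism $j\colon \pi_1(N',H,\ell\circ f) \cong N'$ may be taken to be the identity, as $\pi_1(f,\id_H) = f$ on the nose. I do not expect any genuine obstacle here; the only point requiring a little care is keeping track of the variance in the square that defines morphisms of $\Gl(F)$, so that the action of the lifted object comes out as the precomposite $\ell\circ f$ rather than something involving $F$.
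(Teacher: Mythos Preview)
Your proposal is correct and follows essentially the same approach as the paper: both give the cartesian lifting as $(f,\id_H)\colon (N',H,\ell f)\to(N,H,\ell)$, deduce that the unique candidate lift is $(h,g_2)$, and verify it is a morphism of $\Gl(F)$ via $\ell g_1 = F(g_2)m$ together with $g_1 = fh$. Your version is slightly more explicit in checking that $(f,\id_H)$ is a morphism and in recording that the Street-fibration isomorphism $j$ may be taken to be the identity, but the argument is otherwise the same.
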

\begin{proof}
    We must show that if $f\colon N' \to N$ is a morphism in $\Nvar$ then for every object $(N,H,\ell)$ in $\Gl(F)$ there exists a cartesian lifting. This map is given by $(f,\id_H)\colon (N',H,\ell f) \to (N,H,\ell)$.
    
    To see that this map satisfies the universal property, suppose that we have a morphism $(g_1,g_2)\colon (A,B,k) \to (N,H,\ell)$ which is mapped by $\pi_1$ to $fh$.
    We must show there is a unique map $\overline{h}$ such that $(g_1,g_2) = (f,\id_H) \overline{h}$ and $\pi_1(\overline{h}) = h$. These constraints imply that $\overline{h} = (h,g_2)$.
    
    To see that $(h,g_2)$ is a morphism in $\Gl(F)$, we consider the following diagram.
    The left-hand square commutes as $\pi_1(g_1,g_2) = fh$ and the right-hand square commutes since $(g_1,g_2)$ is a morphism.
    \begin{center}
      \begin{tikzpicture}[node distance=2.75cm, auto]
        \node (A) {$A$};
        \node (B) [right of=A] {$A$};
        \node (C) [right of=B] {$F(B)$};
        \node (D) [below of=A] {$N$};
        \node (E) [right of=D] {$N'$};
        \node (F) [right of=E] {$F(H)$};
        \draw[double equal sign distance] (A) to (B);
        \draw[->] (B) to node {$k$} (C);
        \draw[->] (D) to node [swap] {$f$} (E);
        \draw[->] (E) to node [swap] {$\ell$} (F);
        \draw[->] (B) to node [swap] {$g_1$} (E);
        \draw[->] (A) to node [swap] {$h$} (D);
        \draw[->] (C) to node {$F(g_2)$} (F);
     \end{tikzpicture}
    \end{center}
    Finally, we immediately see that $(f,\id_H)(h,g_2) = (g_1,g_2)$, as required.
\end{proof}

\section{Adjoint extensions}

In generalising the frame results to the topos setting, it is clear that the appropriate 2-category to consider is $\FLTop$, the 2-category of toposes, finite-limit-preserving functors and natural transformations. (This is the horizontal bicategory of the double category of toposes considered in \cite{niefield2012glueing}.) For convenience, we will assume that $1$ always refers to a distinguished terminal object in a topos, and $0$ a distinguished initial object.

We will now introduce the necessary concepts in order to discuss extensions of toposes and show how Artin glueings can be viewed as adjoint extensions. In particular, the definition of extension will require notions of kernel and cokernel.

\subsection{Zero morphisms, kernels and cokernels}

The definition of extensions requires a notion of zero morphisms. Let us now define these in the 2-categorical context.

\begin{definition}\label{def:pointed_2_cat}
A \emph{pointed 2-category} is a 2-category equipped with a class $\mathcal{Z}$ of 1-morphisms (called \emph{zero morphisms)} satisfying the following conditions:
\begin{itemize}
    \item $\mathcal{Z}$ contains an object of each hom-category,
    \item $\mathcal{Z}$ is an ideal with respect to composition (as in \cite{ehresmann}) --- that is, $g \in \mathcal{Z} \implies fgh \in \mathcal{Z}$,
    \item $\mathcal{Z}$ is closed under 2-isomorphism in the sense that if $f \in \mathcal{Z}$ and $f' \cong f$ then $f' \in \mathcal{Z}$,
    \item for any parallel pair $f_1,f_2$ of morphisms in $\mathcal{Z}$, there is a unique 2-morphism $\xi\colon f_1 \to f_2$.
\end{itemize}
\end{definition}

\begin{definition}
A \emph{zero object} in a 2-category is an object which is both 2-initial and 2-terminal.
\end{definition}

\begin{lemma}
A 2-terminal or 2-initial object in a pointed 2-category is always a zero object. Furthermore, any 2-category with a zero object has a unique pointed 2-category structure where the zero morphisms are those 1-morphisms which factor through the zero object up to 2-isomorphism. 
\end{lemma}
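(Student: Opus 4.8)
The statement has two parts, which I would treat separately.

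First, let $\C$ be a pointed 2-category with class $\mathcal{Z}$ of zero morphisms and suppose $T$ is 2-terminal (the 2-initial case is dual). The crucial observation is that $\id_T$ is a zero morphism: $\mathcal{Z}$ contains an object of $\Hom(T,T)$, which, since $T$ is 2-terminal and hence $\Hom(T,T)\simeq\mathbf{1}$, is 2-isomorphic to $\id_T$; as $\mathcal{Z}$ is closed under 2-isomorphism, $\id_T\in\mathcal{Z}$. Since $\mathcal{Z}$ is a composition ideal, every 1-morphism with domain $T$ therefore lies in $\mathcal{Z}$. Now fix an object $A$. Then $\Hom(T,A)$ is non-empty (it contains an object of $\mathcal{Z}$), and any two of its objects are parallel zero morphisms, so by the last condition of \cref{def:pointed_2_cat} there is a unique 2-morphism between them in either direction; composing these and invoking uniqueness again shows that they are mutually inverse. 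Hence every hom-set of $\Hom(T,A)$ is a singleton and all its objects are isomorphic, so $\Hom(T,A)\simeq\mathbf{1}$. As $A$ was arbitrary, $T$ is 2-initial, and therefore a zero object.

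For the second part, let $\C$ be a 2-category with a zero object $0$. Since $0$ is both 2-terminal and 2-initial, for all objects $X,Y$ there is a 1-morphism $X\to 0$, unique up to a unique invertible 2-morphism, and likewise a 1-morphism $0\to Y$; fix a composite $z_{X,Y}\colon X\to Y$ of such. Let $\mathcal{Z}_0$ be the class of 1-morphisms 2-isomorphic to some $z_{X,Y}$; since any 1-morphism $X\to 0$ (and likewise any $0\to Y$) is 2-isomorphic to the chosen one, $\mathcal{Z}_0$ is exactly the class of 1-morphisms factoring through $0$ up to 2-isomorphism. I would first prove uniqueness: if $\mathcal{Z}$ is any class of zero morphisms making $\C$ pointed, then from $\Hom(0,0)\simeq\mathbf{1}$ and closure under 2-isomorphism we get $\id_0\in\mathcal{Z}$, whence $z_{X,Y}=(0\to Y)\circ\id_0\circ(X\to 0)\in\mathcal{Z}$ by the ideal property, and so $\mathcal{Z}_0\subseteq\mathcal{Z}$ by closure; conversely any $f\in\mathcal{Z}$ is parallel to $z_{X,Y}\in\mathcal{Z}_0\subseteq\mathcal{Z}$, hence 2-isomorphic to it by the last condition of \cref{def:pointed_2_cat}, hence in $\mathcal{Z}_0$. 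Thus $\mathcal{Z}=\mathcal{Z}_0$.

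It remains to check that $\mathcal{Z}_0$ itself satisfies the four conditions of \cref{def:pointed_2_cat}. That it contains an object of each hom-category and is closed under 2-isomorphism is immediate, and the ideal property holds because pre- or post-composing $z_{X,Y}$ with any 1-morphism again factors through $0$ (using that composition respects 2-isomorphism). The substantive point --- and the step I expect to be the main obstacle --- is the last condition: that any two parallel zero morphisms $f_1,f_2\colon X\to Y$ admit a \emph{unique} 2-morphism between them. Existence is clear, since $f_1\cong z_{X,Y}\cong f_2$. For uniqueness, transporting along these 2-isomorphisms reduces the problem to showing that $z_{X,Y}=(0\to Y)\circ(X\to 0)$ has no 2-endomorphism besides the identity; the natural strategy is to whisker any such 2-endomorphism on the left by a 1-morphism $Y\to 0$ and on the right by a 1-morphism $0\to X$, landing in $\Hom(X,0)\simeq\mathbf{1}$ and $\Hom(0,Y)\simeq\mathbf{1}$ --- where every 2-endomorphism is trivial --- and then to recover the original 2-morphism from these whiskerings. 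Making this final step precise is the delicate heart of the argument, and is exactly where the two-dimensional universal property of the zero object must be used in full.
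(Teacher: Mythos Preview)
The paper states this lemma without proof, so there is nothing to compare against directly; your argument for the first assertion and for the uniqueness half of the second is correct and is the natural one.

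The difficulty you isolate at the end is genuine, and your whiskering strategy cannot be completed: the existence claim in the second sentence appears to be false for general 2-categories. Consider the strict 2-category with objects $0$ and $A$, with $\Hom(0,0)=\Hom(0,A)=\Hom(A,0)=\mathbf{1}$ (so $0$ is a zero object), and with $\Hom(A,A)$ the category having two objects $\id_A$ and $z$, no morphisms between them, $\End(\id_A)$ trivial and $\End(z)=\mathbb{Z}/2$. Horizontal composition on $\Hom(A,A)$ makes $\id_A$ the unit and $z$ absorbing on objects; on 2-cells it is determined by the unit law together with $\alpha * \beta = \id_z$ for all $\alpha,\beta\in\End(z)$, and the remaining composition functors are forced since their targets are terminal. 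Associativity is routine --- indeed, the identity $\alpha * \id_z = \id_z$ is exactly what associativity of $\Hom(A,A)\times\Hom(0,A)\times\Hom(A,0)\to\Hom(A,A)$ forces, and interchange then yields $\alpha * \beta = (\alpha * \id_z)\circ(\id_z * \beta) = \id_z$. Here $z=j\circ p$ factors through $0$ yet has a nontrivial 2-endomorphism, so $\mathcal{Z}_0$ fails the fourth axiom of \cref{def:pointed_2_cat}. Your whiskering idea explains precisely why: post- or pre-composing the nontrivial $\sigma\in\End(z)$ with any map to or from $0$ lands in a terminal hom-category and hence annihilates $\sigma$, but nothing in the axioms lets you recover $\sigma$ from those whiskerings. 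In $\FLTop$ the conclusion \emph{does} hold, for the concrete reason that a natural transformation between functors constant at terminal objects has each component uniquely determined; but this is a feature of that particular 2-category, not a formal consequence of possessing a zero object.
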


\begin{remark}
 \Cref{def:pointed_2_cat} is a categorification of pointed-set-enriched categories.
 Pointed categories are often instead defined as those having a zero object. The previous lemma shows that our definition agrees with a definition in terms of zero objects when the category has a terminal or initial object.
\end{remark}

\begin{definition}
    The \emph{2-cokernel} of a morphism $f\colon A \to B$ in a pointed 2-category is an object $C$ equipped with a morphism $c\colon B \to C$ such that $cf$ is a zero morphism and which is the universal such in the following sense.
    \begin{enumerate}
        \item If $t \colon B \to X$ is such that $tf$ is a zero morphism, then there exists a morphism $h \colon C \to X$ such that $hc$ is isomorphic to $t$.
        \item Given $h,h'\colon C \to X$ and $\alpha \colon hc \to h'c$, there is a unique $\gamma\colon h \to h'$ such that $\gamma c = \alpha$.
    \end{enumerate}
    
    The \emph{2-kernel} of a morphism in a pointed 2-category $\C$ is simply the 2-cokernel in $\C\op$.
\end{definition}
A similar notion was defined for groupoid-enriched categories in \cite{abelian2d}.

Note that these may also be defined in terms of 2-pushouts or 2-coequalisers involving the zero morphism (in the same way kernels and cokernels can be defined in the 1-categorical setting). Although there would usually be coherence conditions that need to be satisfied, the uniqueness of natural isomorphisms between zero morphisms eliminates all of them in the case of 2-cokernels.

\begin{remark}
Note that the condition (2) for 2-kernels is simply the statement that the 2-kernel map is a fully faithful 1-morphism.
Moreover, since an adjoint of a fully faithful morphism is fully faithful in the opposite 2-category,
we have that when a putative 2-cokernel $c \colon B \to C$ has a (left or right) adjoint $d$, then condition (2) for 2-cokernels is equivalent to $d$ being fully faithful.
\end{remark}

We can now consider how these concepts behave in our case of interest.
Note that $\FLTop$ has a zero object, the trivial topos. Then zero morphisms in $\FLTop$ are precisely those functors which send every object to a terminal object.

In $\FLTop$, 2-cokernels of morphisms $F\colon \Nvar \to \G$ always exist and are given by the open subtopos corresponding to $F(0)$.

\begin{proposition}\label{prop:cokernelsexist}
    The 2-cokernel of $F\colon \Nvar \to \G$ is given by $E\colon \G \to \G/F(0)$ sending objects $G$ to $G \times F(0)$ and morphisms $f\colon G \to G'$ to $(f,id_{F(0)}) \colon G\times F(0) \to G' \times F(0)$.
\end{proposition}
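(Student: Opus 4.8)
The plan is to check, straight from the definition, the three things that make $(\G/F(0),E)$ a 2-cokernel of $F$: that $EF$ is a zero morphism, that every $T$ killing $F$ factors through $E$ up to isomorphism (clause (1)), and the 2-dimensional uniqueness clause (2). The one structural input doing all the work is that $F$, being finite-limit-preserving, preserves monomorphisms, so the image under $F$ of $0 \to 1$ exhibits $U := F(0)$ as a \emph{subterminal} object of $\G$. Hence any morphism of $\G$ between two objects admitting a map to $U$ automatically commutes with the (necessarily unique) structure maps, so the forgetful functor $E_!\colon \G/F(0) \to \G$ is fully faithful; consequently the unit $\eta\colon \Id_{\G/F(0)} \to E E_!$ of the adjunction $E_! \dashv E$ recalled above is a natural isomorphism, while the counit $\varepsilon\colon E_! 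E \to \Id_\G$ has components $\varepsilon_G = \pi_G\colon G\times F(0) \to G$. I expect this isomorphism $\Id_{\G/F(0)} \cong E E_!$ (equivalently: full faithfulness of $E_!$) to be the crux; once it is in hand, everything else is diagram-chasing.

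To see $EF$ is a zero morphism, I would show $EF(N) = (F(N)\times F(0),\,\pi_{F(0)})$ is terminal in $\G/F(0)$ for every $N$. In the topos $\Nvar$ the functor $-\times N$ is a left adjoint and so preserves the initial object, meaning the projection $N\times 0 \to 0$ is invertible; applying the product-preserving functor $F$ shows $\pi_{F(0)}\colon F(N)\times F(0)\to F(0)$ is invertible, and an object of $\G/F(0)$ whose structure map is invertible is terminal. On morphisms there is nothing to verify.

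For clause (1), let $T\colon \G\to\mathcal{X}$ be in $\FLTop$ with $TF$ a zero morphism, so in particular $T(F(0)) = TF(0)$ is terminal in $\mathcal{X}$. I would take $H := T\circ E_!\colon \G/F(0) \to \mathcal{X}$. This preserves pullbacks and equalisers because $E_!$ does and $T$ does, and it preserves the terminal object because $E_!$ sends the terminal object $\id_{F(0)}$ of $\G/F(0)$ to $F(0)$, which $T$ sends to a terminal object; hence $H\in\FLTop$. Finally $HE(G) = T(G\times F(0)) \cong T(G)\times T(F(0)) \cong T(G)$ with the composite isomorphism equal to $T(\varepsilon_G)$ and hence natural in $G$, so $HE \cong T$.

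For clause (2), given $H,H'\colon \G/F(0)\to\mathcal{X}$ and $\alpha\colon HE\to H'E$, I would set $\gamma := (H'\eta)^{-1}\circ(\alpha\ast E_!)\circ(H\eta)\colon H\to H'$, which makes sense because $\eta$ is invertible. Whiskering by $E$ and using the triangle identity $(E\varepsilon)\circ(\eta E) = \id_E$ (so $\eta_{E(G)} = E(\varepsilon_G)^{-1}$) together with the naturality of $\alpha$ on $\varepsilon_G\colon G\times F(0)\to G$, a short chase gives $\gamma E = \alpha$. For uniqueness, if $\gamma' E = \alpha$ as well, then naturality of $\gamma'$ along the isomorphism $\eta_{(Y,p)}\colon (Y,p)\xrightarrow{\;\cong\;}E E_!(Y,p)=E(Y)$ forces $\gamma'_{(Y,p)} = H'(\eta_{(Y,p)})^{-1}\circ\alpha_Y\circ H(\eta_{(Y,p)})$ for every object $(Y,p)$, so $\gamma' = \gamma$. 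This, together with the earlier remark that the anticipated obstacle is really just establishing $\Id_{\G/F(0)}\cong E E_!$ from subterminality of $F(0)$, completes the plan.
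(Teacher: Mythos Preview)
Your proof is correct and follows essentially the same structure as the paper's: show $EF$ is a zero morphism via $F(N)\times F(0)\cong F(N\times 0)\cong F(0)$, produce a factorisation through $E$ for clause~(1), and deduce clause~(2) from full faithfulness of an adjoint of $E$. The one genuine difference is the choice of adjoint. You factor $T$ as $TE_!$ and verify directly that $TE_!$ is lex (pullbacks because $E_!$ creates them, terminal object because $T(F(0))$ is terminal by hypothesis). The paper instead factors $T$ as $TE_*$; this makes the lex condition automatic since $E_*$ is a right adjoint, but it costs an extra step: the paper first proves $T\cong TE_!E$ exactly as you do, and then converts this to $T\cong TE_*E$ using $E\theta\colon E\xrightarrow{\sim}EE_*E$. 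Your route is thus slightly more direct for clause~(1) and avoids invoking the exponential structure of the topos, at the price of the small explicit lex check. For clause~(2) the paper appeals to $E_*$ being fully faithful while you use $E_!$; either works, as the paper itself remarks that full faithfulness of any adjoint of $E$ suffices.
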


\begin{proof}
    We know that $E$ lies in $\FLTop$ and so we begin by showing that that $EF$ is a zero morphism. The terminal object in $\G/F(0)$ is $F(0)$ and so consider the following calculation.
    \begin{align*}
        EF(N)   &= F(N) \times F(0) \\
                &\cong F(N \times 0) \\
                &\cong F(0).
    \end{align*}
    
    Next suppose that $T\colon \G \to \mathcal{X}$ is such that $TF$ is a zero morphism. We claim that $TE_*\colon \H \to \mathcal{X}$ when composed with $E$ is naturally isomorphic to $T$. 
     
    Observe that
    \begin{align*}
        T(G)        &\cong T(G) \times 1 \\
                    &\cong T(G) \times TF(0) \\
                    &\cong T(G \times F(0)) \\
                    &\cong T(E_!E(G))
    \end{align*}
    where each isomorphism is natural in $G$.
    Hence, $T \cong TE_!E \cong (TE_!E)E_*E \cong TE_*E$ where the central isomorphism comes from $E\theta\colon E \xrightarrow{\sim} EE_*E$.
    
    The final condition for the 2-cokernel holds immediately because $E$ has a full and faithful adjoint.
\end{proof}

Unfortunately, 2-kernels do not always exist in $\FLTop$. However, they do exist in the larger 2-category $\FLCat$ of finitely-complete categories and finite-limit-preserving functors.

\begin{proposition}\label{prp:kernelfullsubcat}
Let $F \colon \G \to \H$ be a morphism in $\FLCat$. The 2-kernel of $F$, which we write as $\Ker(F)$, is given by the inclusion into $\G$ of the full subcategory of objects sent by $F$ to a terminal object.
\end{proposition}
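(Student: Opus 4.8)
The plan is to verify directly that the full subcategory $\mathcal{K}\subseteq\G$ of objects sent by $F$ to a terminal object, together with its inclusion $j\colon\mathcal{K}\hookrightarrow\G$, is the $2$-cokernel of $F$ in $(\FLCat)\op$ — equivalently, the $2$-kernel of $F$ in $\FLCat$. First I would check that $\mathcal{K}$ is a finitely complete category and $j$ a finite-limit-preserving functor: since $F$ preserves finite limits and a finite limit of objects sent to terminal objects is again sent to a terminal object (the limit cone in $\H$ exhibits $F$ of the limit as a finite limit of terminal objects, hence terminal), $\mathcal{K}$ is closed under finite limits in $\G$, so the inclusion creates them and in particular preserves them. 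Next, $Fj$ is a zero morphism by construction, since every object of $\mathcal{K}$ lands in a terminal object of $\H$ and zero morphisms in $\FLCat$ are exactly the functors sending everything to a terminal object; note this uses the zero object of $\FLCat$, namely the terminal category.

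For the universal property (working in $\C\op$ with $\C=\FLCat$, so arrows reverse), suppose $T\colon\mathcal{X}\to\G$ is finite-limit-preserving with $FT$ a zero morphism, i.e. $FT(X)$ is terminal in $\H$ for every $X\in\mathcal{X}$. Then $T$ factors through $\mathcal{K}$ on objects, and since $\mathcal{K}$ is a \emph{full} subcategory it factors on morphisms as well, giving a functor $\widetilde{T}\colon\mathcal{X}\to\mathcal{K}$ with $j\widetilde{T}=T$ strictly — so in particular $j\widetilde{T}\cong T$, as required by condition (1). For condition (2), given $h,h'\colon\mathcal{X}\to\mathcal{K}$ and a natural transformation $\alpha\colon jh\to jh'$, each component $\alpha_X\colon jh(X)\to jh'(X)$ is a morphism between objects of $\mathcal{K}$, hence — by fullness again — is (uniquely, by faithfulness of $j$) a morphism in $\mathcal{K}$; these assemble into the unique $\gamma\colon h\to h'$ with $j\gamma=\alpha$. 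Finally, $\widetilde{T}$ preserves finite limits because $j$ does, $j$ creates them, and $T=j\widetilde{T}$ does.

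The only genuinely substantive point — and the one I would spell out carefully — is the closure of $\mathcal{K}$ under finite limits, since everything else is bookkeeping forced by fullness and faithfulness of the inclusion. Here I would argue pointwise: given a finite diagram $D$ in $\mathcal{K}$ with limit $L$ in $\G$, the object $F(L)$ is the limit in $\H$ of the diagram $FD$, all of whose values are terminal; the limit of the constant-terminal diagram is terminal, so $F(L)$ is terminal and $L\in\mathcal{K}$. (One should remark, as the excerpt's discussion of zero morphisms already implicitly does, that ``terminal object'' should be read up to isomorphism throughout, which causes no trouble since being terminal is invariant under isomorphism.) I do not expect any obstruction beyond this; the $2$-categorical uniqueness clause in the definition of $2$-cokernel is automatically satisfied here precisely because $j$ is fully faithful, which is the content of the remark following the definition of $2$-kernel.
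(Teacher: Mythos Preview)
Your proposal is correct and follows essentially the same approach as the paper's proof: verify closure of $\mathcal{K}$ under finite limits, observe that $Fj$ is a zero morphism, factor any $T$ with $FT$ zero through the full subcategory, and invoke full faithfulness of the inclusion for condition~(2). Your write-up is in fact more careful than the paper's (which is quite terse), in particular by explicitly checking that the factored functor $\widetilde{T}$ preserves finite limits.
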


\begin{proof}
Since $F$ preserves finite limits and sends each object in $\Ker(F)$ to a terminal object, it is clear that $\Ker(F)$ is closed under finite limits. Naturally, the inclusion is a finite-limit-preserving functor.

It is clear that $FK$ is a zero morphism. We must check that if $T\colon \mathcal{X} \to \G$ is such that $FT$ is a zero morphism, then it factors through $\Ker(F)$. Note that since $FT$ is a zero morphism, all objects (and morphisms) in its image lie in $\Ker(F)$. Thus, it is easy to see that $T$ factors through $\Ker(F)$.
The uniqueness condition of the universal property is immediate, as the inclusion of $\Ker(F)$ is full and faithful.
\end{proof}

We will only be concerned with 2-kernels of 2-cokernels. The following proposition shows that these do always exist in $\FLTop$.

\begin{proposition}\label{prop:kernelofcokernel}
    Let $U$ be a subterminal of a topos $\G$ and consider $E\colon \G \to \G/U$ defined as in \cref{prop:cokernelsexist}. Then the kernel of $E$ is given by $K \colon \G_{\mathfrak{c}(U)} \hookrightarrow \G$, the inclusion of the closed subtopos corresponding to $U$.
\end{proposition}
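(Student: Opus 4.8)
The plan is to reduce the statement to the two preceding propositions. By \cref{prp:kernelfullsubcat}, the 2-kernel of $E\colon \G \to \G/U$ computed in the larger 2-category $\FLCat$ is the inclusion into $\G$ of the full subcategory $\mathcal{K}$ of those objects $G$ that $E$ sends to a terminal object. Since the terminal object of $\G/U$ is $\id_U$ and $E(G)$ is the projection $\pi_U\colon G \times U \to U$ (the description of \cref{prop:cokernelsexist} with $F(0)$ taken to be $U$), membership $G \in \mathcal{K}$ amounts to the statement that $\pi_U\colon G \times U \to U$ is an isomorphism. So two things remain to be shown: first, that $\mathcal{K}$ is exactly the essential image of the closed subtopos inclusion $K\colon \G_{\mathfrak{c}(U)} \hookrightarrow \G$; and second, that the 2-kernel so obtained in $\FLCat$ is in fact a 2-kernel in $\FLTop$.

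I would dispose of the $\FLTop$ upgrade first, since it is formal. The functor $K$ is fully faithful (being the direct image of a geometric morphism) and finite-limit-preserving, and $\G_{\mathfrak{c}(U)}$ is a topos; so once $\mathcal{K}$ has been identified with $\G_{\mathfrak{c}(U)}$ along $K$, the universal property of the 2-kernel --- which \cref{prp:kernelfullsubcat} supplies against every finitely-complete category --- holds in particular when tested against toposes, and hence $K$ exhibits $\G_{\mathfrak{c}(U)}$ as the 2-kernel of $E$ in $\FLTop$.

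The identification of $\mathcal{K}$ with the essential image of $K$ is the real content. I would extract it from the pushout presentation of the reflector $K^*$ recalled in the background, together with the fact that $E = (-) \times U$ preserves pushouts (it has the right adjoint $E_*$, and indeed also a left adjoint $E_!$). If $\pi_U\colon G \times U \to U$ is invertible, then in the square defining $K^*(G)$ the coprojection $p_2^G\colon G \to K^*(G)$, being the cobase change of $\pi_U$ along $\pi_G$, is invertible; hence $G$ lies in the essential image of $K$. For the reverse inclusion it is enough, $K$ being fully faithful, to check that every $K(X)$ lies in $\mathcal{K}$. Here the coprojection $p_2^{K(X)}\colon K(X) \to K^*(K(X))$ is invertible because $K(X)$ already lies in the reflective subcategory $\G_{\mathfrak{c}(U)}$, so $E$ sends it to an isomorphism $E(K(X)) \to E(K^*(K(X)))$; on the other hand, applying the pushout-preserving functor $E$ to the square defining $K^*(K(X))$ produces a pushout square whose leg $E(\pi_{K(X)})\colon E(K(X) \times U) \to E(K(X))$ is an isomorphism --- both objects become $K(X) \times U \times U \cong K(X) \times U$, and as $U$ is subterminal the map is forced to the canonical one --- so $E(K^*(K(X)))$ is isomorphic to $E(U) = U \times U$, which is the terminal object of $\G/U$ since $U$ is subterminal. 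Combining the two, $E(K(X))$ is terminal, i.e.\ $K(X) \in \mathcal{K}$. Since $K$ induces an equivalence onto its essential image, this identifies $K$ with the inclusion $\mathcal{K} \hookrightarrow \G$ up to equivalence.

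The step I expect to be the genuine obstacle is this last identification --- essentially the assertion that the closed subtopos $\G_{\mathfrak{c}(U)}$, realised concretely as a full subcategory of $\G$, consists exactly of the objects ``trivial over $U$''. This is intuitively clear and is essentially folklore, but getting it cleanly out of the pushout definition of $K^*$ takes the somewhat delicate bookkeeping around $E$ carrying the relevant projection to an isomorphism. By comparison, the reduction to \cref{prp:kernelfullsubcat}, the cobase-change argument, and the passage from $\FLCat$ to $\FLTop$ are all routine.
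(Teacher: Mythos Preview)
Your proof is correct and follows the same overall plan as the paper: reduce to $\FLCat$ via \cref{prp:kernelfullsubcat} (using that $\FLTop$ is a full sub-2-category) and then identify the full subcategory $\mathcal{K}$ of objects killed by $E$ with the closed subtopos by exploiting the pushout description of $K^*$.

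The two proofs differ in how they handle the inclusion $\mathcal{K} \subseteq \G_{\mathfrak{c}(U)}$. The paper argues that $K^*$ fixes every $G \in \mathcal{K}$ by first observing that $U$ is initial in $\mathcal{K}$ (using $E_! \dashv E$) and then verifying by hand that the defining pushout square for $K^*(G)$ already has $G$ as its pushout. Your argument is more direct: membership in $\mathcal{K}$ says exactly that $\pi_U\colon G\times U\to U$ is invertible, and the unit $p_2^G\colon G\to K^*(G)$ is the cobase change of $\pi_U$ along $\pi_G$, hence itself invertible. This avoids the initial-object detour entirely. For the other inclusion both proofs apply $E$ to the defining pushout and use that $U$ is subterminal; the paper does this for arbitrary $G$ (obtaining $EK^*(G)\cong U$ directly), whereas you specialise to $G = K(X)$ and then invoke $K(X)\cong K^*K(X)$, which is a harmless extra step.
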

\begin{proof}
    Since $\FLTop$ is a full sub-2-category of $\FLCat$,
    it suffices to show that the closed subtopos $\G_{\mathfrak{c}(U)}$ is equivalent to $\Ker(E)$, the full subcategory of objects sent by $E$ to a terminal object.
    
    The reflector $K^*\colon \G \to \G_{\mathfrak{c}(U)}$ sends an object $G$ to the following pushout. 
    \begin{center}
      \begin{tikzpicture}[node distance=3.2cm, auto]
        \node (A) {$G \times U$};
        \node (B) [below of=A] {$G$};
        \node (C) [right of=A] {$U$};
        \node (D) [right of=B] {$K^*(G)$};
        \draw[->] (A) to node [swap] {$\pi_G$} (B);
        \draw[->] (A) to node {$\pi_U$} (C);
        \draw[->] (B) to node {} (D);
        \draw[->] (C) to node [swap] {} (D);
        \begin{scope}[shift=({D})]
            \draw +(-0.3,0.7) -- +(-0.7,0.7) -- +(-0.7,0.3);
        \end{scope}
      \end{tikzpicture}
    \end{center}
    First we show that $K^*(G)$ lies in $\Ker(E)$.
    We know that $E$ preserves colimits and so we obtain the following pushout in $\G/U$.
    
    \begin{center}
      \begin{tikzpicture}[node distance=3.2cm, auto]
        \node (A) {$G \times U$};
        \node (B) [below of=A] {$G \times U$};
        \node (C) [right of=A] {$U$};
        \node (D) [right of=B] {$EK^*(G)$};
        \draw[->] (A) to node [swap] {$\id_{G \times U}$} (B);
        \draw[->] (A) to node {$!$} (C);
        \draw[->] (B) to node {} (D);
        \draw[->] (C) to node {$p$} (D);
        \begin{scope}[shift=({D})]
            \draw +(-0.3,0.7) -- +(-0.7,0.7) -- +(-0.7,0.3);
        \end{scope}
      \end{tikzpicture}
    \end{center}
    But note that $p$ is an isomorphism, since it is the pushout of an identity morphism and thus $EK^*(G) \cong U$ and $K^*(G)$ lies in $\Ker(E)$.
    
    Finally, we must show that $K^*$ fixes the objects of $\Ker(E)$. First observe that $U$ is the initial object in $\Ker(E)$, since if $X$ is an object in $\Ker(E)$ then we have $\Hom_\G(U,X) = \Hom_\G(E_!(U),X) \cong \Hom_{\G/U}(U,E(X)) \cong \Hom_{\G/U}(U,U)$. There is precisely one morphism in $\Hom_{\G/U}(U,U)$, since $U$ is the terminal object in $\G/U$.
    
    Now consider the following candidate pushout diagram where $G$ lies in $\Ker(E)$.
    \begin{center}
      \begin{tikzpicture}[node distance=3.0cm, auto]
        \node (A) {$G \times U$};
        \node (B) [below of=A] {$G$};
        \node (C) [right of=A] {$U$};
        \node (D) [right of=B] {$G$};
        \draw[->] (A) to node [swap] {$\pi_G$} (B);
        \draw[->] (A) to node [swap] {$\pi_U$} (C);
        \draw[bend right, dashed, ->] (C) to node [swap] {$!_{G \times U}$} (A);
        \draw[->] (B) to node {$\id_G$} (D);
        \draw[->] (C) to node [swap] {$!_G$} (D);
        \node (X) [below right=1.2cm and 1.2cm of D.center] {$X$};
        \draw[bend right=15,->] (B) to node [swap] {$f$} (X);
        \draw[bend left=15,->] (C) to node {$g$} (X);
        \draw[dashed,->] (D) to node [swap,pos=0.4] {$h$} (X);
      \end{tikzpicture}
    \end{center}
    To see that the square commutes, note that by assumption $G$ lies in $\Ker(E)$ and so $G \times U \cong U$. Therefore, $G \times U$ is initial in $\Ker(E)$ and there is a unique map into $G$.
    
    Now suppose $(f,g)$ is a cocone in $\G$. It is clear that the candidate morphism $h\colon G \to X$ must equal $f$ and so we must just show that $f\circ{!}_G = g$.
    Since $G \times U$ and $U$ are both initial in $\Ker(E)$, $\pi_U$ has an inverse $!_{G\times U}\colon U \to G \times U$.
    
    We now have
    \begin{align*}
        f\circ{!}_G &= f !_G \pi_U !_{G\times U} \\
                    &= f \pi_G !_{G\times U} \\
                    &= g \pi_U !_{G\times U} \\
                    &= g.
    \end{align*}
    
    This gives that $G$ is the pushout and hence fixed by $K^*$.
\end{proof}

\subsection{Adjoint extensions and Artin glueings}\label{sec:adjoint_extensions}

We are now in a position to define our main object of study: adjoint split extensions.

\begin{definition}
    A diagram in $\FLTop$ of the form
    \[
        \splitext{\Nvar}{K}{\G}{E}{E_*}{\H}
    \]
    equipped with a natural isomorphism $\epsilon\colon E E_* \to \Id_\H$
    is called an \emph{adjoint split extension} if $K$ is the 2-kernel of $E$, $E$ is the 2-cokernel of $K$, $E_*$ is the right adjoint of $E$
    and $\epsilon$ is the counit of the adjunction.
\end{definition}

\begin{remark}
  \Cref{prop:cokernelsexist,prop:kernelofcokernel} suggest that every adjoint split extension is equivalent to an extension arising from a closed subtopos and its open complement (in a sense that will be made precise in \cref{def:morphism_of_extensions}).
  Here $U = K(0)$ is a subterminal in $\G$.
  \begin{center}
   \begin{tikzpicture}[node distance=2.7cm, auto]
    \node (A) {$\Nvar$};
    \node (B) [right of=A] {$\G$};
    \node (C) [right of=B] {$\H$};
    \node (D) [below of=A] {$\G_{\mathfrak{c}(U)}$};
    \node (E) [right of=D] {$\G$};
    \node (F) [right of=E] {$\G/U$};
    \draw[->] (A) to node {$K$} (B);
    \draw[transform canvas={yshift=0.5ex},->] (B) to node {$E$} (C);
    \draw[transform canvas={yshift=-0.5ex},->] (C) to node {$E_*$} (B);
    \draw[right hook->] (D) to (E);
    \draw[transform canvas={yshift=0.5ex},->] (E) to node {$(-)\times U$} (F);
    \draw[transform canvas={yshift=-0.5ex},->] (F) to node {$(-)^U$} (E);
    \draw[double equal sign distance] (B) to (E);
    \draw[->] (A) to node [swap, sloped] {$\sim$} (D);
    \draw[->] (C) to node [sloped] {$\sim$} (F);
   \end{tikzpicture}
  \end{center}
\end{remark}

The above situation is precisely the setting in which Artin glueings are studied and it is well known that in this case $\G$ is equivalent to an Artin glueing $\Gl(K^*E_*)$.
We will present an alternative proof of this result from the perspective of extensions.

We begin by showing that Artin glueings can be viewed as adjoint split extensions in a natural way.

\begin{proposition}\label{prop:glueingisadjoint}
Let $F\colon \H \to \Nvar$ be a finite-limit-preserving functor. Then the diagram \[\splitext{\Nvar}{\pi_{1*}}{\Gl(F)}{\pi_2}{\pi_{2*}}{\H}\]
is an adjoint split extension in $\FLTop$.
\end{proposition}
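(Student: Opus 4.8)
The plan is to verify directly the four conditions in the definition of an adjoint split extension, leaning on the descriptions of 2-kernels and 2-cokernels already obtained. To begin with, everything lies in $\FLTop$: $\Gl(F)$ is a topos, $\pi_2$ is finite-limit-preserving (both recalled above), and $\pi_{1*}$, $\pi_{2*}$ are the right adjoints of $\pi_1$, $\pi_2$, so they preserve all limits, finite ones included. From the formula $\pi_{2*}(H) = (F(H),H,\id_{F(H)})$ one reads off $\pi_2\pi_{2*} = \Id_\H$, so the counit $\epsilon$ of $\pi_2 \dashv \pi_{2*}$ is a natural isomorphism; equivalently this holds because $\pi_{2*}$, being the direct image of an open subtopos inclusion, is full and faithful. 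This settles the ``right adjoint'' and ``counit'' clauses, leaving only the identification of the 2-kernel and the 2-cokernel.

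For the 2-kernel I would invoke \cref{prp:kernelfullsubcat}: $\Ker(\pi_2)$ is the inclusion into $\Gl(F)$ of the full subcategory on those $(N,H,\ell)$ for which $\pi_2(N,H,\ell) = H$ is terminal. Since $F$ preserves the terminal object, $F(H) \cong 1$ for such objects, so $\ell$ must be the unique map $N \to 1$; the full subcategory is therefore equivalent to $\Nvar$ via $(N,1,!) \mapsto N$, and under this equivalence the inclusion is exactly $\pi_{1*}$. As the subcategory is (equivalent to) a topos, it likewise computes the 2-kernel in $\FLTop$, the $\FLTop$ universal property simply having fewer test objects. Hence $\pi_{1*}$ satisfies the universal property of the 2-kernel of $\pi_2$.

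For the 2-cokernel, note first that $\pi_2\pi_{1*}$ sends every object of $\Nvar$ to the terminal object of $\H$, so it is a zero morphism. By \cref{prop:cokernelsexist} the 2-cokernel of $\pi_{1*}$ is $E\colon \Gl(F) \to \Gl(F)/U$, where $U = \pi_{1*}(0) = (0,1,!)$ is a subterminal of $\Gl(F)$ and $E(X) = X\times U$. Viewing $\Gl(F)/U$ as the full subcategory of objects admitting a map to $U$, a morphism $(N,H,\ell) \to (0,1,!)$ forces $N \cong 0$ (the initial object being strict), and then $\ell$ is forced too, so $\Gl(F)/U$ is equivalent to $\H$ via $(0,H,!) \mapsto H$. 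The crux is the computation $E(X) = X\times U \cong (0,\pi_2(X),!)$, natural in $X$, which shows that $E$ corresponds to $\pi_2$ under this equivalence; since a 2-cokernel postcomposed with an equivalence is again a 2-cokernel, $\pi_2$ satisfies the universal property of the 2-cokernel of $\pi_{1*}$. Together with $\epsilon$ this exhibits the diagram as an adjoint split extension.

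I expect the only genuinely nontrivial ingredient to be the product computation $X\times U \cong (0,\pi_2(X),!)$ and the surrounding bookkeeping. If one prefers to avoid the slice description and the stability-under-equivalence step, the 2-cokernel universal property can instead be verified in the style of \cref{prop:cokernelsexist}: writing $L\colon \H \to \Gl(F)$ for $H \mapsto (0,H,!)$, the same computation gives $X\times U \cong L\pi_2(X)$, so for any $T$ with $T\pi_{1*}$ a zero morphism one gets $T(X) \cong T(X)\times 1 \cong T(X)\times T(U) \cong T(X\times U) \cong TL\pi_2(X)$, naturally, using that $T$ preserves finite products and $T(U) \cong 1$; whiskering the isomorphism $\pi_2\eta\colon \pi_2 \xrightarrow{\sim} \pi_2\pi_{2*}\pi_2$ (invertible since $\pi_{2*}$ is full and faithful) and reusing this identity once more yields $T \cong TL\pi_2 \cong TL\pi_2\pi_{2*}\pi_2 \cong T\pi_{2*}\pi_2$, so $T\pi_{2*}$ gives the required factorisation, while the uniqueness clause is immediate because $\pi_2$ has a full and faithful adjoint.
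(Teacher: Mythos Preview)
Your proof is correct and takes essentially the same approach as the paper: both invoke \cref{prp:kernelfullsubcat} for the 2-kernel and identify the slice $\Gl(F)/(0,1,!)$ with $\H$ to see that the 2-cokernel furnished by \cref{prop:cokernelsexist} agrees with $\pi_2$. You are somewhat more explicit (spelling out the product computation $X \times U \cong (0,\pi_2(X),!)$ and the adjoint/counit clauses, and offering an alternative direct verification), but the core argument is the same.
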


\begin{proof}
  We first note that $\Gl(F)$ is a topos. This is a fundamental result in the theory of Artin glueings of toposes and a proof can be found in \cite{wraith1974glueing}.
  
  By \cref{prp:kernelfullsubcat}, it is immediate that $\pi_{1*}$ is the 2-kernel of $\pi_2$.
  To see that $\pi_2$ is the 2-cokernel of $\pi_{1*}$, we first observe that the slice category of $\Gl(F)$ by the subterminal $(0,1,!) = \pi_{1*}(0)$ is equivalent to $\H$. The objects of $\Gl(F)/(0,1,!)$ are isomorphic to those the form $(0,H,!)$ (since every morphism into an initial object in $\Nvar$ is an isomorphism) and its morphisms of the form $(!,f)$. If $L\colon \Gl(F)/(0,1,!) \to \H$ is this isomorphism sending $(0,H,!)$ to $H$ and $(!,f)$ to $f$ and $E\colon \Gl(F) \to \Gl(F)/(0,1,!)$ is the cokernel map, then it is clear that $LE \cong \pi_2$.
\end{proof}

The following proposition is shown for Grothendieck toposes in \cite{sga4vol1}, but deserves to be more well known. Here we prove it for general elementary toposes. (It also follows easily from the theory of Artin glueings, but here we will use it to develop that theory.)

Recall that we use $\theta$ for the unit of the open subtopos adjunction $E \dashv E_*$ and $\zeta$ for the unit of the closed subtopos adjunction $K^* \dashv K$.

\begin{proposition}\label{prop:pullback_representation_objects}
 Let $\G$ be a topos and consider an open subtopos $E_*\colon \H \hookrightarrow \G$ with closed complement $K\colon \Nvar \hookrightarrow \G$.
 Then each object $G$ in $\G$ can be expressed as the following pullback in $\G$ of objects from $\Nvar$ and $\H$.
 \begin{center}
  \begin{tikzpicture}[node distance=3.5cm, auto]
    \node (A) {$G$};
    \node (B) [below of=A] {$K K^*(G)$};
    \node (C) [right of=A] {$E_*E(G)$};
    \node (D) [right of=B] {$K K^*E_*E(G)$};
    \draw[->] (A) to node [swap] {$\zeta_G$} (B);
    \draw[->] (A) to node {$\theta_G$} (C);
    \draw[->] (B) to node [swap] {$K K^*\theta_G$} (D);
    \draw[->] (C) to node {$\zeta_{E_*E(G)}$} (D);
    \begin{scope}[shift=({A})]
        \draw +(0.25,-0.75) -- +(0.75,-0.75) -- +(0.75,-0.25);
    \end{scope}
  \end{tikzpicture}
 \end{center}
\end{proposition}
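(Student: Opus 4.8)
The plan is to verify the pullback property by applying the two inverse-image functors $E = (-)\times U \colon \G \to \H$ and $K^* \colon \G \to \Nvar$ and using that these jointly reflect isomorphisms. First note that the square commutes: naturality of the unit $\zeta$ of $K^* \dashv K$ applied to $\theta_G \colon G \to E_*E(G)$ yields precisely $KK^*(\theta_G)\circ\zeta_G = \zeta_{E_*E(G)}\circ\theta_G$. Now let $P$ denote the pullback of the cospan actually formed in $\G$, with comparison $c \colon G \to P$. Since $E$ and $K^*$ are inverse images of geometric morphisms they preserve finite limits, so $E(c)$ (resp.\ $K^*(c)$) is the comparison map from $E(G)$ (resp.\ $K^*(G)$) into the pullback of the $E$-image (resp.\ $K^*$-image) cospan. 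Hence it suffices to show each image square is a pullback and then invoke joint conservativity of $(E,K^*)$.

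For the $E$-image: since $K$ is the $2$-kernel of $E$ (\cref{prop:kernelofcokernel}), the composite $EK$ is a zero morphism, so $EKK^*(G)$ and $EKK^*E_*E(G)$ are both terminal. The $E$-image cospan is therefore of the form $1 \to 1 \leftarrow EE_*E(G)$, and the $E$-image square is a pullback exactly when $E(\theta_G)\colon E(G) \to EE_*E(G)$ is invertible. This follows from the triangle identity $\epsilon_{E(G)}\circ E(\theta_G) = \id_{E(G)}$, as $\epsilon$ is invertible because $E_*$ is fully faithful.

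For the $K^*$-image: the two vertical maps of the square become $K^*(\zeta_G)$ and $K^*(\zeta_{E_*E(G)})$, and the triangle identity $\delta_{K^*X}\circ K^*(\zeta_X) = \id_{K^*X}$ together with invertibility of $\delta$ (since $K$ is fully faithful) shows both are isomorphisms. A commuting square whose two parallel edges are isomorphisms is automatically a pullback, so the $K^*$-image square is one.

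It remains to know that $E$ and $K^*$ jointly reflect isomorphisms; this is the one genuinely non-formal ingredient, and it is the standard expression of the fact that the open subtopos $\mathfrak{o}(U)$ and the closed subtopos $\mathfrak{c}(U)$ are complementary (traceable to the identity $(U \Rightarrow \varphi)\wedge(\varphi\vee U) = \varphi$ holding internally in $\Omega$, which says $\mathfrak{o}(U)\vee\mathfrak{c}(U)$ is the improper subtopos). Granting this, $c$ is invertible because $E(c)$ and $K^*(c)$ are, and the square is a pullback. I expect this joint conservativity — more precisely, the choice of whether to cite it from the theory of complementary subtoposes or to include a short direct argument — to be the only real decision point; everything else is bookkeeping with the adjunctions $E \dashv E_*$ and $K^* \dashv K$.
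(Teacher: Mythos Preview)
Your argument is correct, but it takes a genuinely different route from the paper's proof. The paper computes the pullback $P$ explicitly in the internal logic: it rewrites $E_*E(G)\cong G^U$ and $KK^*(G)\cong G+_{G\times U}U$, writes down $P$ as a subquotient of $G^U\times(G+U)$, and then exhibits mutually inverse maps $r\colon G\to P$ and $s\colon P\to G$ by hand. Your proof instead reduces everything to the joint conservativity of $(E,K^*)$ via two easy applications of the triangle identities.

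What each approach buys: the paper's computation is fully self-contained and never appeals to any prior theory of complementary subtoposes---indeed, the paper is using this proposition as the engine that drives the glueing equivalence (\cref{thm:glueingequivalence}), so it wants an argument that stands on its own. Your approach is cleaner and more conceptual, but it imports the fact that $\mathfrak o(U)\vee\mathfrak c(U)=\G$; you correctly identify the Heyting identity $(U\Rightarrow\varphi)\wedge(\varphi\vee U)=\varphi$ as the underlying reason, and since that identity is provable at the level of Lawvere--Tierney topologies (hence independent of any glueing statement), there is no circularity. One small point worth making explicit if you keep this route: passing from ``$j_{\mathfrak o}\wedge j_{\mathfrak c}=\id$ on $\Omega$'' to ``$E$ and $K^*$ jointly reflect isomorphisms'' requires knowing that the class of $j$-local isomorphisms is detected by $j$-density of the image and the diagonal, so that it intersects correctly under meets of topologies---this is standard, but it is the step where the argument leaves the purely formal adjunction bookkeeping.
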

\begin{proof}
First note that the diagram commutes by the naturality of $\zeta$.
Recall that, setting $U = KK^*(0) \cong K(0)$,
we have $E_*E(G) = (G \times U)^U \cong G^U$
and that $KK^*(G)$ is the pushout of $G$ and $U$ along the projections $\pi_1\colon G \times U \to G$ and $\pi_2\colon G \times U \to U$.
Now we can rewrite the relevant pullback diagram as follows.
\begin{center}
  \begin{tikzpicture}[node distance=3.3cm, auto]
    \node (A) {$P$};
    \node (B) [below of=A] {$G +_{G \times U} U$};
    \node (C) [right=3.2 of A] {$G^U$};
    \node (D) [below of=C] {$G^U +_{G^U \times U} U$};
    \draw[->] (A) to node [swap] {} (B);
    \draw[->] (A) to node {} (C);
    \draw[->] (B) to node [swap] {$\begin{pmatrix}\iota_{G^U} \circ c \\ \iota_U\end{pmatrix}$} (D);
    \draw[->] (C) to node {$\iota_{G^U}$} (D);
    
    \begin{scope}[shift=({A})]
        \draw +(0.25,-0.75) -- +(0.75,-0.75) -- +(0.75,-0.25);
    \end{scope}
    
    \node (X) [above left=1.2cm and 1.2cm of A.center] {$G$};
    \draw[out=-90,->] (X) to node [swap] {$\iota_G$} (B);
    \draw[out=0,->] (X) to node {$c$} (C);
    \draw[->, pos=0.55, dashed] (X) to node {$r$} (A);
  \end{tikzpicture}
\end{center}
Here the $\iota$ maps are injections into the pushout and $c$ is the unit of the exponential adjunction, which intuitively maps elements of $G$ to their associated constant functions.

Let us express $P$ in the internal logic.
We have \[P = \{(f,[g]) \mid f \sim c(g) \} \cup \{(f,[*]) \mid f \sim {*} \} \subseteq G^U \times (G+U)/\sim\] where $\sim$ denotes the equivalence relation generated by $f \sim {*}$ for ${*} \in U$. Explicitly, we find that $f \sim f' \iff f = f' \lor {*} \in U$. 
Thus, we find
\begin{align*}
    P &= \{(f,[g]) \mid f = c(g) \lor {*} \in U \} \cup \{(f,[*]) \mid f \in G^U,\, {*} \in U \} \\
      &= \{(c(g),[g]) \mid g \in G \} \cup \{(f,[g]) \mid {*} \in U \} \cup \{(f,[*]) \mid {*} \in U \}.
\end{align*}
Now observe that if ${*} \in U$, then $[g] = [*]$ and hence $\{(f,[g]) \mid {*} \in U \} \subseteq \{(f,[*]) \mid {*} \in U \}$.
Finally, commuting the subobject and the quotient we arrive at
\[P = \left(\{(c(g),g) \mid g \in G \} \sqcup \{(f,*) \mid {*} \in U \}\right)/\sim\]
where the equivalence relation is generated by $(f,g) \sim (f,{*})$ for ${*} \in U$.
Note that the union is now disjoint.

The map $r\colon G \to P$ sends $g \in G$ to $[(c(g), g)]$.
We can define a candidate inverse by $s\colon P \to G$ by $(f,g) \mapsto g$ and $(f,{*}) \mapsto f({*})$,
which can seen to be well-defined, since if $(f,g)$ and $(f',{*})$ are elements of the disjoint union with $f = f'$ and ${*} \in U$, then $f'({*}) = c(g)({*}) = g$.

We clearly have $sr = \id_G$. Now if $[(c(g),g)] \in P$ then
$rs([(c(g),g)]) = r(g) = [(c(g), g)]$. On the other hand, if $[(f,{*})] \in P$ then $* \in U$ and $rs([f,{*}]) = r(f({*})) = [(c(f({*})), {*})]$, which equals $[(f,{*})]$ since $f({*}) = c(f({*}))(*)$ so that $f = c(f({*}))$. Thus, $r$ and $s$ are inverses as required.
\end{proof}

\begin{remark}
 The non-classical logic in the above proof can be hard to make sense of. It can help to consider the cases where $U = 0$ and $U = 1$. In the former case, $G^U$ contains no information and $G +_{G \times U} U \cong G$, while in the latter case the opposite is true. The general case `interpolates' between these.
\end{remark}

The next result will play a central role in this paper.

\begin{proposition}\label{prop:pullback_representation_functors}
 Let $\G$ be a topos and consider an open subtopos $E_*\colon \H \hookrightarrow \G$ with closed complement $K\colon \Nvar \hookrightarrow \G$.
 We have the following pullback in the category of (finite-limit-preserving) endofunctors on $\G$.
 \begin{center}
  \begin{tikzpicture}[node distance=3.0cm, auto]
    \node (A) {$\Id_\G$};
    \node (B) [below of=A] {$K K^*$};
    \node (C) [right of=A] {$E_* E$};
    \node (D) [right of=B] {$K K^* E_* E$};
    \draw[->] (A) to node [swap] {$\zeta$} (B);
    \draw[->] (A) to node {$\theta$} (C);
    \draw[->] (B) to node [swap] {$K K^* \theta$} (D);
    \draw[->] (C) to node {$\zeta E_* E$} (D);
    \begin{scope}[shift=({A})]
        \draw +(0.25,-0.75) -- +(0.75,-0.75) -- +(0.75,-0.25);
    \end{scope}
  \end{tikzpicture}
 \end{center}
\end{proposition}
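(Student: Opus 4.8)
The plan is to reduce the functor-level pullback square to the object-level pullback square that was just proven in \cref{prop:pullback_representation_objects}, by invoking the fact that a square of natural transformations between finite-limit-preserving endofunctors on $\G$ is a pullback precisely when it is a pointwise pullback --- that is, when evaluated at each object $G \in \G$ it gives a pullback in $\G$.

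First I would observe that the category of finite-limit-preserving endofunctors on $\G$, with natural transformations as morphisms, has finite limits computed pointwise: given a diagram of such functors, the pointwise limit is again finite-limit-preserving (since limits commute with limits), and it clearly satisfies the universal property in the functor category. In particular, a commuting square of natural transformations is a pullback in this functor category if and only if each of its components at an object $G$ is a pullback in $\G$. Next I would note that the square in the statement is exactly the square whose component at $G$ is the square appearing in \cref{prop:pullback_representation_objects}: the top-left corner $\Id_\G$ evaluates to $G$, the functor $KK^*$ evaluates to $KK^*(G)$, the functor $E_*E$ evaluates to $E_*E(G)$, the functor $KK^*E_*E$ evaluates to $KK^*E_*E(G)$, and the four natural transformations $\zeta$, $\theta$, $KK^*\theta$, $\zeta E_*E$ have components $\zeta_G$, $\theta_G$, $KK^*\theta_G$, $\zeta_{E_*E(G)}$ respectively. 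Since \cref{prop:pullback_representation_objects} shows this component square is a pullback for every $G$, the functor square is a pointwise pullback and hence a pullback in the functor category.

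The one genuinely non-trivial point --- and the main obstacle --- is verifying that all four objects in the square really are finite-limit-preserving endofunctors, so that the square lives in the stated category. The functors $E$, $E_*$, $K^*$, $K$ are all finite-limit-preserving ($E$ and $K^*$ because they are inverse images of geometric morphisms, $E_*$ and $K$ because they are right adjoints), so their composites $\Id_\G$, $KK^*$, $E_*E$, $KK^*E_*E$ are too. I would also want to remark that the naturality of the square as a square of natural transformations was already checked at the object level in the proof of \cref{prop:pullback_representation_objects} (it commutes by naturality of $\zeta$), and naturality in $G$ of the whole square is automatic once we know each corner is a natural transformation. With these observations in place the proof is essentially a one-line appeal to \cref{prop:pullback_representation_objects} together with pointwise computation of limits in a functor category.
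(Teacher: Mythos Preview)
Your proposal is correct and takes essentially the same approach as the paper: both reduce to \cref{prop:pullback_representation_objects} via the fact that pullbacks in the functor category are computed pointwise. The paper additionally draws out a commutative cube to verify explicitly that for a morphism $p\colon G \to G'$ the map $p$ itself is the one induced by the pullback universal property, but this is exactly the content of ``pointwise limits in a functor category detect limit cones,'' which you invoke directly; your version is a legitimate streamlining of the same argument.
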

\begin{proof}
We proved this on objects in \cref{prop:pullback_representation_objects}.
Now consider a morphism $p\colon G \to G'$ in $\G$. We obtain the following commutative cube.
\begin{center}
  \begin{tikzpicture}[node distance=4.5cm, auto, on grid,
    cross line/.style={preaction={draw=white, -,line width=6pt}}
    ]
    
    \node (A) {$G'$};
    \node [right of=A] (B) {$E_*E(G')$};
    \node [below of=A] (C) {$KK^*(G')$};
    \node [right of=C] (D) {$KK^*E_*E(G')$};
    
    \begin{scope}[shift=({A})]
        \draw [draw=black] +(0.25,-0.75) coordinate (X) -- +(0.75,-0.75) coordinate (Y) -- +(0.75,-0.25) coordinate (Z);
    \end{scope}
    
    \node [above left=1.4cm and 1.9cm of A.center] (A1) {$G$};
    \node [right of=A1] (B1) {$E_*E(G)$};
    \node [below of=A1] (C1) {$KK^*(G)$};
    \node [right of=C1] (D1) {$KK^*E_*E(G)$};
    
    \begin{scope}[shift=({A1})]
        \draw [draw=black!70] +(0.22,-0.78) coordinate (X1) -- +(0.78,-0.78) coordinate (Y1) -- +(0.78,-0.22) coordinate (Z1);
    \end{scope}
    
    \draw[->] (A1) to node {$\theta_G$} (B1);
    \draw[->] (A1) to node [swap] {$\zeta_G$} (C1);
    \draw[->] (B1) to node [pos=0.65] {$\zeta_{E_*E(G)}$} (D1);
    \draw[->] (C1) to node [node on layer=over,fill=white,inner sep=2pt,outer sep=0pt,yshift=1pt] (K) {\small  $KK^*\theta_G$} (D1);
    
    \draw[cross line, ->] (A) to node [pos=0.35] {$\theta_{G'}$} (B);
    \draw[cross line, ->] (A) to node [swap, pos=0.25] {$\zeta_{G'}$} (C);
    \draw[cross line, ->] (B) to node {$\zeta_{E_*E(G')}$} (D);
    \draw[cross line, ->] (C) to node [swap] {\small $KK^*\theta_{G'}$} (D);
    
    \begin{scope}
    \begin{pgfonlayer}{over}
    \clip (K.south west) rectangle (K.north east);
    \draw[->, opacity=0.7] (A) to (C);
    \end{pgfonlayer}
    \end{scope}
    
    \draw[->] (A1) to node [yshift=-2pt] {$p$} (A);
    \draw[->] (B1) to node [xshift=-1pt, yshift=-3pt] {\small $E_*E(p)$} (B);
    \draw[->] (C1) to node [swap, yshift=2pt] {\small $KK^*(p)$} (C);
    \draw[->] (D1) to node [swap, pos=0.2, xshift=5pt] {\small $KK^*E_*E(p)$} (D);
   \end{tikzpicture}
 \end{center}
 An enjoyable diagram chase around the cube shows that $p$ is the unique morphism making the cube commute by the pullback property of the front face.
 Since pullbacks in the functor category are computed pointwise, this yields the desired result.
\end{proof}

\begin{remark}
It is remarked in \cite{faul2019artin} that adjoint extensions of frames can be viewed as weakly Schreier split extensions of monoids as defined in \cite{bourn2015partialMaltsev}. \Cref{prop:pullback_representation_objects,prop:pullback_representation_functors} can be viewed as a categorified version of the weakly Schreier condition for the topos setting, though it is as yet unclear how the weakly Schreier condition might be categorified more generally. It would also be interesting to see how the general theory of ($\mathcal{S}$-)protomodular categories \cite{borceux2004protomodular,bourn2015Sprotomodular} might be categorified. Another potential example of 2-dimensional protomodularity can be found in \cite{kasangian2006split}.
\end{remark}

We can now prove the main result of this section.
For now we will treat equivalences of adjoint extensions without worrying too much about coherence, which we will discuss in more detail when we define morphisms of extensions in \cref{sec:extension_category}.
\begin{theorem}\label{thm:glueingequivalence}
  Let $\splitext{\Nvar}{K}{\G}{E}{E_*}{\H}$ be an adjoint split extension. Then it is equivalent to the split extension $\splitext{\Nvar}{\pi_{1*}}{\Gl(K^*E_*)}{\pi_2}{\pi_{2*}}{\H}$.
\end{theorem}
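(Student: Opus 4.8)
The plan is to build an equivalence $\Phi\colon \G \to \Gl(K^*E_*)$ directly and check it is compatible with the extension data. Write $F = K^*E_*\colon \H \to \Nvar$. The candidate functor sends an object $G \in \G$ to the triple $(K^*(G),\, E(G),\, \lambda_G)$, where $\lambda_G\colon K^*(G) \to F(E(G)) = K^*E_*E(G)$ is $K^*\theta_G$, with $\theta$ the unit of $E \dashv E_*$. On a morphism $p\colon G \to G'$ we send $p$ to the pair $(K^*(p), E(p))$; this is a morphism in $\Gl(F)$ precisely because $\theta$ is natural, so the relevant square commutes after applying $K^*$. The first main step is to verify $\Phi$ is an equivalence of categories. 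Essential surjectivity and full faithfulness should both follow from \cref{prop:pullback_representation_functors}: that proposition says $\Id_\G$ is the pullback of $KK^*$ and $E_*E$ over $KK^*E_*E$, and since $K$ and $E_*$ are fully faithful (being the 2-kernel inclusion and the open-subtopos inclusion), this exhibits each $G$ — functorially in $G$ — as the pullback in $\G$ of the data $(KK^*(G), E_*E(G), KK^*\theta_G)$. Transporting along the equivalences $\Nvar \simeq \G_{\mathfrak{c}(U)}$ and $\H \simeq \G/U$ (with $U = K(0)$), and using that $\Gl(F)$ is \emph{by construction} the category of such pullback data, gives an equivalence $\G \simeq \Gl(F)$ sending $G$ to $(K^*(G), E(G), K^*\theta_G)$ up to coherent isomorphism.

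Next I would check that this equivalence respects the extension structure, i.e.\ that the diagram
\[
  \splitext{\Nvar}{K}{\G}{E}{E_*}{\H}
\]
is carried to $\splitext{\Nvar}{\pi_{1*}}{\Gl(F)}{\pi_2}{\pi_{2*}}{\H}$. Concretely: (i) $\pi_2 \Phi \cong E$, which is immediate from the definition of $\Phi$ since $\pi_2(K^*(G),E(G),K^*\theta_G) = E(G)$; (ii) $\Phi E_* \cong \pi_{2*}$ on $\H$, which amounts to computing $\Phi E_*(H) = (K^*E_*(H),\, EE_*(H),\, K^*\theta_{E_*H})$ and comparing with $\pi_{2*}(H) = (F(H), H, \id_{F(H)}) = (K^*E_*(H), H, \id)$; here one uses the counit $\epsilon\colon EE_* \xrightarrow{\sim} \Id_\H$ and the triangle identity, which tells us $K^*\theta_{E_*H}$ is the appropriate isomorphism identifying the two triples; (iii) $\Phi K \cong \pi_{1*}$ on $\Nvar$: compute $\Phi K(N) = (K^*K(N),\, EK(N),\, K^*\theta_{KN})$ and compare with $\pi_{1*}(N) = (N, 1, !)$. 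For this we use that $K$ is fully faithful so $K^*K \cong \Id_\Nvar$, that $EK$ is a zero morphism (since $K$ is the 2-kernel of $E$) so $EK(N) \cong 1$, and that there is a unique morphism $K^*K(N) \to F(1)$ since $F(1) \cong K^*E_*(1) \cong K^*(1)$ is terminal in $\Nvar$ — wait, more carefully, $F(1) = K^*E_*(1)$ and $E_*(1) = 1^U = 1$, so $F(1) = K^*(1) \cong 1$, hence the structure map is forced, matching $!$. Finally one checks the splitting isomorphisms match up: the counit of $\pi_2 \dashv \pi_{2*}$ is the identity, and under the equivalence this corresponds to $\epsilon$ via the triangle identities.

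I expect the main obstacle to be step (i)–(iii) done \emph{coherently} rather than merely objectwise — that is, producing natural isomorphisms $\pi_2\Phi \cong E$, $\Phi E_* \cong \pi_{2*}$, $\Phi K \cong \pi_{1*}$ that are compatible with the counits and with the identifications of $K$ as 2-kernel and $E$ as 2-cokernel. Since the ambient 2-category is $\FLTop$ and all the functors in sight preserve finite limits, and since, as the excerpt notes, 2-kernels of 2-cokernels are pinned down up to canonical equivalence, these coherences are forced; but assembling them requires care with the units $\theta$, $\zeta$ and the counit $\epsilon$. As the paragraph preceding the theorem explicitly says we may defer full coherence to \cref{sec:extension_category}, I would at this stage present the equivalence and the three structural compatibilities at the level of (natural-in-the-relevant-variable) isomorphisms, invoke \cref{prop:pullback_representation_functors} for the core equivalence, and leave the bookkeeping of the 2-cells for the later treatment of morphisms of extensions.
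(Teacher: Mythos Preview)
Your proposal is correct and follows essentially the same approach as the paper: the same functor $\Phi(G) = (K^*(G), E(G), K^*\theta_G)$, the same structural isomorphisms for (i)--(iii), and the same reliance on \cref{prop:pullback_representation_functors} as the engine for the equivalence. The only minor difference is that the paper constructs an explicit inverse $\Phi'$ as a pullback in the functor category and checks both composites are identities via \cref{prop:pullback_representation_functors}, whereas you argue essential surjectivity and full faithfulness directly; these are equivalent strategies using the same key input.
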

\begin{proof}
We denote the unit of $\pi_2 \dashv \pi_{2*}$ by $\theta'\colon \Id_{\Gl(K^*E_*)} \to \pi_{2*}\pi_2$ and observe that its component at $(N,H,\ell)$ can be given explicitly by $(\ell,\id_{H})\colon (N,H,\ell) \to (K^*E_*(H),H,\id_H)$.
We denote the unit of $\pi_1 \dashv \pi_{1*}$ by $\zeta'\colon \Id_{\Gl(K^*E_*)} \to \pi_{1*}\pi_1$ and its components are given by $(\id_N,!)\colon (N,H,\ell) \to (N,1,!)$.

Consider the functor $\Phi\colon \G \to \Gl(K^*E_*)$ which sends objects $G$ to the triple $(K^*(G),E(G),K^*\theta_G)$ and morphisms $f\colon G \to G'$ to $(K^*(f),E(f))$. This is a morphism of extensions in the sense that we have natural isomorphisms $\alpha\colon \pi_{1*} \to \Phi K$, $\beta\colon \pi_2 \Phi \to E$ and $\gamma\colon \pi_{2*} \to \Phi E_*$
given by $\alpha = (\id,!) \circ \pi_{1*}\delta\inv$ (where $\delta\colon K^* K \to \Id_\Nvar$ is the counit of $K^* \dashv K$ and $(\id,!)\colon \pi_{1*} K^* K \cong \Phi K$ is simply to ensure the different choices of terminal object agree),
$\beta = \id$ (using $\pi_2 \Phi = E$) and $\gamma = (\id_{K^*E_*}, \epsilon\inv)$ (where $\epsilon\colon EE_* \to \Id_\H$ is the counit of $E \dashv E_*$ and this is a morphism in $\Gl(K^*E_*)$ by the triangle identity). This can be seen to be a morphism of extensions as defined in \cref{def:morphism_of_extensions} (see \cref{rem:glueingequivalence_coherence}).

We must show that $\Phi$ is an equivalence.
We claim that the following pullback in $\Hom(\Gl(K^*E_*),\G)$ is the inverse of $\Phi$. (Here the bottom equality comes from $K^*E_* = \pi_1\pi_{2*}$.)

\begin{center}
  \begin{tikzpicture}[node distance=3.3cm, auto]
    \node (A) {$\Phi'$};
    \node (B) [below of=A] {$K \pi_{1}$};
    \node (D) [right=1.5cm of B] {$K \pi_1\pi_{2*} \pi_2$};
    \node (D') [right=0.7cm of D] {$K K^* E_* \pi_2$};
    \node (C) [above of=D'] {$E_* \pi_2$};
    \draw[->] (A) to node [swap] {} (B);
    \draw[->] (A) to node {} (C);
    \draw[->] (B) to node [swap] {$K \pi_1 \theta'$} (D);
    \draw[->] (C) to node {$\zeta E_* \pi_2$} (D');
    \draw[double equal sign distance] (D) to (D');
    \begin{scope}[shift=({A})]
        \draw +(0.25,-0.75) -- +(0.75,-0.75) -- +(0.75,-0.25);
    \end{scope}
  \end{tikzpicture}
\end{center}

We shall make extensive use of \cref{prop:pullback_representation_functors} in order to prove this.

To see that $\Phi' \Phi \cong \id_\G$ note that composition with $\Phi$ on the right preserves limits and thus can be represented as the following pullback in $\Hom(\G,\G)$.

\begin{center}
  \begin{tikzpicture}[node distance=3.3cm, auto]
    \node (A) {$\Phi'\Phi$};
    \node (B) [below of=A] {$K \pi_{1}\Phi$};
    \node (C) [right of=A] {$E_* \pi_2\Phi$};
    \node (D) [right of=B] {$K K^* E_* \pi_2\Phi$};
    \draw[->] (A) to node [swap] {} (B);
    \draw[->] (A) to node {} (C);
    \draw[->] (B) to node [swap] {$K \pi_1 \theta'\Phi$} (D);
    \draw[->] (C) to node {$\zeta E_* \pi_2\Phi$} (D);
    \begin{scope}[shift=({A})]
        \draw +(0.25,-0.75) -- +(0.75,-0.75) -- +(0.75,-0.25);
    \end{scope}
  \end{tikzpicture}
\end{center}

Note that $\pi_1\Phi = K^*$, $\pi_2\Phi = E$ and that $\theta'_{\Phi(G)} = (K^*\theta_G,\id_{E(G)})$ which of course gives that $K\pi_1\theta'\Phi = KK^*\theta$. After making these substitutions into the diagram above, we have the pullback square occurring in \cref{prop:pullback_representation_functors}, which by the universal property gives that $\Phi\Phi'$ is naturally isomorphic to $\Id_\G$.

The same idea works for $\Phi\Phi'$. We consider the following pullback square in the category of endofunctors on $\Gl(K^*E_*)$.
\begin{center}
  \begin{tikzpicture}[node distance=3.5cm, auto]
    \node (A) {$\Phi\Phi'$};
    \node (B) [below of=A] {$\Phi K \pi_{1}$};
    \node (C) [right of=A] {$\Phi E_* \pi_2$};
    \node (D) [right of=B] {$\Phi KK^* E_* \pi_2$};
    \node (B') [below=1.5cm of B] {$\pi_{1*}\pi_1$};
    \node (C') [right=1.5cm of C] {$\pi_{2*}\pi_2$};
    \node (D') at (B' -| C') {$\pi_{1*}\pi_1\pi_{2*}\pi_2$};
    \draw[->] (A) to node [swap] {} (B);
    \draw[->] (A) to node {} (C);
    \draw[->] (B) to node [swap] {$\Phi K \pi_1 \theta'$} (D);
    \draw[->] (C) to node {$\Phi\zeta E_* \pi_2$} (D);
    \draw[->] (B) to node [swap] {$\alpha\inv \pi_1$} node [sloped] {$\sim$} (B');
    \draw[->] (C) to node {$\gamma\inv \pi_2$} node [swap,sloped] {$\sim$} (C');
    \draw[->] (D) to node [swap,yshift=5pt] {$\alpha\inv \pi_1 \pi_{2*}\pi_2$} node [sloped] {$\sim$} (D');
    \draw[->] (B') to node [swap] {$\pi_{1*}\pi_1 \theta'$} (D');
    \draw[->] (C') to node {$\zeta'\pi_{2*}\pi_2$} (D');
    \begin{scope}[shift=({A})]
        \draw +(0.25,-0.75) -- +(0.75,-0.75) -- +(0.75,-0.25);
    \end{scope}
  \end{tikzpicture}
\end{center}

The bottom trapezium commutes by naturality of $\alpha\inv$ (and using $K^*E_* = \pi_1\pi_{2*}$), whereas both paths around the right-hand trapezium can be seen to compose to $(\id_{K^*E_*(H)}, !)$.
Thus this pullback diagram has the same form as that in \cref{prop:pullback_representation_functors} and so \cref{prop:glueingisadjoint} allows us to deduce that $\Phi\Phi'$ is naturally isomorphic to the identity, completing the proof.
\end{proof}
Together with \cref{prop:glueingisadjoint} this shows that adjoint split extensions and Artin glueings are essentially the same. The equivalence $\Phi$ so defined is natural in a sense that will become clear later in \cref{sec:equivalence}.

\section{The category of extensions}\label{sec:extension_category}

It follows from \cref{thm:glueingequivalence} that equivalence classes of adjoint extensions between $\H$ and $\Nvar$ are in bijection with those of $\Hom(\H,\Nvar)$. However, the extensions have a natural 2-categorical structure and so we would like to know how this relates to the categorical structure of $\Hom(\H,\Nvar)$.

\subsection{Morphisms of extensions}

\begin{definition}\label{def:morphism_of_extensions}
Suppose we have two adjoint extensions,
\begin{align*}
    \splitext{\Nvar}{K_1}{\G_1}{E_1}{E_{1*}}{\H} &\quad\text{and}\quad \splitext{\Nvar}{K_2}{\G_2}{E_2}{E_{2*}}{\H},
\end{align*}
with the same kernel and cokernel objects and associated isomorphisms $\epsilon_1$ and $\epsilon_2$ respectively. Consider the following diagram
where $\alpha$, $\beta$ and $\gamma$ are natural isomorphisms and $\Psi$ is a finite-limit-preserving functor.
\begin{center}
    \begin{tikzpicture}[node distance=2.5cm, auto]
    \node (A) {$\Nvar$};
    \node (B) [right of=A] {$\G_1$};
    \node (C) [right of=B] {$\H$};
    \node (C2) [right of=C] {$\G_1$};
    \node (C3) [right of=C2] {$\H$};
    \node (D) [below of=A] {$\Nvar$};
    \node (E) [right of=D] {$\G_2$};
    \node (F) [right of=E] {$\H$};
    \node (F2) [right of=F] {$\G_2$};
    \node (F3) [right of=F2] {$\H$};
    
    \draw[->] (A) to node {$K_1$} (B);
    \draw[->] (B) to node {$E_1$} (C);
    \draw[->] (C) to node {$E_{1*}$} (C2);
    \draw[->] (C2) to node {$E_1$} (C3);
    \draw[->] (D) to node [swap] {$K_2$} (E);
    \draw[->] (E) to node [swap] {$E_2$} (F);
    \draw[->] (F) to node [swap] {$E_{2*}$} (F2);
    \draw[->] (F2) to node [swap] {$E_2$} (F3);
    \draw[->] (B) to node [swap] {$\Psi$} (E);
    \draw[double equal sign distance] (A) to (D);
    \draw[double equal sign distance] (C) to (F);
    \draw[->] (C2) to node [swap] {$\Psi$} (F2);
    \draw[double equal sign distance] (C3) to (F3);
    
    \draw[double distance=4pt,-implies] ($(D)!0.38!(B)$) to node {$\alpha$} ($(D)!0.62!(B)$);
    \draw[double distance=4pt,-implies] ($(E)!0.38!(C)$) to node {$\beta$} ($(E)!0.62!(C)$);
    \draw[double distance=4pt,-implies] ($(F)!0.38!(C2)$) to node {$\gamma$} ($(F)!0.62!(C2)$);
    \draw[double distance=4pt,-implies] ($(F2)!0.38!(C3)$) to node {$\beta$} ($(F2)!0.62!(C3)$);
   \end{tikzpicture}
\end{center}
We say the functor $\Psi\colon \G_1 \to \G_2$ together with $\alpha$, $\beta$ and $\gamma$ is a \emph{morphism of adjoint extensions (of $\H$ by $\Nvar$)} if $\epsilon_2 = \epsilon_1 (\beta E_{1*})  (E_2 \gamma)$.

Given two such morphisms $(\Psi, \alpha, \beta, \gamma)$ and $(\Psi', \alpha', \beta', \gamma')$,
a \emph{2-morphism of adjoint extensions} is a natural transformation $\tau\colon \Psi \to \Psi'$ such that $\alpha' = (\tau K_1) \alpha$, $\beta = \beta' (E_2 \tau)$ and $\gamma' = (\tau E_{1*}) \gamma$.

The morphisms compose in the obvious way: by composing the functors and pasting the natural transformations together by juxtaposing the squares from the diagram above. Horizontal and vertical composition of 2-morphisms is given by the corresponding operations of the natural transformations $\tau$. It is not hard to see that this gives a strict 2-category $\ExtTwo(\H,\Nvar)$.
\end{definition}

\begin{remark}\label{rem:glueingequivalence_coherence}
 Note that the `equivalence of extensions' $(\Phi, \alpha, \beta, \gamma)$ defined in \cref{thm:glueingequivalence} is indeed a morphism of extensions in the above sense, since there $\epsilon_2$ and $\beta$ are identities and $\epsilon (\pi_2 \gamma) = \epsilon \epsilon\inv = \id$, as required.
\end{remark}

\begin{lemma}\label{lmm:betareduction}
 Given two adjoint extensions as above, a functor $\Psi\colon \G_1 \to \G_2$ and natural transformations $\alpha\colon K_2 \to \Psi K_1$ and $\gamma\colon E_{2*} \to \Psi E_{1*}$, there is a unique natural isomorphism $\beta\colon E_2 \Psi \to E_1$ making $(\Psi, \alpha, \beta, \gamma)$ a morphism of adjoint extensions.
 
 Furthermore, given two morphisms of adjoint extensions as above, a natural transformation $\tau\colon \Psi \to \Psi'$ is a 2-morphism of adjoint extensions if and only if $\alpha' = (\tau K_1) \alpha$ and $\gamma' = (\tau E_{1*}) \gamma$.
\end{lemma}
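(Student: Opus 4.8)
The plan is to observe that the defining equation $\epsilon_2 = \epsilon_1\,(\beta E_{1*})\,(E_2\gamma)$ of a morphism of extensions, in which $\epsilon_1$, $\epsilon_2$ and $E_2\gamma$ are all invertible, says precisely that $\beta E_{1*}$ must equal the single natural isomorphism $\omega := \epsilon_1\inv\,\epsilon_2\,(E_2\gamma)\inv\colon E_2\Psi E_{1*}\to E_1E_{1*}$. So I would reduce the first assertion to two facts: (i) every natural transformation $E_2\Psi\to E_1$ is determined by its whiskering along $E_{1*}$; and (ii) there is a natural isomorphism $\beta\colon E_2\Psi\to E_1$ with $\beta E_{1*} = \omega$. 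Both rest on $E_1$ being the 2-cokernel of $K_1$: the counit $\epsilon_1$ is then invertible, $E_{1*}$ is fully faithful, and the triangle identities relate $\epsilon_1$ to the unit $\theta_1\colon\Id_{\G_1}\to E_{1*}E_1$.

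For (i): given a natural transformation $\mu\colon E_2\Psi\to E_1$ and an object $G$ of $\G_1$, naturality of $\mu$ at $\theta_{1,G}$ gives $E_1(\theta_{1,G})\circ\mu_G = \mu_{E_{1*}E_1 G}\circ E_2\Psi(\theta_{1,G})$; precomposing with $\epsilon_{1,E_1 G}$ and using the triangle identity $\epsilon_{1,E_1 G}\circ E_1(\theta_{1,G}) = \id$ yields
\[
  \mu_G \;=\; \epsilon_{1,E_1 G}\circ(\mu E_{1*})_{E_1 G}\circ E_2\Psi(\theta_{1,G}),
\]
so $\mu$ is recovered from $\mu E_{1*}$. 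For (ii) I would first produce \emph{some} isomorphism: since $\alpha$ is invertible, $E_2\Psi K_1\cong E_2 K_2$ is a zero morphism, so by the universal property of the 2-cokernel $E_1$ of $K_1$ there is $h\colon\H\to\H$ with $hE_1\cong E_2\Psi$; whiskering with $E_{1*}$ and using $\epsilon_1$, $\gamma$, $\epsilon_2$ gives $h\cong h E_1 E_{1*}\cong E_2\Psi E_{1*}\cong E_2 E_{2*}\cong\Id_\H$, hence a natural isomorphism $\beta''\colon E_2\Psi\to E_1$. I would then correct it: the automorphism $\omega\circ(\beta'' E_{1*})\inv$ of $E_1 E_{1*}$ transports along the invertible natural transformation $\epsilon_1$ to an automorphism $\lambda$ of $\Id_\H$ with $\lambda E_1 E_{1*} = \omega\circ(\beta'' E_{1*})\inv$, and $\beta := (\lambda E_1)\circ\beta''$ is then a natural isomorphism with $\beta E_{1*} = \omega$. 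Combined with (i) this also yields uniqueness, so $\beta$ is the unique natural isomorphism making $(\Psi,\alpha,\beta,\gamma)$ a morphism of extensions. (One explicit choice for $\beta$ is the displayed formula with $\mu E_{1*}$ replaced by $\omega$.)

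For the second assertion, the forward direction is part of the definition of a 2-morphism. Conversely, assuming $\alpha' = (\tau K_1)\alpha$ and $\gamma' = (\tau E_{1*})\gamma$ --- the latter forcing $\tau E_{1*} = \gamma'\gamma\inv$ to be invertible --- I would show that $\beta'(E_2\tau)$ whiskers to $\omega$: from $(\beta'(E_2\tau))E_{1*} = (\beta' E_{1*})(E_2\tau E_{1*})$, the relation $\beta' E_{1*} = \epsilon_1\inv\epsilon_2(E_2\gamma')\inv$ (since $(\Psi',\alpha',\beta',\gamma')$ is a morphism of extensions), and $(E_2\gamma')\inv = (E_2\gamma)\inv(E_2\tau E_{1*})\inv$ (from $\gamma' = (\tau E_{1*})\gamma$), everything cancels to leave $\omega$. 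By the uniqueness in the first assertion, $\beta'(E_2\tau) = \beta$, which is exactly the remaining condition $\beta = \beta'(E_2\tau)$ in the definition of a 2-morphism. I expect the main obstacle to be the whiskering bookkeeping in (i) and (ii): this amounts to re-deriving that a 2-cokernel with invertible counit lets one transport natural transformations along its right adjoint, and because $\theta_1$ is not invertible it must all be channelled through the triangle identities.
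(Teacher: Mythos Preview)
Your proof is correct, and for uniqueness of $\beta$ and for the second assertion it is essentially the paper's argument spelled out: the paper phrases uniqueness as ``$\beta$ and $\gamma\inv$ are mates and hence determine each other'', which is exactly your recovery formula $\mu_G = \epsilon_{1,E_1G}\circ(\mu E_{1*})_{E_1G}\circ E_2\Psi(\theta_{1,G})$; and for the 2-morphism claim both arguments reduce $\beta'(E_2\tau)=\beta$ to the relation $\gamma'=(\tau E_{1*})\gamma$, the paper via a string-diagram computation and you via the uniqueness just established.

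Where you genuinely diverge is in proving that $\beta$ is invertible. The paper writes $\beta=(\epsilon_2 E_1)(E_2\gamma\inv E_1)(E_2\Psi\theta_1)$ and then shows $E_2\Psi\theta_1$ is an isomorphism by applying $E_2\Psi$ to the pullback square of \cref{prop:pullback_representation_functors}: since $E_2\Psi K_1\cong E_2K_2$ is a zero morphism, the bottom edge of that square becomes an isomorphism, hence so does its pullback $E_2\Psi\theta_1$. You instead avoid \cref{prop:pullback_representation_functors} entirely, using only the 2-cokernel universal property of $E_1$ to produce some isomorphism $\beta''\colon E_2\Psi\to E_1$ and then correcting it by an automorphism $\lambda$ of $\Id_\H$ (your transport step is justified: naturality of $\lambda$ at $\epsilon_{1,H}$ gives $\lambda E_1E_{1*}=\epsilon_1\inv\lambda\epsilon_1$, so setting $\lambda=\epsilon_1\nu\epsilon_1\inv$ for $\nu=\omega(\beta''E_{1*})\inv$ yields $\lambda E_1E_{1*}=\nu$ as you claim). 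Your route is more self-contained and would go through in any pointed 2-category with the requisite adjoint cokernels, whereas the paper's route ties into the structural result \cref{prop:pullback_representation_functors} that is reused throughout the paper (notably in \cref{prop:2morphisms_of_extensions} and \cref{prp:cocomma}).
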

\begin{proof}
  Any such $\beta$ must satisfy $\epsilon_2 = \epsilon_1 (\beta E_{1*})  (E_2 \gamma)$.
  But this can be rewritten as $\epsilon_2 (E_2 \gamma\inv) = \epsilon_1 (\beta E_{1*})$, which shows that $\beta$ and $\gamma\inv$ are mates with respect to the adjunctions $E_1 \dashv E_{1*}$ and $E_2 \dashv E_{2*}$ and hence determine each other.
  
  We now show that $\beta$ so defined is an isomorphism.
  As the mate of $\gamma\inv$, we can express $\beta$ as $(\epsilon_2 E_1) (E_2 \gamma\inv E_1) (E_2\Psi\theta_1)$. Now since $\epsilon_2$ and $\gamma\inv$ are isomorphisms, we need only show $E_2\Psi\theta_1$ is an isomorphism. This map occurs in the pullback obtained by applying $E_2 \Psi$ to the pullback square of \cref{prop:pullback_representation_functors}.
  \begin{center}
   \begin{tikzpicture}[node distance=3.6cm, auto]
    \node (A) {$E_2 \Psi$};
    \node (B) [below of=A] {$E_2 \Psi K_1 K_1^*$};
    \node (C) [right=3.4cm of A] {$E_2 \Psi E_{1*} E_1$};
    \node (D) [below of=C] {$E_2 \Psi K_1 K_1^* E_{1*} E_1$};
    \draw[->] (A) to node [swap] {$E_2 \Psi \zeta_1$} (B);
    \draw[->] (A) to node {$E_2 \Psi \theta_1$} (C);
    \draw[->] (B) to node [swap] {$E_2 \Psi K_1 K_1^* \theta_1$} (D);
    \draw[->] (C) to node {$E_2 \Psi \zeta_1 E_{1*} E_1$} (D);
    \begin{scope}[shift=({A})]
        \draw +(0.25,-0.75) -- +(0.75,-0.75) -- +(0.75,-0.25);
    \end{scope}
   \end{tikzpicture}
  \end{center}
  Now observe that $E_2\Psi K_1 \cong E_2 K_2 \cong 1$ is a zero morphism and hence so are $E_2\Psi K_1 K_1^*$ and $E_2\Psi K_1 K_1^* E_{1*} E_1$. Therefore, the bottom arrow of the above diagram is an isomorphism, and as the pullback of an isomorphism, $E_2\Psi\theta_1$ is an isomorphism too.
  
  Finally, we show that the condition on $\beta$ for 2-morphisms of extensions is automatic.
  Simply observe the following string diagrams. 
  \\ \vspace{-3pt}
  \begin{equation*}
   \begin{tikzpicture}[scale=0.7,baseline={([yshift=-0.5ex]current bounding box.center)}]
    \path coordinate[dot, label=below:$\gamma^{\prime\,-1}$] (mu)
    +(0,1) coordinate (d)
    +(-1,-1) coordinate (mbl)
    +(1,-1) coordinate (mbr);
    \path (d) ++(1,1) coordinate[dot, label=below:$\epsilon_2$] (epsilon) ++(1,-1) coordinate (e) ++(0,-5.0) coordinate[label=below:{\vphantom{$E$}\smash{$E_2$}}] (brr);
    \path (mbl) ++(0,-0.5) coordinate (a) ++(-1.0,-1) coordinate[dot, label=above:$\theta_1$] (eta) ++(-1.0,1) coordinate (c) ++(0,4.0) coordinate[label=above:$E_1$] (tl);
    \path (mbr) ++ (0,-2.0) coordinate[dot,label=left:$\tau$] (fr) ++ (0,-1.0) coordinate[label=below:$\Psi$] (br);
    \draw (tl) -- (c) to[out=-90, in=180] (eta) to[out=0, in=-90] (a)
               -- (mbl) to[out=90, in=180] (mu.center) to[out=0, in=90] (mbr)
               -- (fr) -- (br)
        (mu) -- (d) to[out=90, in=180] (epsilon) to[out=0, in=90] (e) -- (brr);
    \coordinate (cornerNW) at ($(tl) + (-0.5,0)$);
    \coordinate (cornerSE) at ($(brr) + (0.5,0)$);
    \coordinate (cornerNE) at (cornerNW -| cornerSE);
    \coordinate (cornerSW) at (cornerNW |- cornerSE);
    \begin{pgfonlayer}{bg}
    \fill[\colorGone] (cornerSW) -- (cornerNW)
                    -- (tl) -- (c) to[out=-90, in=180] (eta.center) to[out=0, in=-90] (a)
                    -- (mbl) to[out=90, in=180] (mu.center) to[out=0, in=90] (mbr)
                    -- (br) -- cycle;
    \fill[\colorGtwo] (mu.east) to[out=0, in=90] (mbr) -- (br)
                    -- (brr) -- (e) to[in=0, out=90] (epsilon.east) -- (epsilon.west) to[in=90, out=180] (d) -- (mu) -- cycle;
    \fill[\colorH] (tl) -- (c) to[out=-90, in=180] (eta.center) to[out=0, in=-90] (a)
                    -- (mbl) to[out=90, in=180] (mu.center)
                    -- (d) to[out=90, in=180] (epsilon.center) to[out=0, in=90] (e) -- (brr)
                    -- (cornerSE) -- (cornerNE) -- cycle;
    \end{pgfonlayer}
    \coordinate (tmp1) at ($(tl)!0.5!(mu)$);
    \coordinate (tmp2) at ($(mu)!0.5!(epsilon)$);
    \node (Hlabel) at (tmp1 |- tmp2) {$\H$};
    \coordinate (tmp3) at ($(eta)!0.45!(fr)$);
    \coordinate (tmp4) at ($(fr) + (0,-4pt)$);
    \node (Glabel1) at (tmp3 |- tmp4) {$\G_1$};
    \coordinate (tmp5) at ($(mu)!0.5!(brr) + (5pt,0)$);
    \coordinate (tmp6) at ($(epsilon)!0.75!(mu)$);
    \node (Glabel2) at (tmp5 |- tmp6) {$\G_2$};
   \end{tikzpicture}
   \enspace=\enspace
   \begin{tikzpicture}[scale=0.7,baseline={([yshift=-0.5ex]current bounding box.center)}]
    \path coordinate[dot, label=below:$\gamma^{\prime\,-1}$] (mu)
    +(0,1) coordinate (d)
    +(-1,-1) coordinate (mbl)
    +(1,-1) coordinate (mbr);
    \path (d) ++(1,1) coordinate[dot, label=below:$\epsilon_2$] (epsilon) ++(1,-1) coordinate (e) ++(0,-5.0) coordinate[label=below:{\vphantom{$E$}\smash{$E_2$}}] (brr);
    \path (mbl) ++(0,-1) coordinate (a) ++(-1.0,-1) coordinate[dot, label=above:$\theta_1$] (eta) ++(-1.0,1) coordinate (c) ++(0,4.5) coordinate[label=above:$E_1$] (tl);
    \path (mbr) ++ (0,-1.0) coordinate[dot,label=left:$\tau$] (fr) ++ (0,-2.0) coordinate[label=below:$\Psi$] (br);
    \draw (tl) -- (c) to[out=-90, in=180] (eta) to[out=0, in=-90] (a)
               -- (mbl) to[out=90, in=180] (mu.center) to[out=0, in=90] (mbr)
               -- (fr) -- (br)
        (mu) -- (d) to[out=90, in=180] (epsilon) to[out=0, in=90] (e) -- (brr);
    \coordinate (cornerNW) at ($(tl) + (-0.5,0)$);
    \coordinate (cornerSE) at ($(brr) + (0.5,0)$);
    \coordinate (cornerNE) at (cornerNW -| cornerSE);
    \coordinate (cornerSW) at (cornerNW |- cornerSE);
    \begin{pgfonlayer}{bg}
    \fill[\colorGone] (cornerSW) -- (cornerNW)
                    -- (tl) -- (c) to[out=-90, in=180] (eta.center) to[out=0, in=-90] (a)
                    -- (mbl) to[out=90, in=180] (mu.center) to[out=0, in=90] (mbr)
                    -- (br) -- cycle;
    \fill[\colorGtwo] (mu.east) to[out=0, in=90] (mbr) -- (br)
                    -- (brr) -- (e) to[in=0, out=90] (epsilon.east) -- (epsilon.west) to[in=90, out=180] (d) -- (mu) -- cycle;
    \fill[\colorH] (tl) -- (c) to[out=-90, in=180] (eta.center) to[out=0, in=-90] (a)
                    -- (mbl) to[out=90, in=180] (mu.center)
                    -- (d) to[out=90, in=180] (epsilon.center) to[out=0, in=90] (e) -- (brr)
                    -- (cornerSE) -- (cornerNE) -- cycle;
    \end{pgfonlayer}
   \end{tikzpicture}
   \enspace=\enspace
   \begin{tikzpicture}[scale=0.7,baseline={([yshift=-0.5ex]current bounding box.center)}]
    \path coordinate[dot, label=below:$\gamma\inv$] (mu)
    +(0,1) coordinate (d)
    +(-1,-1) coordinate (mbl)
    +(1,-1) coordinate (mbr);
    \path (d) ++(1,1) coordinate[dot, label=below:$\epsilon_2$] (epsilon) ++(1,-1) coordinate (e) ++(0,-5.0) coordinate[label=below:{\vphantom{$E$}\smash{$E_2$}}] (brr);
    \path (mbl) ++(0,-1) coordinate (a) ++(-1.0,-1) coordinate[dot, label=above:$\theta_1$] (eta) ++(-1.0,1) coordinate (c) ++(0,4.5) coordinate[label=above:$E_1$] (tl);
    \path (mbr) ++ (0,-1.0) coordinate (fr) ++ (0,-2.0) coordinate[label=below:$\Psi$] (br);
    \draw (tl) -- (c) to[out=-90, in=180] (eta) to[out=0, in=-90] (a)
               -- (mbl) to[out=90, in=180] (mu.center) to[out=0, in=90] (mbr)
               -- (fr) -- (br)
        (mu) -- (d) to[out=90, in=180] (epsilon) to[out=0, in=90] (e) -- (brr);
    \coordinate (cornerNW) at ($(tl) + (-0.5,0)$);
    \coordinate (cornerSE) at ($(brr) + (0.5,0)$);
    \coordinate (cornerNE) at (cornerNW -| cornerSE);
    \coordinate (cornerSW) at (cornerNW |- cornerSE);
    \begin{pgfonlayer}{bg}
    \fill[\colorGone] (cornerSW) -- (cornerNW)
                    -- (tl) -- (c) to[out=-90, in=180] (eta.center) to[out=0, in=-90] (a)
                    -- (mbl) to[out=90, in=180] (mu.center) to[out=0, in=90] (mbr)
                    -- (br) -- cycle;
    \fill[\colorGtwo] (mu.east) to[out=0, in=90] (mbr) -- (br)
                    -- (brr) -- (e) to[in=0, out=90] (epsilon.east) -- (epsilon.west) to[in=90, out=180] (d) -- (mu) -- cycle;
    \fill[\colorH] (tl) -- (c) to[out=-90, in=180] (eta.center) to[out=0, in=-90] (a)
                    -- (mbl) to[out=90, in=180] (mu.center)
                    -- (d) to[out=90, in=180] (epsilon.center) to[out=0, in=90] (e) -- (brr)
                    -- (cornerSE) -- (cornerNE) -- cycle;
    \end{pgfonlayer}
   \end{tikzpicture}
  \end{equation*}
  Here the first diagram represents $\beta' (E_2 \tau)$ and the last diagram represents $\beta$.
  In moving from the first diagram to the second we shift $\tau$ above $\theta_1$ and
  to move from the second diagram to the third we use $\gamma' = (\tau E_{1*}) \gamma$.
\end{proof}

\begin{lemma}\label{prop:2morphisms_of_extensions}
 Suppose $(\Psi, \alpha, \beta, \gamma)$ and $(\Psi', \alpha', \beta', \gamma')$ are parallel morphisms of extensions. Then any 2-morphism $\tau$ between them is unique and invertible.
 
 Moreover, such a 2-morphism exists if and only if $\alpha'\alpha\inv K_1^* E_{1*} \circ \Psi \zeta_1 E_{1*} = \Psi' \zeta_1 E_{1*} \circ \gamma'\gamma\inv$.
\end{lemma}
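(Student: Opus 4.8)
The plan is to extract everything from the pullback presentation of \cref{prop:pullback_representation_functors}. By \cref{lmm:betareduction} a 2-morphism of extensions $\tau\colon \Psi \to \Psi'$ is just a natural transformation with $\tau K_1 = \alpha'\alpha\inv$ and $\tau E_{1*} = \gamma'\gamma\inv$, so these are the only two conditions to track. Applying \cref{prop:pullback_representation_functors} to $\G_1$ and whiskering on the left by $\Psi$ (which preserves finite limits, hence this pullback) exhibits $\Psi$ as a pullback in $\Hom(\G_1,\G_2)$ with legs $\Psi\zeta_1\colon \Psi \to \Psi K_1 K_1^*$ and $\Psi\theta_1\colon \Psi \to \Psi E_{1*}E_1$, and similarly for $\Psi'$; here $\zeta_1$ and $\theta_1$ are the units of $K_1^* \dashv K_1$ and $E_1 \dashv E_{1*}$.

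First I would treat uniqueness and invertibility of an arbitrary 2-morphism $\tau$ simultaneously. The interchange law gives $(\Psi'\zeta_1)\tau = (\tau K_1 K_1^*)(\Psi\zeta_1)$ and $(\Psi'\theta_1)\tau = (\tau E_{1*}E_1)(\Psi\theta_1)$, and substituting the two conditions on $\tau$ forces its composites with the legs of the $\Psi'$-pullback to equal $(\alpha'\alpha\inv K_1^*)(\Psi\zeta_1)$ and $(\gamma'\gamma\inv E_1)(\Psi\theta_1)$. Since the legs of a pullback are jointly monic, $\tau$ is unique. Moreover $\tau$ is then the apex component of a morphism of pullback squares whose other three components are $\alpha'\alpha\inv K_1^*$, $\gamma'\gamma\inv E_1$ and $\alpha'\alpha\inv K_1^* E_{1*}E_1$; of the two cospan-compatibility squares, one holds by interchange and the other is precisely the displayed identity whiskered on the right by $E_1$ — which does hold for an existing $\tau$, since whiskering $(\Psi'\zeta_1)\tau = (\alpha'\alpha\inv K_1^*)(\Psi\zeta_1)$ on the right by $E_{1*}$ and using $\tau E_{1*} = \gamma'\gamma\inv$ reproduces it. As $\alpha,\alpha',\gamma,\gamma'$ are isomorphisms, so are those three components, and hence so is the induced apex map $\tau$. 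This also settles the ``only if'' direction of the final claim.

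For the converse, assume the displayed identity holds. Then the three maps above genuinely assemble into a morphism of pullback cospans (the two squares needed being interchange and the identity whiskered by $E_1$), whose induced apex map is an automatically invertible natural transformation $\tau\colon \Psi \to \Psi'$ with $(\Psi'\zeta_1)\tau = (\alpha'\alpha\inv K_1^*)(\Psi\zeta_1)$ and $(\Psi'\theta_1)\tau = (\gamma'\gamma\inv E_1)(\Psi\theta_1)$. It remains to check that this $\tau$ is a 2-morphism of extensions. For $\tau K_1 = \alpha'\alpha\inv$, I would whisker the first of these equations on the right by $K_1$; the triangle identity rewrites $\zeta_1 K_1$ as $K_1\delta_1\inv$, where $\delta_1$ — the counit of $K_1^* \dashv K_1$ — is invertible because the 2-kernel $K_1$ is fully faithful, so the naturality of $\alpha'\alpha\inv\colon \Psi K_1 \to \Psi' K_1$ at the components of $\delta_1\inv$ turns the right-hand side into $(\Psi'\zeta_1 K_1)(\alpha'\alpha\inv)$, and cancelling the isomorphism $\Psi'\zeta_1 K_1$ yields the claim. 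The identity $\tau E_{1*} = \gamma'\gamma\inv$ is dual, whiskering the $\theta_1$-equation on the right by $E_{1*}$ and using that the counit $\epsilon_1$ of $E_1 \dashv E_{1*}$ (hence $\theta_1 E_{1*}$) is invertible.

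I expect this last verification to be the main obstacle: the pullback presentation by itself only pins $\tau$ down through its composites with $\Psi'\zeta_1$ and $\Psi'\theta_1$, so recovering from this the whiskerings $\tau K_1$ and $\tau E_{1*}$ is exactly the point where the full faithfulness of the kernel and cokernel inclusions — equivalently, invertibility of $\delta_1$ and $\epsilon_1$ — has to be used.
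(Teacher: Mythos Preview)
Your proof is correct and follows essentially the same route as the paper: both use the pullback square of \cref{prop:pullback_representation_functors} whiskered by $\Psi$ and $\Psi'$, obtain uniqueness from joint monicity of the pullback legs, and construct $\tau$ for the converse as the apex map of a morphism of pullback cospans whose commutativity reduces to the displayed identity whiskered by $E_1$.

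There are two small differences worth noting. First, for invertibility the paper observes that the displayed condition is symmetric under swapping $(\Psi,\alpha,\gamma)$ with $(\Psi',\alpha',\gamma')$ and inverting, so one gets a second induced map in the opposite direction and appeals to uniqueness; you instead argue directly that an isomorphism of pullback diagrams induces an isomorphism on apices. Both are standard. Second, you are more explicit than the paper about the final verification that the constructed $\tau$ really satisfies $\tau K_1 = \alpha'\alpha\inv$ and $\tau E_{1*} = \gamma'\gamma\inv$: the pullback construction only gives you compatibility with the legs $\Psi'\zeta_1$ and $\Psi'\theta_1$, and your use of the invertibility of $\delta_1$ and $\epsilon_1$ (full faithfulness of $K_1$ and $E_{1*}$) to pass from these to the whiskered identities is exactly the right move. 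The paper leaves this step implicit. Your anticipation that this is ``the main obstacle'' is well judged.
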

\begin{proof}
  Suppose $\tau\colon \Psi \to \Psi'$ is a 2-morphism of extensions. Then we have $\tau K_1 = \alpha'\alpha^{-1}$ and $\tau E_{1*} = \gamma' \gamma^{-1}$. Now by composing the pullback square of \cref{prop:pullback_representation_functors} with $\Psi$ and $\Psi'$ and using the naturality of $\tau$ we have the following commutative cube in $\Hom_{\FLTop}(\G_1, \G_2)$.
  \begin{center}
   \begin{tikzpicture}[node distance=4.5cm, auto, on grid,
    cross line/.style={preaction={draw=white, -,line width=6pt}}
    ]
    
    \node (A) {$\Psi'$};
    \node [right of=A] (B) {$\Psi' E_{1*}E_1$};
    \node [below of=A] (C) {$\Psi' K_1K_1^*$};
    \node [right of=C] (D) {$\Psi' K_1K_1^*E_{1*}E_1$};
    
    \begin{scope}[shift=({A})]
        \draw [draw=black] +(0.25,-0.75) coordinate (X) -- +(0.75,-0.75) coordinate (Y) -- +(0.75,-0.25) coordinate (Z);
    \end{scope}
    
    \node [above left=1.4cm and 1.9cm of A.center] (A1) {$\Psi$};
    \node [right of=A1] (B1) {$\Psi E_{1*}E_1$};
    \node [below of=A1] (C1) {$\Psi K_1K_1^*$};
    \node [right of=C1] (D1) {$\Psi K_1K_1^*E_{1*}E_1$};
    
    \begin{scope}[shift=({A1})]
        \draw [draw=black!70] +(0.22,-0.78) coordinate (X1) -- +(0.78,-0.78) coordinate (Y1) -- +(0.78,-0.22) coordinate (Z1);
    \end{scope}
    
    \draw[->] (A1) to node {$\Psi \theta_1$} (B1);
    \draw[->] (A1) to node [swap] {$\Psi \zeta_1$} (C1);
    \draw[->] (B1) to node [pos=0.65] {$\Psi \zeta_1 E_{1*}E_1$} (D1);
    \draw[->] (C1) to node [node on layer=over,fill=white,inner sep=2pt,outer sep=0pt,yshift=1pt] (K) {\small $\Psi K_1K_1^*\theta_1$} (D1);
    
    \draw[cross line, ->] (A) to node [pos=0.35] {$\Psi' \theta_1$} (B);
    \draw[cross line, ->] (A) to node [swap, pos=0.25] {$\Psi' \zeta_1$} (C);
    \draw[cross line, ->] (B) to node {$\Psi' \zeta_1 E_{1*}E_1$} (D);
    \draw[cross line, ->] (C) to node [swap] {\small $\Psi' K_1K_1^*\theta_1$} (D);
    
    \begin{scope}
    \begin{pgfonlayer}{over}
    \clip (K.south west) rectangle (K.north east);
    \draw[->, opacity=0.7] (A) to (C);
    \end{pgfonlayer}
    \end{scope}
    
    \draw[->] (A1) to node [yshift=-2pt] {$\tau$} (A);
    \draw[->] (B1) to node [xshift=-1pt, yshift=-3pt] {\small $\tau  E_{1*}E_1$} (B);
    \draw[->] (C1) to node [swap, yshift=2pt] {\small $\tau K_1K_1^*$} (C);
    \draw[->] (D1) to node [swap, pos=0.2, xshift=5pt] {\small $\tau K_1K_1^*E_{1*}E_1$} (D);
   \end{tikzpicture}
  \end{center}
  The universal property of the pullback on the front face then gives that $\tau$ is uniquely determined by $\tau K_1K_1^*$ and $\tau  E_{1*}E_1$, and hence by $\tau K_1 = \alpha'\alpha^{-1}$ and $\tau E_{1*} = \gamma'\gamma^{-1}$. Thus, the morphism $\tau$ is unique if it exists.
  
  We can also attempt to use a similar cube to construct $\tau$ without assuming it exists a priori by replacing $\tau K_1$ with $\alpha'\alpha\inv$ and $\tau E_{1*} = \gamma'\gamma\inv$ in the above diagram. However, in order to obtain a map $\tau$ from the universal property of the pullback, we require that the right-hand face commutes.
  \begin{center}
   \begin{tikzpicture}[node distance=3.0cm, auto]
    \node (A) {$\Psi E_{1*} E_1$};
    \node (B) [below of=A] {$\Psi K_1 K_1^* E_{1*} E_1$};
    \node (C) [right=3.4cm of A] {$\Psi' E_{1*} E_1$};
    \node (D) [below of=C] {$\Psi' K_1 K_1^* E_{1*} E_1$};
    \draw[->] (A) to node [swap] {$\Psi \zeta_1 E_{1*} E_1$} (B);
    \draw[->] (A) to node {$\gamma'\gamma\inv E_1$} (C);
    \draw[->] (B) to node [swap] {$\alpha'\alpha\inv K_1^* E_{1*} E_1$} (D);
    \draw[->] (C) to node {$\Psi' \zeta_1 E_{1*} E_1$} (D);
   \end{tikzpicture}
  \end{center}
  Note that this square commutes if and only if the similar square obtained by inverting the horizontal morphisms commutes. But this latter square is precisely the square we need to commute to obtain a 2-morphism in the opposite direction. Uniqueness then shows that these two 2-morphisms compose to give identities.
  
  Finally, observe that commutativity of this square is the required equality stated above whiskered with $E_1$. This is equivalent to the desired condition, since $E_1$ is essentially surjective.
\end{proof}

\begin{corollary}\label{cor:extensions_only_1_cat}
The 2-category of adjoint extensions $\ExtTwo(\H,\Nvar)$ is equivalent to the locally trivial 2-category $\ExtOne(\H,\Nvar)$ of adjoint extensions and isomorphism classes of morphisms.
\end{corollary}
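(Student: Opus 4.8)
The plan is to realise $\ExtOne(\H,\Nvar)$ as the $1$-truncation of the $2$-category $\ExtTwo(\H,\Nvar)$ and to check that the truncation map is a $2$-equivalence. The crucial input is \cref{prop:2morphisms_of_extensions}: between any two parallel $1$-morphisms of extensions there is \emph{at most one} $2$-morphism of extensions, and any such $2$-morphism is invertible. In other words, writing $\mathbb X$ and $\mathbb Y$ for two adjoint split extensions, each hom-category $\Hom_{\ExtTwo(\H,\Nvar)}(\mathbb X,\mathbb Y)$ is simultaneously a groupoid and a preorder; equivalently, it is the category associated to the equivalence relation ``is isomorphic to'' on its objects. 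Such a category is equivalent --- via the functor sending each object to its isomorphism class and each morphism to an identity --- to the discrete category $\pi_0\Hom_{\ExtTwo(\H,\Nvar)}(\mathbb X,\mathbb Y)$ on its set of isomorphism classes.

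First I would pin down $\ExtOne(\H,\Nvar)$ precisely: its objects are the adjoint split extensions of $\H$ by $\Nvar$ (as in $\ExtTwo(\H,\Nvar)$), its $1$-morphisms $\mathbb X \to \mathbb Y$ are isomorphism classes of morphisms of extensions (\cref{def:morphism_of_extensions}), and its only $2$-morphisms are identities --- this is what ``locally trivial'' means. Composition of $1$-morphisms is induced by composition in $\ExtTwo(\H,\Nvar)$; it descends to isomorphism classes because horizontal composition of $1$-morphisms in \emph{any} $2$-category preserves $2$-isomorphism, so if $(\Psi,\alpha,\beta,\gamma)\cong(\Psi',\alpha',\beta',\gamma')$ and $(\Phi,\dots)\cong(\Phi',\dots)$ are composable then their composites are again isomorphic. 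Associativity and unitality on isomorphism classes are inherited from $\ExtTwo(\H,\Nvar)$, so $\ExtOne(\H,\Nvar)$ is a strict $2$-category, and there is an evident strict $2$-functor $Q\colon \ExtTwo(\H,\Nvar) \to \ExtOne(\H,\Nvar)$ that is the identity on objects, sends each morphism of extensions to its isomorphism class, and sends every $2$-morphism to an identity.

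Next I would verify that $Q$ is a $2$-equivalence. It is the identity on objects, hence essentially surjective. On hom-categories it is the quotient functor $\Hom_{\ExtTwo(\H,\Nvar)}(\mathbb X,\mathbb Y) \to \pi_0\Hom_{\ExtTwo(\H,\Nvar)}(\mathbb X,\mathbb Y)$ described above, which by \cref{prop:2morphisms_of_extensions} is an equivalence of categories: it is surjective on objects; it is faithful since the source is a preorder, so has at most one morphism between any two objects; and it is full since two objects with the same image are isomorphic in the source and the unique such isomorphism maps to the relevant identity $2$-cell. A $2$-functor that is bijective on objects and induces an equivalence on each hom-category is a $2$-equivalence. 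If a more explicit statement is wanted, one chooses a representative morphism of extensions in each isomorphism class; these assemble --- using the uniqueness part of \cref{prop:2morphisms_of_extensions} to pin down all coherence $2$-cells, and \cref{lmm:betareduction} to supply the $\beta$-components --- into a pseudo-inverse $2$-functor $\ExtOne(\H,\Nvar) \to \ExtTwo(\H,\Nvar)$.

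The argument is essentially formal once \cref{prop:2morphisms_of_extensions} is available; the only point needing a little care is the well-definedness of composition in $\ExtOne(\H,\Nvar)$, i.e. that the composite of two morphisms of extensions is unchanged up to isomorphism when either factor is replaced by an isomorphic one. As noted this is immediate from the general fact that horizontal composition preserves $2$-isomorphism in a $2$-category, but it can also be checked by hand via \cref{lmm:betareduction}: the horizontal composite of the two witnessing $2$-cells is compatible with the $\alpha$- and $\gamma$-components of the composite morphisms by naturality and the interchange law, and automatically compatible with the $\beta$-components by that lemma.
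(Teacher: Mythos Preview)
Your proposal is correct and follows essentially the same approach as the paper: the paper states this as an immediate corollary of \cref{prop:2morphisms_of_extensions} with no further proof, and you have simply spelled out the formal details of why a 2-category whose hom-categories are preorder groupoids is equivalent to its 1-truncation. There is nothing to add.
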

This will justify working with $\ExtOne(\H,\Nvar)$ (viewed as a 1-category) going forward.

\begin{lemma}\label{lem:associatednat}
  From a morphism of extensions $(\Psi, \alpha, \beta, \gamma)$ we can form an associated natural transformation
  from $K_2^*E_{2*}$ to $K_1^*E_{1*}$ given by $(\delta_2 K_1^*E_{1*}) (K_2^* \alpha\inv K_1^*E_{1*}) (K_2^* \Psi \zeta_1 E_{1*}) (K_2^* \gamma)$ where $\delta_2$ is the counit of the $K_2^* \dashv K_2$ adjunction 
  and which is depicted below.
  Two parallel morphisms of extensions are isomorphic if and only if their corresponding natural transformations are equal.
  \begin{center}
   \begin{tikzpicture}[scale=0.55,baseline={([yshift=-0.5ex]current bounding box.center)}]
    \path coordinate[label=below:{\vphantom{$E_*$}\smash{$E_{2*}$}}] (b)
      ++(0,1) coordinate[dot, label=above:$\gamma$] (d)
       +(-2.5,2.5) coordinate (minvl)
       +(2.5,2.5) coordinate (minvr);
    \path (minvl) +(0,3.5) coordinate[label=above:{\vphantom{$E_1$}\smash{$E_{1*}$}}] (itl);
    \path (minvr) +(0,0.75) coordinate (itr);
    \path (d) ++(0,1.5) coordinate[dot, label=above:$\zeta_1$] (e)
              +(-1.0,1.0) coordinate (el)
              +(1.0,1.0) coordinate (er);
    \path (el) ++(0,3.5) coordinate[label=above:{\vphantom{$K_1$}\smash{$K_1^*$}}] (etl);
    \path (er) ++(0,0.75) coordinate (f)
               ++(0.75,0.75) coordinate[dot, label=below:$\alpha\inv$] (a)
               ++(0,0.5) coordinate (g1)
                +(1.0,1.0) coordinate[dot, label=below:$\delta_2$] (h)
                +(2.0,0) coordinate (g2);
    \coordinate[label=below:{\vphantom{$K_2$}\smash{$K_2^*$}}] (br) at (b -| g2);
    \draw (b) -- (d)
          (itl) -- (minvl) to[out=-90, in=180] (d) to[out=0, in=-90] (minvr) -- (itr)
          (etl) -- (el) to[out=-90, in=180] (e.center) to[out=0, in=-90] (er) -- (f)
          (f) to[out=90, in=180] (a) to[out=0, in=90] (itr)
          (a.center) -- (g1) to[out=90, in=180] (h) to[out=0, in=90] (g2) -- (br);
    \coordinate (cornerNW) at ($(itl) + (-0.5,0)$);
    \coordinate (cornerSE) at ($(br) + (0.5,0)$);
    \begin{pgfonlayer}{bg}
    \fill[\colorH] (cornerNW) rectangle (b);
    \fill[\colorGtwo] (b |- cornerNW) rectangle (br);
    \fill[\colorGone] (itl) -- (minvl) to[out=-90, in=180] (d.center) to[out=0, in=-90] (minvr) -- (itr) to[in=0, out=90] (a.center) -- (a.west) to[in=90, out=180] (f) -- (er) to[in=0, out=-90] (e.center) to[in=-90, out=180] (el) -- (etl) -- cycle;
    \fill[\colorN] (etl) -- (el) to[out=-90, in=180] (e.center) to[out=0, in=-90] (er) -- (f) to[out=90, in=180] (a.west) -- (a.center) -- (g1) to[out=90, in=180] (h.west) -- (h.center) to[out=0, in=90] (g2) -- (br) -- (cornerSE) -- (cornerSE |- cornerNW) -- cycle;
    \end{pgfonlayer}
    \coordinate (tmp1) at ($(etl)!0.5!(a)$);
    \coordinate (tmp2) at ($(etl)!0.6!(a)$);
    \node (Nlabel) at (tmp1 |- tmp2) {$\Nvar$};
   \end{tikzpicture}
  \end{center}
\end{lemma}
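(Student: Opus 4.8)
The plan is to identify the displayed composite as an adjoint transpose, and then read off the equivalence from the criterion already recorded in \cref{prop:2morphisms_of_extensions}. The composite $(\delta_2 K_1^*E_{1*}) (K_2^* \alpha\inv K_1^*E_{1*}) (K_2^* \Psi \zeta_1 E_{1*}) (K_2^* \gamma)$ is, being a vertical composite of whiskered 2-cells, automatically a natural transformation $K_2^* E_{2*} \to K_1^* E_{1*}$, so nothing need be checked about naturality. Writing $u$ for the 2-cell $(\alpha\inv K_1^* E_{1*})(\Psi \zeta_1 E_{1*})\,\gamma \colon E_{2*} \to K_2 K_1^* E_{1*}$, functoriality of whiskering by $K_2^*$ together with the transpose formula for the adjunction $K_2^* \dashv K_2$ with counit $\delta_2$ shows that the displayed natural transformation is exactly the adjunct $(\delta_2 K_1^*E_{1*})(K_2^* u)$ of $u$. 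Since transposition along an adjunction is bijective, two parallel morphisms of extensions $(\Psi, \alpha, \beta, \gamma)$ and $(\Psi', \alpha', \beta', \gamma')$ have equal associated natural transformations precisely when $u = u'$, where $u' = (\alpha'\inv K_1^* E_{1*})(\Psi' \zeta_1 E_{1*})\,\gamma'$ is built from the primed data.

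It then remains to match the equation $u = u'$ with the condition for the existence of a 2-morphism of extensions. By \cref{prop:2morphisms_of_extensions} (and the fact, from the same lemma, that any such 2-morphism is automatically invertible), the two morphisms are isomorphic if and only if $\alpha'\alpha\inv K_1^* E_{1*} \circ \Psi \zeta_1 E_{1*} = \Psi' \zeta_1 E_{1*} \circ \gamma'\gamma\inv$. I would precompose both sides of this equation with the isomorphism $\gamma$: since whiskering distributes over vertical composition, the left-hand side collapses to $(\alpha' K_1^* E_{1*})\, u$, while on the right $\gamma\inv\gamma = \id$ and a reinsertion of the (whiskered) identity $\alpha'\alpha'\inv$ turns $\Psi' \zeta_1 E_{1*} \circ \gamma'$ into $(\alpha' K_1^* E_{1*})\, u'$. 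As $\gamma$ and $\alpha' K_1^* E_{1*}$ are both isomorphisms, precomposing with $\gamma$ and postcomposing with $\alpha' K_1^* E_{1*}$ are invertible operations, so the criterion of \cref{prop:2morphisms_of_extensions} is equivalent to $u = u'$.

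Chaining the two reductions yields the claim: parallel morphisms of extensions are isomorphic $\iff$ the criterion of \cref{prop:2morphisms_of_extensions} holds $\iff$ $u = u'$ $\iff$ the associated natural transformations agree. I would close by noting that the string diagram in the statement is simply the composite $(\delta_2 K_1^*E_{1*}) (K_2^* \alpha\inv K_1^*E_{1*}) (K_2^* \Psi \zeta_1 E_{1*}) (K_2^* \gamma)$ drawn out, read bottom-to-top and left-to-right. I do not anticipate a real obstacle here; the only thing demanding care is tracking the whiskerings correctly and checking that the two simplifications of the \cref{prop:2morphisms_of_extensions} criterion are effected purely by composition with invertible 2-cells — which is exactly where the invertibility of $\alpha$, $\gamma$, $\alpha'$ and $\gamma'$ is used.
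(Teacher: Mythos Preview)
Your proposal is correct and follows essentially the same approach as the paper's proof: both invoke \cref{prop:2morphisms_of_extensions}, rearrange that criterion by composing with the invertible 2-cells $\gamma$ and $\alpha' K_1^* E_{1*}$ (equivalently, the paper multiplies on the left by $\alpha'^{-1} K_1^* E_{1*}$ and on the right by $\gamma$) to reduce to $u = u'$, and then observe that the displayed natural transformation is the mate of $u$ under $K_2^* \dashv K_2$.
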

\begin{proof}
  There is a (necessarily invertible) 2-morphism from $(\Psi, \alpha, \beta, \gamma)$ to $(\Psi', \alpha', \beta', \gamma')$ if and only if $(\alpha'\alpha\inv K_1^*E_{1*}) (\Psi \zeta_1 E_{1*}) = (\Psi' \zeta_1 E_{1*}) \gamma'\gamma\inv$.
  We can now move all the unprimed variables to the left and primed variables to the right by
  multiplying both sides of this equation on the left by $\alpha^{\prime\,-1} K_1^*E_{1*}$ and on the right by $\gamma$
  to obtain $(\alpha\inv K_1^*E_{1*}) (\Psi \zeta_1 E_{1*}) \gamma = (\alpha^{\prime\,-1} K_1^*E_{1*}) (\Psi' \zeta_1 E_{1*}) \gamma'$.
  These are the mates of the desired natural transformations with respect to the adjunction $K_2^* \dashv K_2$.
\end{proof}

\subsection{The equivalence of categories}\label{sec:equivalence}

In this section we show that the categories $\ExtOne(\H,\Nvar)$ and $\Hom(\H,\Nvar)\op$ are equivalent. This requires showing that isomorphism classes of morphisms of extensions correspond to natural transformations. We have already seen that each isomorphism class has an associated natural transformation. We will now further explore this relationship, making use of the following folklore result.

\begin{proposition}\label{prp:cocomma}
 Let $\splitext{\Nvar}{K}{\G}{E}{E_*}{\H}$ be an adjoint extension. Then the following diagram is a cocomma square.
 \begin{center}
  \begin{tikzpicture}[node distance=3.0cm, auto]
    \node (A) {$\H$};
    \node (B) [below of=A] {$\Nvar$};
    \node (C) [right of=A] {$\H$};
    \node (D) [right of=B] {$\G$};
    \draw[->] (A) to node [swap] {$K^*E_*$} (B);
    \draw[double equal sign distance] (A) to node {} (C);
    \draw[->] (B) to node [swap] {$K$} (D);
    \draw[->] (C) to node {$E_*$} (D);
    \draw[double distance=4pt,-implies] ($(B)!0.62!(C)$) to node [swap, yshift=3pt, xshift=3pt] {$\zeta E_*$} ($(B)!0.38!(C)$);
    
    \begin{scope}[shift=({D})]
        \draw[dashed] +(-0.25,0.75) -- +(-0.75,0.75) -- +(-0.75,0.25);
    \end{scope}
  \end{tikzpicture}
 \end{center}
\end{proposition}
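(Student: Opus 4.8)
\emph{Approach.} The plan is to verify directly that $\G$, equipped with the two coprojections $K\colon\Nvar\to\G$ and $E_*\colon\H\to\G$ and the $2$-cell $\zeta E_*\colon E_*\to KK^*E_*$, enjoys the (bicategorical) universal property of the cocomma object of the cospan $\Nvar\xleftarrow{K^*E_*}\H\xrightarrow{\id_\H}\H$. Concretely, for each topos $\mathcal{X}$ I want to show that the precomposition functor
\[
  \Hom(\G,\mathcal{X})\longrightarrow\mathrm{Cocone}(\mathcal{X}),\qquad M\longmapsto\bigl(MK,\;ME_*,\;M\zeta E_*\bigr),
\]
is an equivalence of categories, where $\mathrm{Cocone}(\mathcal{X})$ has objects the triples $(T,S,\psi)$ with $T\colon\Nvar\to\mathcal{X}$ and $S\colon\H\to\mathcal{X}$ in $\FLTop$ and $\psi\colon S\to TK^*E_*$ a natural transformation, and the evident morphisms: essential surjectivity is the $1$-dimensional part of the cocomma property and full faithfulness is the $2$-dimensional part. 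The one ingredient doing the real work is \cref{prop:pullback_representation_functors}, read backwards: since $\Id_\G$ is a pullback of $KK^*\to KK^*E_*E\leftarrow E_*E$, any finite-limit-preserving functor out of $\G$ is forced to be the corresponding pullback of its composites with $K$ and $E_*$.

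\emph{Essential surjectivity.} Given $(T,S,\psi)$ I define $M\colon\G\to\mathcal{X}$ on objects by the pullback
\[
  M(G)\;:=\;TK^*(G)\times_{TK^*E_*E(G)}SE(G)
\]
taken along $TK^*\theta_G$ and $\psi_{E(G)}$, with the analogous definition on morphisms. Since $K^*$ and $E$ are inverse-image functors, $T$ and $S$ preserve finite limits, and pullbacks commute with finite limits, $M$ lies in $\FLTop$. It remains to produce invertible comparison $2$-cells: $MK\cong T$ follows because $EK$ is a zero morphism (as $K$ is the $2$-kernel of $E$), so the apex and the $S$-leg of the defining pullback both collapse to terminal objects and $MK(N)\cong TK^*K(N)\cong T(N)$ via the counit $\delta$ of $K^*\dashv K$; and $ME_*\cong S$ follows because $\theta_{E_*(-)}$ is invertible ($E_*$ being fully faithful), which collapses the $T$-leg and gives $ME_*(H)\cong SEE_*(H)\cong S(H)$ via $\epsilon$. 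A diagram chase with the triangle identities then checks that these isomorphisms intertwine $M\zeta E_*$ with $\psi$, so $M$ is sent to an object isomorphic to $(T,S,\psi)$.

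\emph{Full faithfulness.} Suppose $M_1,M_2\colon\G\to\mathcal{X}$ and we are given $2$-cells $\alpha\colon M_1K\to M_2K$ and $\beta\colon M_1E_*\to M_2E_*$ that are compatible with the comparison $2$-cells, i.e.\ form a morphism of cocones. Whiskering the pullback square of \cref{prop:pullback_representation_functors} on the left by $M_1$ and $M_2$ and using the interchange law together with the compatibility of $\alpha$ and $\beta$ yields a commutative cube in $\Hom_{\FLTop}(\G,\mathcal{X})$, and the pullback property of its front face produces a unique $\gamma\colon M_1\to M_2$ with $\gamma K=\alpha$ and $\gamma E_*=\beta$. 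This is exactly the manoeuvre already carried out in \cref{prop:2morphisms_of_extensions}; applying it with $M_1$ the functor built above and $M_2$ any other candidate also shows that $M$ is unique up to unique isomorphism.

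\emph{Main obstacle and an alternative.} The only genuine difficulty is bookkeeping: tracking the coherence $2$-cells carefully enough to see that the comparison isomorphisms really carry $M\zeta E_*$ to $\psi$ and that the cube commutes strictly. An alternative that shortens this is to invoke \cref{thm:glueingequivalence}: it suffices to prove the statement for the canonical extension $\splitext{\Nvar}{\pi_{1*}}{\Gl(K^*E_*)}{\pi_2}{\pi_{2*}}{\H}$, where $\pi_1\pi_{2*}=K^*E_*$ and the cocomma structure is transparent from the description of $\Gl(F)$ (each $(N,H,\ell)$ being the obvious pullback of $\pi_{1*}(N)$, $\pi_{1*}\pi_1\pi_{2*}(H)$ and $\pi_{2*}(H)$), and then transport along the equivalence $\Phi$ of \cref{thm:glueingequivalence}, which, being a morphism of extensions, identifies $K$ with $\pi_{1*}$ and $E_*$ with $\pi_{2*}$ and hence carries one cocomma square to the other. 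I would present the direct argument and record this second route as a remark.
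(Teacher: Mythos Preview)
Your proposal is correct and takes essentially the same approach as the paper: both construct the universal functor out of $\G$ as the pullback $TK^*\times_{TK^*E_*E}SE$ coming from \cref{prop:pullback_representation_functors}, verify the comparison isomorphisms $MK\cong T$ and $ME_*\cong S$ using that $EK$ is a zero morphism and that $\theta E_*$ is invertible, and handle the $2$-dimensional part by the same cube argument (the paper does this directly rather than invoking \cref{prop:2morphisms_of_extensions}, and presents the two conditions in the opposite order, but the content is identical). Your alternative via \cref{thm:glueingequivalence} is a legitimate shortcut and not circular, though the paper does not take it.
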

\begin{proof}
  We first check the 2-categorical condition. Consider two finite-limit-preserving functors $U,V \colon \G \to \mathcal{X}$ and natural transformations $\mu \colon UE_* \to VE_*$ and $\nu \colon UK \to VK$ such that $(V \zeta E_*) \mu = (\nu K^*E_*) (U \zeta E_*)$. We must find a unique $\omega\colon U \to V$ such that $\omega E_* = \mu$ and $\omega K = \nu$.
  
  We use \cref{prop:pullback_representation_functors} to express $U$ and $V$ as pullbacks and then as in \cref{prop:2morphisms_of_extensions} we find that there is a unique map $\omega\colon U \to V$ with $\omega E_* = \mu$ and $\omega K = \nu$ as long as the following diagram commutes.
  \begin{center}
   \begin{tikzpicture}[node distance=3.2cm, auto]
    \node (A) {$U E_* E$};
    \node (B) [below of=A] {$U K K^* E_* E$};
    \node (C) [right=2.7cm of A] {$V E_* E$};
    \node (D) [below of=C] {$V K K^* E_* E$};
    \draw[->] (A) to node [swap] {$U \zeta E_* E$} (B);
    \draw[->] (A) to node {$\mu E$} (C);
    \draw[->] (B) to node [swap] {$\nu K^* E_* E$} (D);
    \draw[->] (C) to node {$V \zeta E_* E$} (D);
   \end{tikzpicture}
  \end{center}
  But commutativity of this diagram is simply the assumed condition whiskered with $E$ on the right.
  
  Now we show the 1-categorical condition. Suppose we have finite-limit-preserving functors $T_1\colon \H \to \mathcal{X}$ and $T_2\colon \Nvar \to \mathcal{X}$ and a natural transformation $\phi\colon T_1 \to T_2 K^* E_*$. We must construct a finite-limit-preserving functor $L\colon \G \to \mathcal{X}$ and natural isomorphisms $\tau_1\colon LE_* \to T_1$ and $\tau_2\colon LK \to T_2$ depicted below such that $\phi = \tau_2K^*E_* \circ L\zeta E_* \circ \tau_1\inv$.
  \begin{center}
   \begin{tikzpicture}[node distance=2.8cm, auto]
    \node (A) {$\H$};
    \node (B) [below of=A] {$\Nvar$};
    \node (C) [right of=A] {$\H$};
    \node (D) [right of=B] {$\G$};
    \draw[->] (A) to node [swap] {$K^*E_*$} (B);
    \draw[double equal sign distance] (A) to node {} (C);
    \draw[->] (B) to node [swap] {$K$} (D);
    \draw[->] (C) to node {$E_*$} (D);
    \draw[double distance=4pt,-implies] ($(B)!0.62!(C)$) to node [swap, yshift=3pt, xshift=3pt] {$\zeta E_*$} ($(B)!0.38!(C)$);
    
    \node (X) [below right=1.2cm and 1.2cm of D.center] {$\mathcal{X}$};
    \draw[bend right,->] (B) to node [swap] {$T_2$} node [anchor=center] (T') {} (X);
    \draw[bend left,->] (C) to node {$T_1$} node [anchor=center] (T) {} (X);
    \draw[dashed, ->, pos=0.55] (D) to node [swap] {$L$} (X);
    
    \draw[double distance=4pt,implies-] ($(D)!0.3!(T)$) to node [swap,pos=0.5,yshift=2pt] {$\tau_1\inv$} ($(D)!0.8!(T)$);
    \draw[double distance=4pt,-implies] ($(D)!0.3!(T')$) to node [pos=0.33] {$\tau_2$} ($(D)!0.8!(T')$);
   \end{tikzpicture}
  \end{center}
  
  Suppose we are given such a functor $L$ and natural isomorphisms $\tau_1, \tau_2$ and consider the following pullback diagram.
  \begin{center}
   \begin{tikzpicture}[node distance=3.3cm, auto]
    \node (A) {$L$};
    \node (B) [below of=A] {$L K K^*$};
    \node (C) [right of=A] {$L E_* E$};
    \node (D) [right of=B] {$L K K^* E_* E$};
    \draw[->] (A) to node [swap] {$L \zeta$} (B);
    \draw[->] (A) to node {$L \theta$} (C);
    \draw[->] (B) to node [swap] {$L K K^* \theta$} (D);
    \draw[->] (C) to node {$L \zeta E_* E$} (D);
    \begin{scope}[shift=({A})]
        \draw +(0.25,-0.75) -- +(0.75,-0.75) -- +(0.75,-0.25);
    \end{scope}
    \node (B') [below=1.5cm of B] {$T_2 K^*$};
    \node (C') [right=1.5cm of C] {$T_1 E$};
    \node (D') at (B' -| C') {$T_2K^*E_*E$};
    \draw[->] (B) to node [swap] {$\tau_2 K^*$} node [sloped] {$\sim$} (B');
    \draw[->] (C) to node {$\tau_1 E$} node [swap] {$\sim$} (C');
    \draw[->] (B') to node [swap] {$T_2K^*\theta$} (D');
    \draw[->] (C') to node {$\phi E$} (D');
    \draw[->] (D) to node [swap,pos=0.4] {\small $\tau_2 K^*E_*E$} node [sloped] {$\sim$} (D');
   \end{tikzpicture}
  \end{center}
  Here the bottom trapezium commutes by the naturality of $\tau_2 K^*$ and the right trapezium commutes since $(\phi \tau_1) E = (\tau_2K^*E_* \circ L\zeta E_*)E$ by assumption. Note that the left edge of the large square is the mate of $\tau_2$ with respect to $K^* \dashv K$ and the top edge is the mate of $\tau_1$ with respect to $E \dashv E_*$.
  
  Now without assuming $L$ exists to start with, we can use the outer pullback diagram to define it
  and we may recover $\tau_1$ and $\tau_2$ as the mates of the resulting pullback projections.
  
  Observe that precomposing the pullback with $K$ turns the right-hand edge into an isomorphism between zero morphisms. Hence the left-hand morphism $(\tau_2 K^*)(L \zeta) K$ is an isomorphism as well. Since $\tau_2$ is given by composing this with the isomorphism $T_2 \delta$, we find that $\tau_2$ is an isomorphism.
  On the other hand, precomposing the pullback with $E_*$ turns the bottom edge into an isomorphism (as $\Nvar \xhookrightarrow{E_*} \G$ is a reflective subcategory). It follows that $\tau_1$ is also an isomorphism.
  
  Finally, we show that $\phi$ be can recovered in the appropriate way.
  The commutativity of the pullback square gives $\phi E \circ \tau_1 E \circ L\theta = T_2K^*\theta \circ \tau_2 K^* \circ L\zeta$.
  The result of whiskering this with $E_*$ on the right and composing with $T_2K^*E_* \epsilon$ is depicted in the string diagram below.
  \\ \vspace{-3pt}
  \begin{equation*}
   \begin{tikzpicture}[scale=0.7,baseline={([yshift=-0.5ex]current bounding box.center)}]
    \path coordinate[dot, label=below:$\tau_1$] (mu)
    +(0,1) coordinate[dot, label=below left:$\phi$] (d)
    +(-1,-1) coordinate (mbl)
    +(1,-1) coordinate (mbr);
    \path (d)
    +(-1.25,1.25) coordinate (minvl)
    +(1.25,1.25) coordinate (minvr)
    +(0,1.25) coordinate (minvm);
    \path (mbl) ++(0,-0.5) coordinate (a) ++(-0.75,-0.75) coordinate[dot, label=above:$\theta$] (eta) ++(-0.75,0.75) coordinate (c) ++(0,3.5) coordinate (tl);
    \path (mbr) ++ (0,-2.0) coordinate[label=below:$L$] (br);
    \path (minvl) +(0,1.0) coordinate[label=above:{\vphantom{$E$}\smash{$E_*$}}] (itl);
    \path (minvr) +(0,1.0) coordinate[label=above:{\vphantom{$T$}\smash{$T_2$}}] (itr);
    \path (minvm) +(0,1.0) coordinate[label=above:$K^*$] (itm);
    \path (tl) ++(-0.75,0.75) coordinate[dot, label=below:$\epsilon$] (eps) ++(-0.75,-0.75) coordinate (tll1);
    \coordinate[label=below:$E_*$] (bll1) at (tll1 |- br);
    \draw (tl) -- (c) to[out=-90, in=180] (eta.center) to[out=0, in=-90] (a)
              -- (mbl) to[out=90, in=180] (mu) to[out=0, in=90] (mbr) -- (br)
          (mu) -- (d.center) -- (itm)
          (itl) -- (minvl) to[out=-90, in=180] (d.center) to[out=0, in=-90] (minvr.center) -- (itr)
          (bll1) -- (tll1) to[out=90, in=180] (eps) to[out=0, in=90] (tl);
    \coordinate (cornerNW) at ($(tll1 |- itl) + (-0.5,0)$);
    \coordinate (cornerSE) at ($(br -| minvr) + (0.5,0)$);
    \begin{pgfonlayer}{bg}
    \fill[\colorH] (cornerNW) rectangle (mu |- cornerSE);
    \fill[\colorGtwo] (mu |- cornerNW) rectangle (cornerSE);
    \fill[\colorGone] (bll1) -- (tll1) to[out=90, in=180] (eps.center) to[out=0, in=90] (tl)
                    -- (c) to[out=-90, in=180] (eta.center) to[out=0, in=-90] (a)
                    -- (mbl) to[out=90, in=180] (mu.center) to[out=0, in=90] (mbr)
                    -- (br) -- cycle;
    \fill[\colorGone] (itl) -- (minvl) to[out=-90, in=180] (d.center) -- (itm) -- cycle;
    \fill[\colorN] (itm) -- (d.center) to[out=0, in=-90] (minvr.center) -- (itr) -- (itm);
    \end{pgfonlayer}
    \coordinate (tmp1) at ($(itm)!0.5!(itr) - (1pt,0)$);
    \coordinate (tmp2) at ($(itm)!0.45!(d)$);
    \node (Xlabel) at (tmp1 |- tmp2) {$\mathcal{X}$};
   \end{tikzpicture}
   \enspace=\enspace
   \begin{tikzpicture}[scale=0.7,baseline={([yshift=-0.5ex]current bounding box.center)}]
    \path coordinate[dot, label=below:$\tau_2$] (mu)
    +(0,3) coordinate[label=above:{\vphantom{$T$}\smash{$T_2$}}] (d)
    +(-1,-1) coordinate (mbl)
    +(1,-1) coordinate (mbr);
    \path (mbl) ++(0,-0.5) coordinate (a) ++(-1.0,-1.0) coordinate[dot, label=above:$\zeta$] (eta) ++(-1.0,1) coordinate (c) ++(0,4.5) coordinate[label=above:$K^*$] (tl);
    \path (mbr) ++ (0,-2.25) coordinate[label=below:$L$] (br);
    \path (tl) ++(-1,0) coordinate[label=above:{\vphantom{$E$}\smash{$E_*$}}] (tll1) ++(0,-1.25) coordinate (ettr) ++(0,-0.5) coordinate (etr) ++(-0.75,-0.75) coordinate[dot,label=above:$\theta$] (eta2) ++(-0.75,0.75) coordinate (etl) ++(0,0.5) coordinate (pr) ++(-0.75,0.75) coordinate[dot,label=below:$\epsilon$] (eps) ++(-0.75,-0.75) coordinate (pl);
    \coordinate[label=below:$E_*$] (bll) at (pl |- br);
    \draw (tl) -- (c) to[out=-90, in=180] (eta.center) to[out=0, in=-90] (a)
              -- (mbl) to[out=90, in=180] (mu.center) to[out=0, in=90] (mbr) -- (br)
          (mu.center) -- (d)
          (tll1) -- (etr) to[out=-90, in=0] (eta2.center) to[out=180, in=-90] (etl) -- (pr) to[out=90, in=0] (eps.center) to[out=180, in=90] (pl) -- (bll);
    \coordinate (cornerNW) at ($(d -| pl) + (-0.5,0)$);
    \coordinate (cornerSE) at ($(br) + (0.5,0)$);
    \begin{pgfonlayer}{bg}
    \fill[\colorN] (tl) rectangle (mu.center |- eta.center);
    \fill[\colorH] (cornerNW) rectangle (cornerSE -| tll1);
    \fill[\colorGtwo] (mu.center |- cornerNW) rectangle (cornerSE);
    \fill[\colorGone] (bll) -- (pl) to[out=90, in=180] (eps.center) to[out=0, in=90] (pr)
                    -- (etl) to[in=180, out=-90] (eta2.center) to[in=-90, out=0] (etr) -- (ettr) -- (tll1) 
                    -- (tl) -- (c) to[out=-90, in=180] (eta.center) to[out=0, in=-90] (a)
                    -- (mbl) to[out=90, in=180] (mu.center) to[out=0, in=90] (mbr)
                    -- (br) -- cycle;
    \end{pgfonlayer}
    \coordinate (tmp1) at ($(tl)!0.5!(d)$);
    \coordinate (tmp2) at ($(tl)!0.55!(mu)$);
    \node (Xlabel) at (tmp1 |- tmp2) {$\mathcal{X}$};
   \end{tikzpicture}
  \end{equation*}
  The desired equality follows after using the triangle identities to `pull the wires straight'.
\end{proof}

\begin{proposition}\label{prp:natmorphismassociation}
    Let $A$ and $B$ denote the adjoint extensions $\splitext{\Nvar}{K_1}{\G_1}{E_1}{E_{1*}}{\H}$ and $\splitext{\Nvar}{K_2}{\G_2}{E_2}{E_{2*}}{\H}$ respectively.
    There is a bijection between the equivalence classes of morphisms of extensions  $\Hom(A,B)$ and the natural transformations $\Hom(K_2^*E_{2*}, K_1^*E_{1*})$.
\end{proposition}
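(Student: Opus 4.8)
The plan is to reduce everything to the construction of \cref{lem:associatednat}. That lemma already gives a map from the isomorphism classes of morphisms of extensions to $\Hom(K_2^*E_{2*}, K_1^*E_{1*})$, sending the class of $(\Psi,\alpha,\beta,\gamma)$ to $(\delta_2 K_1^*E_{1*})(K_2^*\alpha\inv K_1^*E_{1*})(K_2^*\Psi\zeta_1 E_{1*})(K_2^*\gamma)$, and the ``only if'' direction of that lemma shows this map is injective. So the entire content of the proposition is \emph{surjectivity}, and the task is to manufacture, from an arbitrary natural transformation $\phi\colon K_2^*E_{2*}\to K_1^*E_{1*}$, a morphism of extensions that is sent to it.

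The key tool is the cocomma description of \cref{prp:cocomma}. Applied to the extension $A$ it exhibits $\G_1$ as the cocomma object of $K_1^*E_{1*}\colon\H\to\Nvar$ and $\id_\H\colon\H\to\H$. I would feed its one-dimensional universal property the finite-limit-preserving functors $T_1 := E_{2*}\colon\H\to\G_2$ and $T_2 := K_2\colon\Nvar\to\G_2$ together with the natural transformation
\[
\phi' := (K_2\phi)\,(\zeta_2 E_{2*})\colon E_{2*}\longrightarrow K_2K_2^*E_{2*}\longrightarrow K_2K_1^*E_{1*},
\]
obtaining a finite-limit-preserving functor $\Psi\colon\G_1\to\G_2$ and natural isomorphisms $\tau_1\colon\Psi E_{1*}\to E_{2*}$ and $\tau_2\colon\Psi K_1\to K_2$ satisfying $\phi' = (\tau_2 K_1^*E_{1*})(\Psi\zeta_1 E_{1*})\tau_1\inv$.

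Setting $\gamma := \tau_1\inv$ and $\alpha := \tau_2\inv$ (which point in precisely the directions demanded by \cref{def:morphism_of_extensions}), \cref{lmm:betareduction} furnishes a unique $\beta$ making $(\Psi,\alpha,\beta,\gamma)$ a morphism of extensions. It remains to verify that this morphism is sent to $\phi$. Substituting $\alpha\inv = \tau_2$ and $\gamma = \tau_1\inv$ into the formula above, factoring $K_2^*$ out of its three rightmost factors, and using $\phi' = (\tau_2 K_1^*E_{1*})(\Psi\zeta_1 E_{1*})\tau_1\inv$ collapses it to $(\delta_2 K_1^*E_{1*})(K_2^*\phi')$; then expanding $\phi' = (K_2\phi)(\zeta_2 E_{2*})$, sliding $\phi$ past $\delta_2$ by naturality of the counit, and cancelling $(\delta_2 K_2^*E_{2*})(K_2^*\zeta_2 E_{2*}) = \id_{K_2^*E_{2*}}$ via a triangle identity for $K_2^*\dashv K_2$ leaves exactly $\phi$. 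This establishes surjectivity and hence the bijection.

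The main obstacle I anticipate is organisational rather than conceptual: keeping straight the directions of $\tau_1$ and $\tau_2$ so that $\alpha$ and $\gamma$ genuinely have the sources and targets required by \cref{def:morphism_of_extensions}, and checking that the functor $\Psi$ produced inside the proof of \cref{prp:cocomma} really lies in $\FLTop$ so that \cref{lmm:betareduction} applies. Once this is arranged, the final identification of the associated natural transformation with $\phi$ is a short whiskering-and-triangle-identity manipulation (conveniently a one-line string-diagram computation).
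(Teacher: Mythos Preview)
Your proposal is correct and follows essentially the same route as the paper: your $\phi' = (K_2\phi)(\zeta_2 E_{2*})$ is precisely the mate $\overline{\psi}$ of $\psi$ with respect to $K_2^*\dashv K_2$ that the paper feeds into the cocomma property of \cref{prp:cocomma}, and the rest of the argument (invoking \cref{lmm:betareduction} for $\beta$, then checking the round-trip) matches the paper's, though you spell out the final triangle-identity verification that the paper leaves implicit under ``uniqueness of the universal property.'' Your stated obstacles are not genuine: the cocomma construction in \cref{prp:cocomma} already produces a finite-limit-preserving $\Psi$, and the orientations of $\tau_1,\tau_2$ line up with \cref{def:morphism_of_extensions} exactly as you describe.
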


\begin{proof}
    Let $\psi\colon K_2^*E_{2*} \to K_1^*E_{1*}$ be a natural transformation and consider its mate natural transformation $\overline{\psi}\colon E_{2*} \to K_2K_1^*E_{1*}$ with respect to the adjunction $K_2^* \dashv K_2$.
    Notice that $\overline{\psi}$ determines a 2-cocone in the following cocomma object diagram.
    
    \begin{center} 
      \begin{tikzpicture}[node distance=3.0cm, auto]
        \node (A) {$\H$};
        \node (B) [below of=A] {$\Nvar$};
        \node (C) [right of=A] {$\H$};
        \node (D) [right of=B] {$\G_1$};
        \draw[->] (A) to node [swap] {$K_1^*E_{1*}$} (B);
        \draw[double equal sign distance] (A) to node {} (C);
        \draw[->] (B) to node {$K_1$} (D);
        \draw[->] (C) to node {$E_{1*}$} (D);
        
        \begin{scope}[shift=({D})]
            \draw[dashed] +(-0.25,0.75) -- +(-0.75,0.75) -- +(-0.75,0.25);
        \end{scope}
        
         \draw[double distance=4pt,-implies] ($(B)!0.62!(C)$) to node [swap] {$\zeta E_{1*}$} ($(B)!0.38!(C)$);
        
        \node (X) [below right=1.2cm and 1.2cm of D.center] {$\G_2$};
        \draw[bend right,->] (B) to node [swap] {$K_2$} node [anchor=center] (T') {} (X);
        \draw[bend left,->] (C) to node {$E_{2*}$} node [anchor=center] (T) {} (X);
        \draw[dashed, ->, pos=0.55] (D) to node {$\Psi$} (X);
        
        \draw[double distance=4pt,-implies,shorten <=2.5pt,shorten >=1pt] (T) to node [swap] {$\gamma$} (D);
        \draw[double distance=4pt,-implies,shorten <=2.5pt,shorten >=1pt] (D) to node [swap] {$\alpha\inv$} (T');
        \end{tikzpicture}
    \end{center}
    
    By the universal property of the cocomma, we get a map $\Psi \colon \G_1 \to \G_2$ and natural isomorphisms $\alpha \colon K_2 \to \Psi K_1$ and $\gamma \colon E_{2*} \to \Psi E_{1*}$. By \cref{lmm:betareduction} we can derive a unique natural isomorphism $\beta \colon E_2\Psi \to E_1$ such that $(\Psi,\alpha,\beta,\gamma)$ is a morphism of extensions.
    
    For the other direction we begin with a morphism of extensions $(\Psi,\alpha,\beta,\gamma)$ and form the pasting diagram above. We may consider the composite natural transformation \[\overline{\psi} = \alpha\inv K_1^*E_{1*} \circ \Psi\zeta E_{1*} \circ \gamma \colon E_{2*} \to K_2 K_1^*E_{1*}.\] Again we may use the adjunction $K_2^* \dashv K_2$ to arrive at the natural transformation \[\delta_2K^*_{1}E_{1*} \circ K_2^*\alpha\inv K_1^*E_{1*} \circ K_2^*\Psi\zeta E_{1*} \circ K_2^*\gamma \colon K_2^*E_{2*} \to K_1^*E_{1*},\] 
    where $\delta_2\colon K_2^*K_2 \to \Id_\Nvar$ is the counit of the adjunction.
    
    It is clear that these processes are inverses by the uniqueness of the universal property (bearing in mind that the morphisms in $\ExtOne(\H,\Nvar)$ are isomorphism classes).
\end{proof}

\begin{remark}
    Notice that the natural transformation associated to a morphism of extensions $(\Psi,\alpha,\beta,\gamma)$ in the above proof is precisely the one described in \cref{lem:associatednat}.
\end{remark}

\begin{corollary}\label{cor:hom_bijection2}
    Let $F_1, F_2\colon \H \to \Nvar$ be finite-limit-preserving functors and let \[\Gamma_1 = \splitext{\Nvar}{\pi_{1*}^{F_1}}{\Gl(F_1)}{\pi_2^{F_1}}{\pi_{2*}^{F_1}}{\H} \quad\text{and}\quad \Gamma_2 = \splitext{\Nvar}{\pi_{1*}^{F_2}}{\Gl(F_2)}{\pi_2^{F_2}}{\pi_{2*}^{F_2}}{\H}\] be the corresponding glueing extensions. Then $\Hom(F_2,F_1) \cong \Hom(\Gamma_1,\Gamma_2)$.
\end{corollary}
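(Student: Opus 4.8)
The plan is to obtain this statement as a direct consequence of \cref{prp:natmorphismassociation}. By \cref{prop:glueingisadjoint}, each $\Gamma_i$ is a genuine adjoint split extension of $\H$ by $\Nvar$, with $2$-kernel $K_i = \pi_{1*}^{F_i}$, $2$-cokernel $E_i = \pi_2^{F_i}$ and splitting $E_{i*} = \pi_{2*}^{F_i}$; in particular $\Gamma_1$ and $\Gamma_2$ have the same kernel and cokernel objects, so that $\Hom(\Gamma_1,\Gamma_2)$ (isomorphism classes of morphisms of extensions) is well defined. The first step is to identify the reflector $K_i^*$ appearing in \cref{prp:natmorphismassociation}: since $\pi_1^{F_i} \dashv \pi_{1*}^{F_i}$ and $\pi_{1*}^{F_i}$ is fully faithful (being a $2$-kernel, cf.\ \cref{prp:kernelfullsubcat}), its left adjoint $\pi_1^{F_i}$ is precisely the reflector onto the corresponding closed subtopos. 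Hence $K_i^* E_{i*} = \pi_1^{F_i}\,\pi_{2*}^{F_i}$.

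The second step is to invoke the identity $\pi_1 \pi_{2*} = F$, valid for the glueing of any finite-limit-preserving functor $F$ (recalled in \cref{sec:adjoint_extensions}). Applied to $F_1$ and $F_2$ this gives $K_1^* E_{1*} = F_1$ and $K_2^* E_{2*} = F_2$, on the nose, so no further coherence data needs to be tracked.

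Finally, \cref{prp:natmorphismassociation} with $A = \Gamma_1$ and $B = \Gamma_2$ yields a bijection $\Hom(\Gamma_1,\Gamma_2) \cong \Hom(K_2^* E_{2*}, K_1^* E_{1*})$, and substituting the two identifications above turns the right-hand side into $\Hom(F_2,F_1)$, which is exactly the claimed bijection. I do not expect any real obstacle here: the corollary is essentially a specialisation of \cref{prp:natmorphismassociation}, and the only points meriting a line of comment are the identification of the kernel's reflector with $\pi_1^{F_i}$ and the bookkeeping of variances — namely that $\Gamma_1$ occupies the domain slot $A$ of \cref{prp:natmorphismassociation} whereas $F_1$ reappears as the codomain of the associated natural transformations, which is why the bijection is with $\Hom(F_2,F_1)$ rather than $\Hom(F_1,F_2)$.
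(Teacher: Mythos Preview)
Your proposal is correct and follows essentially the same approach as the paper: both arguments apply \cref{prp:natmorphismassociation} with $A=\Gamma_1$, $B=\Gamma_2$ and then substitute the identity $\pi_1^{F_i}\pi_{2*}^{F_i}=F_i$ to rewrite $\Hom(K_2^*E_{2*},K_1^*E_{1*})$ as $\Hom(F_2,F_1)$. The paper's proof is simply terser, taking the identification $K_i^*=\pi_1^{F_i}$ for granted rather than justifying it via the adjunction as you do.
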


\begin{proof}
  Since $\pi_1^{F_1}\pi_{2*}^{F_1} = F_1$ and $\pi_1^{F_2}\pi_{2*}^{F_2} = F_2$, the above proposition implies that $\Hom(\Gamma_1,\Gamma_2) \cong \Hom(F_2,F_1)$, as required.
\end{proof}

We are now ready to show that the categories $\ExtOne(\H,\Nvar)$ and $\Hom(\H,\Nvar)\op$ are equivalent.
(A similar result can also be obtained from the results of \cite{rosebrugh1984cofibrations}.)

\begin{definition}
Let $\Gamma_{\H,\Nvar}\colon \Hom(\H,\Nvar)\op \to \ExtOne(\H,\Nvar)$ be the functor sending $F \colon \H \to \Nvar$ to the extension $\splitext{\Nvar}{\pi_{1*}}{\Gl(F)}{\pi_2}{\pi_{2*}}{\H}$ and sending natural transformations to the associated morphism of extensions described in \cref{cor:hom_bijection2}. 
\end{definition}

\begin{theorem}\label{thm:gammaequivalence}
The functor $\Gamma_{\H,\Nvar} \colon \Hom(\H,\Nvar)\op \to \ExtOne(\H,\Nvar)$ is a part of an equivalence.

An inverse $\Gamma_{\H,\Nvar}\inv$ sends an extension $\splitext{\Nvar}{K}{\G}{E}{E_*}{\H}$ to $K^*E_*$ and a morphism of extensions to the natural transformation described in \cref{lem:associatednat,prp:natmorphismassociation}. For the adjunction $\Gamma_{\H,\Nvar}\inv \dashv \Gamma_{\H,\Nvar}$ we take the counit to be the identity and the unit to be given by the isomorphisms described in \cref{thm:glueingequivalence}.
\end{theorem}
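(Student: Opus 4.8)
The plan is to exhibit $\Gamma_{\H,\Nvar}$ and the prescribed $\Gamma_{\H,\Nvar}\inv$ as mutually quasi-inverse functors, and then to verify that with the stated unit and counit they assemble into an adjoint equivalence. First I would check that $\Gamma_{\H,\Nvar}\inv$ is a well-defined functor $\ExtOne(\H,\Nvar) \to \Hom(\H,\Nvar)\op$: on objects, $K^*E_*$ is a composite of finite-limit-preserving functors; on morphisms it is the natural transformation of \cref{lem:associatednat}, which by that lemma depends only on the isomorphism class of the morphism of extensions and so descends to $\ExtOne(\H,\Nvar)$. Preservation of identities is a one-line string-diagram collapse, and preservation of composition --- the single genuinely computational step --- is obtained by pasting the string-diagram formula of \cref{lem:associatednat} for two composable morphisms of extensions and simplifying with the triangle identities. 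By \cref{prp:natmorphismassociation} (and the remark after it, which identifies its bijection with the action of $\Gamma_{\H,\Nvar}\inv$) this functor is bijective on each hom-set, hence full and faithful; it is essentially surjective because $\Gamma_{\H,\Nvar}(F)$ is an adjoint split extension by \cref{prop:glueingisadjoint} and $\Gamma_{\H,\Nvar}\inv(\Gamma_{\H,\Nvar}(F)) = \pi_1\pi_{2*} = F$ for every $F$. Thus $\Gamma_{\H,\Nvar}\inv$ is an equivalence.

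Since $\Gamma_{\H,\Nvar}$ is defined on objects by $F \mapsto \Gl(F)$ and on morphisms by precisely the inverse bijection supplied by \cref{cor:hom_bijection2}, a standard argument (for a full and faithful $\Gamma_{\H,\Nvar}\inv$, any such choice of inverse on hom-sets is automatically functorial and satisfies $\Gamma_{\H,\Nvar}\inv\Gamma_{\H,\Nvar} = \Id$) shows that $\Gamma_{\H,\Nvar}$ is a functor and that $\Gamma_{\H,\Nvar}\inv\Gamma_{\H,\Nvar}$ is \emph{literally} the identity on $\Hom(\H,\Nvar)\op$; in particular the identity natural transformation is a legitimate counit. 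For the unit I would take $\eta_A = (\Phi,\alpha,\beta,\gamma) \colon A \to \Gl(K^*E_*) = \Gamma_{\H,\Nvar}\Gamma_{\H,\Nvar}\inv(A)$ to be the equivalence of extensions built in \cref{thm:glueingequivalence}, which is invertible in $\ExtOne(\H,\Nvar)$. With the identity counit, the two triangle identities reduce to the pointwise assertions (i) $\eta_{\Gamma_{\H,\Nvar}(F)} = \id$ for every $F$ and (ii) $\Gamma_{\H,\Nvar}\inv(\eta_A) = \id_{K^*E_*}$ for every extension $A$. For (i): when $A$ is already a glueing extension, substituting $K^* = \pi_1$, $E = \pi_2$ and $\theta'_{(N,H,\ell)} = (\ell,\id_H)$ into the formula $G \mapsto (K^*G, EG, K^*\theta_G)$ for $\Phi$ yields $\Phi(N,H,\ell) = (N,H,\ell)$, while $\alpha$, $\beta$, $\gamma$ are all identities, so $\eta_{\Gamma_{\H,\Nvar}(F)}$ is the identity morphism of extensions. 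For (ii): substituting the explicit $\beta = \id$, $\gamma = (\id_{K^*E_*}, \epsilon\inv)$ and $\alpha = (\id,!)\circ\pi_{1*}\delta\inv$ from \cref{thm:glueingequivalence} into the string-diagram formula of \cref{lem:associatednat}, the triangle identities for $E \dashv E_*$ and $K^* \dashv K$ collapse it to $\id_{K^*E_*}$.

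Naturality of $\eta$ then follows formally: for any $m \colon A \to B$ in $\ExtOne(\H,\Nvar)$, applying the faithful functor $\Gamma_{\H,\Nvar}\inv$ to $\Gamma_{\H,\Nvar}\Gamma_{\H,\Nvar}\inv(m) \circ \eta_A$ and to $\eta_B \circ m$ yields, using $\Gamma_{\H,\Nvar}\inv\Gamma_{\H,\Nvar} = \Id$ and (ii), the same morphism $\Gamma_{\H,\Nvar}\inv(m)$ in $\Hom(\H,\Nvar)\op$, so the two composites agree in $\ExtOne(\H,\Nvar)$. Together with the triangle identities this exhibits $\Gamma_{\H,\Nvar}\inv \dashv \Gamma_{\H,\Nvar}$ as an adjoint equivalence with the claimed unit and counit.

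I expect the main obstacle to be purely the bookkeeping of two string-diagram computations --- that $\Gamma_{\H,\Nvar}\inv$ respects composition of morphisms of extensions, and that $\Gamma_{\H,\Nvar}\inv(\eta_A)$ reduces to the identity. Everything else (the equivalence itself, essential surjectivity, the reduction of the triangle identities, the naturality of $\eta$) is formal once \cref{thm:glueingequivalence}, \cref{prp:natmorphismassociation} and \cref{lem:associatednat} are in hand.
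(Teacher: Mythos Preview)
Your proposal is correct and rests on the same three ingredients as the paper's proof: the on-the-nose equality $\Gamma_{\H,\Nvar}\inv\Gamma_{\H,\Nvar} = \Id$, the hom-set bijection of \cref{prp:natmorphismassociation}, and the computation $\Gamma_{\H,\Nvar}\inv(\Phi) = \id_{K^*E_*}$ (your step (ii)). The organisation differs, however. You verify by hand that $\Gamma_{\H,\Nvar}\inv$ is a functor, that $\eta$ is natural, and that both triangle identities hold. The paper instead invokes the universal-arrow characterisation of adjunctions: once one knows that for each extension $A$ and each $\psi$ there is a \emph{unique} $\Psi\colon A \to \Gamma_{\H,\Nvar}(F)$ with $\Gamma_{\H,\Nvar}\inv(\Psi) = \psi$ (which is exactly the bijection of \cref{prp:natmorphismassociation}), the left adjoint, its functoriality, and the naturality of the unit all come for free; the only remaining task is to identify the unit with $\Phi$, which is your (ii). This buys the paper a shorter argument, while your route is more self-contained. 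Note also that your step (i) is redundant: since $\Gamma_{\H,\Nvar}\inv$ is faithful and $\Gamma_{\H,\Nvar}\inv(\eta_{\Gamma(F)}) = \id_F = \Gamma_{\H,\Nvar}\inv(\id_{\Gamma(F)})$ by (ii), the equality $\eta_{\Gamma(F)} = \id$ follows immediately.
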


\begin{proof}
  Note that $\Gamma_{\H,\Nvar}\inv\Gamma_{\H,\Nvar} = \Id_{\Hom(\H,\Nvar)\op}$.
  We see that $\Gamma_{\H,\Nvar}\inv \dashv \Gamma_{\H,\Nvar}$ with the identity as the counit, since for each natural transformation $\psi\colon F \to \Gamma\inv(A)$, there is a unique map $\Psi\colon A \to \Gamma_{\H,\Nvar}(F)$ such that $\Gamma_{\H,\Nvar}\inv(\Psi) = \psi$, namely the image of $\psi$ under the inverse of
  the bijection \[\Hom(A,\Gamma_{\H,\Nvar}(F)) \cong \Hom(\Gamma_{\H,\Nvar}\inv\Gamma_{\H,\Nvar}(F),\Gamma_{\H,\Nvar}\inv(A)) = \Hom(F,\Gamma_{\H,\Nvar}\inv(A))\] from \cref{prp:natmorphismassociation}.
  
  Let $A$ denote the extension $\splitext{\Nvar}{K}{\G}{E}{E_*}{\H}$. It remains to show that the isomorphism of extensions $\Phi = (\Phi, \alpha, \beta, \gamma)\colon A \to \Gamma_{\H,\Nvar}(K^*E_*)$ described in \cref{thm:glueingequivalence} is the component of the unit at $A$.
  
  It suffices to show that the isomorphism $\Hom(A,\Gamma_{\H,\Nvar}\Gamma_{\H,\Nvar}\inv(A)) \cong \Hom(\Gamma_{\H,\Nvar}\inv(A),\Gamma_{\H,\Nvar}\inv(A))$ maps $\Phi$ to the identity --- that is, that $\Gamma_{\H,\Nvar}\inv(\Phi) = \id_{K^*E_*}$.
  As in \cref{lem:associatednat} we have $\Gamma_{\H,\Nvar}\inv(\Phi) = \delta' K^*E_* \circ \pi_1 \alpha\inv K^*E_* \circ \pi_1 \Phi \zeta E_* \circ \pi_1 \gamma$. Now recall that $\delta'\colon \pi_1 \pi_{1*} \to \Id_\Nvar$ is the identity, $\alpha = (\id,!) \circ \pi_{1*}\delta\inv$ and $\gamma = (\id_{K^*E_*}, \epsilon\inv)$. Thus the expression can be seen to simplify to $\delta K^*E_* \circ K^* \zeta E_*$, which in turn is $\id_{K^*E_*}$ by the triangle identity for $K^* \dashv K$.
  
  Finally, $\Gamma_{\H,\Nvar}$ and $\Gamma_{\H,\Nvar}\inv$ form an equivalence since the unit and counit are isomorphisms.
\end{proof}

With this in mind we may now consider the full subcategory of $\ExtOne(\H,\Nvar)$ whose objects are only those extensions of the form $\splitext{\Nvar}{\pi_{1*}}{\Gl(F)}{\pi_2}{\pi_{2*}}{\H}$ for some $F \colon \H \to \Nvar$. It is evident that this full subcategory is equivalent to $\ExtOne(\H,\Nvar)$ and for the remainder of the paper we will choose to perform calculations in this subcategory for simplicity. We will discuss how this can be done coherently when we investigate the $\ExtOne$ 2-functor in \cref{sec:ext_functor}.

We can now give a concrete description of the behaviour of morphisms of extensions.
Suppose that $(\Psi,\alpha,\beta,\gamma) \colon \Gamma_{\H,\Nvar}(F_1) \to \Gamma_{\H,\Nvar}(F_2)$ is a morphism of adjoint extensions as in the following diagram.

\begin{center}
    \begin{tikzpicture}[node distance=3.0cm, auto]
    \node (A) {$\Nvar$};
    \node (B) [right of=A] {$\Gl(F_1)$};
    \node (C) [right of=B] {$\H$};
    \node (D) [below of=A] {$\Nvar$};
    \node (E) [right of=D] {$\Gl(F_2)$};
    \node (F) [right of=E] {$\H$};
    \draw[->] (A) to node {$\pi_{1*}^{F_1}$} (B);
    \draw[transform canvas={yshift=0.5ex},->] (B) to node {$\pi_2^{F_1}$} (C);
    \draw[transform canvas={yshift=-0.5ex},->] (C) to node {$\pi_{2*}^{F_1}$} (B);
    \draw[->] (D) to node [swap] {$\pi_{1*}^{F_2}$} (E);
    \draw[transform canvas={yshift=0.5ex},->] (E) to node {$\pi_2^{F_2}$} (F);
    \draw[transform canvas={yshift=-0.5ex},->] (F) to node {$\pi_{2*}^{F_2}$} (E);
    \draw[->] (B) to node [swap] {$\Psi$} (E);
    \draw[double equal sign distance] (A) to (D);
    \draw[double equal sign distance] (C) to (F);
   \end{tikzpicture}
\end{center}

Since $\alpha_N \colon (N,1,!) \to \Psi(N,1,!)$ is an isomorphism, we have $\Psi(N,1,!) = (\overline{N},\overline{1},!)$ for some $\overline{N} \cong N$ and some terminal object $\overline{1}$. For simplicity, we will assume $\overline{1} = 1$ without any loss of generality.
Similarly $\gamma_H \colon (F_2(H),H,\id) \to \Psi(F_1(H),H,\id)$ is an isomorphism. So if $\Psi(F_1(H),H,\id) = (N_H,\overline{H},t_H)$ then $t_H\colon N_H \to F_2(\overline{H})$ is an isomorphism.

Since $\Psi$ preserves finite limits, we can use \cref{prop:pullback_representation_objects} to completely determine its behaviour. Every object in $\Gl(F_1)$ can be written as the following pullback diagram, where objects in the category are represented by the green arrows pointing out of the page and the pullback symbol has elongated into a wedge.

 \begin{center}
   \begin{tikzpicture}[node distance=3.5cm, auto, on grid,
    cross line/.style={preaction={draw=white, -,line width=6pt}},
    small cross line/.style={preaction={draw=white, -,line width=4pt}}
    ]
    
    \node [fill=white,inner sep=1pt, outer sep=2pt] (A) {$F_1(H)$};
    \node [right of=A] (B) {$F_1(H)$};
    \node [below of=A] (C) {$1$};
    \node [right of=C] (D) {$1$};
    
    \begin{scope}[shift=({A})]
        \draw [draw=black!50] +(0.25,-0.75) coordinate (X) -- +(0.75,-0.75) coordinate (Y) -- +(0.75,-0.25) coordinate (Z);
    \end{scope}
    
    \node [above left=1.1cm and 1.5cm of A.center] (A1) {$N$};
    \node [right of=A1] (B1) {$F_1(H)$};
    \node [below of=A1] (C1) {$N$};
    \node [right of=C1] (D1) {$F_1(H)$};
    
    \begin{scope}[shift=({A1})]
        \draw [draw=black!40] +(0.22,-0.78) coordinate (X1) -- +(0.78,-0.78) coordinate (Y1) -- +(0.78,-0.22) coordinate (Z1);
    \end{scope}
    
    \begin{pgfonlayer}{bg}
    \draw[draw=black!40] (X) to node {} (X1);
    \draw[draw=black!40,dashed] (Y) to node {} (Y1);
    \draw[draw=black!40] (Z) to node {} (Z1);
    \end{pgfonlayer}
    
    \draw[small cross line, ->] (A1) to node {$\ell$} (B1);
    \draw[double equal sign distance] (A1) to node [yshift=9pt, xshift=-2pt] {} (C1);
    \draw[double equal sign distance] (B1) to node {} (D1);
    \draw[->] (C1) to node [swap, pos=0.75] {$\ell$} (D1);
    
    \draw[cross line, double equal sign distance] (A) to node [swap, yshift=1pt, xshift=-5pt] {} (B);
    \draw[cross line, ->] (A) to node [swap, yshift=8pt] {$!$} (C);
    \draw[cross line, ->] (B) to node {$!$} (D);
    \draw[cross line, double equal sign distance] (C) to node {} (D);
    
    \draw[globjects, small cross line, ->] (A1) to node [xshift=2pt,yshift=-1pt, fill=white,inner sep=1.3pt] {$\ell$} (A);
    \draw[globjects, double equal sign distance] (B1) to node [swap] {} (B);
    \draw[globjects, ->] (C1) to node [xshift=-1pt,yshift=-1pt] {$!$} (C);
    \draw[globjects, ->] (D1) to node [xshift=-4pt,yshift=2pt] {$!$} (D);
   \end{tikzpicture}
 \end{center}
 
 Note that the front and back faces are pullback squares in $\Nvar$ and the other faces correspond to morphisms in $\Gl(F_1)$.
 
 We may now study how $\Psi$ acts on this pullback diagram.
 Observe that the bottom face corresponds to the morphism $\pi_{1*}^{F_1}(\ell)$ where $\ell\colon N \to F_1(H)$. It is then sent by $\Psi$ to the morphism represented in the diagram below.
 
  \begin{center}
  \begin{tikzpicture}[node distance=3.25cm, auto]
    \node (A) {$\overline{N}$};
    \node (B) [below of=A] {$1$};
    \node (C) [right of=A] {$\overline{F_1(H)}$};
    \node (D) [right of=B] {$1$};
    \draw[globjects,->] (A) to node [swap] {$!$} (B);
    \draw[->] (A) to node {$\pi_1\Psi \pi_{1*}^{F_1}(\ell)$} (C);
    \draw[double equal sign distance] (B) to node [swap] {} (D);
    \draw[globjects,->] (C) to node {$!$} (D);
  \end{tikzpicture}
 \end{center} 
 
 The right face is the unit $\zeta_{\pi_{2*}^{F_1}(H)}\colon \pi_{2*}^{F_1}(H) \to \pi_{1*}^{F_1}F_1(H)$. 
 Thus we have that $\Psi$ sends this face to the following commutative square.
 \begin{center}
  \begin{tikzpicture}[node distance=3.25cm, auto]
    \node (A) {$N_H$};
    \node (B) [below of=A] {$\overline{F_1(H)}$};
    \node (C) [right of=A] {$F_2(\overline{H})$};
    \node (D) [right of=B] {$1$};
    \draw[->] (A) to node [swap] {\llap{$\pi_1\Psi(\zeta_{\pi_{2*}^{F_1}(H)})$}} (B);
    \draw[globjects,->] (A) to node {$t_H$} (C);
    \draw[globjects,->] (B) to node [swap] {$!$} (D);
    \draw[->] (C) to node {$!$} (D);
  \end{tikzpicture}
 \end{center}
 
 The pullback of these two faces will then give the image of $(N,H,\ell)$ under $\Psi$.
 The pullback diagram is given by the large cuboid in the diagram below.
 Here we have factored this pullback as in the similar pullback diagram in the proof of \cref{prp:cocomma}.
 
 \begin{center}
   \begin{tikzpicture}[node distance=4.8cm, auto, on grid,
    cross line/.style={preaction={draw=white, -,line width=6pt}},
    small cross line/.style={preaction={draw=white, -,line width=4pt}}
    ]
    
    \node [fill=white,inner sep=1pt, outer sep=2pt] (A) {$F_2(H)$};
    \node [right of=A] (B) {$F_2(H)$};
    \node [below of=A] (C) {$1$};
    \node [right of=C] (D) {$1$};
    
    \begin{scope}[shift=({A})]
        \draw [draw=black!50] +(0.22,-0.78) coordinate (X) -- +(0.78,-0.78) coordinate (Y) -- +(0.78,-0.22) coordinate (Z);
    \end{scope}
    
    \node [above left=1.5cm and 2.0cm of A] (A1) {$N \times_{F_1(H)} F_2(H)$};
    \node [right of=A1] (B1) {$F_2(H)$};
    \node [below of=A1] (C1) {$N$};
    \node [right of=C1] (D1) {$F_1(H)$};
    
    \begin{scope}[shift=({A1})]
        \draw [draw=black!40] +(0.25,-0.75) coordinate (X1) -- +(0.75,-0.75) coordinate (Y1) -- +(0.75,-0.25) coordinate (Z1);
    \end{scope}
    
    \node [below of=C1] (S1) {$\overline{N}$};
    \node [below of=C] (S2) {$1$};
    \node [below of=D1] (S3) {$\overline{F_1(H)}$};
    \node [below of=D] (S4) {$1$};
    
    \node [right of=B1] (S1') {$N_H$};
    \node [right of=B] (S2') {$F_2(\overline{H})$};
    \node [right of=D1] (S3') {$\overline{F_1(H)}$};
    \node [right of=D] (S4') {$1$};
    
    \node [right of=S3] (S3'') {$\overline{F_1(H)}$};
    \node [right of=S4] (S4'') {$1$};
    
    \draw[globjects,->] (S1) to node {$!$} (S2);
    \draw[globjects,->] (S3) to node {$!$} (S4);
    \draw[globjects,->] (S3'') to node {$!$} (S4'');
    \draw[globjects,->] (S3') to node {$!$} (S4');
    \draw[globjects,->] (S1') to node {$t_H$} (S2');
    
    \draw[->] (C1) to node [swap,pos=0.6] {$\pi_1(\alpha_N)$} node [sloped,pos=0.6] {$\sim$} (S1);
    \draw[->] (D1) to node [swap,pos=0.6] {$\pi_1(\alpha_{F_1(H)})$} node [sloped,pos=0.6] {$\sim$} (S3);
    \draw[->] (D1) to node [swap,pos=0.7] {$\pi_1(\alpha_{F_1(H)})$} node [sloped,pos=0.7] {$\sim$} (S3');
    \draw[->] (B1) to node {$\pi_1(\gamma_H)$} node [swap,sloped] {$\sim$} (S1');
    
    \draw[double equal sign distance] (S3) to node {} (S3'');  
    \draw[double equal sign distance] (S4) to node {} (S4'');
    \draw[double equal sign distance] (S3') to node {} (S3'');  
    \draw[double equal sign distance] (S4') to node {} (S4'');
    
    \draw[->] (S1) to node [pos=0.75,swap]{$\pi_1\Psi\pi_{1*}^{F_1}(\ell)$} (S3);
    \draw[double equal sign distance] (S2) to node {} (S4);
    \draw[->] (S1') to node [pos=0.6,swap] {$\pi_1\Psi(\zeta_{\pi_{2*}(H)})$} (S3');
    \draw[->] (S2') to node {$!$} (S4');
    
    \draw[cross line, double equal sign distance] (C) to node {} (S2);
    \draw[cross line, double equal sign distance] (D) to node {} (S4);
    \draw[cross line, double equal sign distance] (D) to node {} (S4');
    \draw[cross line, ->] (B) to node [pos=0.28] {$F_2\pi_2(\gamma_H)$} node [swap,sloped,pos=0.28] {$\sim$} (S2');
    
    \begin{pgfonlayer}{bg}
    \draw[draw=black!40] (X) to node {} (X1);
    \draw[draw=black!40,dashed] (Y) to node {} (Y1);
    \draw[draw=black!40] (Z) to node {} (Z1);
    \end{pgfonlayer}
    
    \draw[small cross line, ->] (A1) to node {$p_{\psi_H}(\ell)$} (B1);
    \draw[->] (A1) to node [swap] {$p_\ell(\psi_H)$} (C1);
    \draw[->] (B1) to node [yshift=-8pt] {$\psi_H$} (D1);
    \draw[->] (C1) to node [swap, pos=0.75] {$\ell$} (D1);
    
    \draw[cross line, double equal sign distance, shorten <=1.2pt] (A) to node [swap, yshift=1pt, xshift=-5pt] {} (B);
    \draw[cross line, ->] (A) to node [swap, yshift=8pt] {$!$} (C);
    \draw[cross line, ->] (B) to node [swap,pos=0.4] {$!$} (D);
    \draw[cross line, double equal sign distance] (C) to node {} (D);
    
    \draw[globjects, small cross line, ->] (A1) to node [xshift=2pt,yshift=-1.5pt, fill=white,inner sep=1.2pt] {$p_{\psi_H}(\ell)$} (A);
    \draw[globjects, double equal sign distance] (B1) to node [swap] {} (B);
    \draw[globjects, ->] (C1) to node {$!$} (C);
    \draw[globjects, ->] (D1) to node [xshift=-4pt,yshift=2pt] {$!$} (D);
   \end{tikzpicture}
 \end{center}

The bottom face of the bottom left cube and the right face of the top right cube are the commutative squares considered above. These have also been extended by the identity maps in the bottom right cube, so that the bottom and right-hand faces of the full cuboid are as required for the pullback in question.

The bottom left cube commutes by the naturality of $\alpha$, while the top right cube commutes by the definition of $\psi = \Gamma_{\H,\Nvar}\inv(\Psi)$ as in \cref{prp:natmorphismassociation}.
Since $\alpha$ and $\gamma$ are isomorphisms, the top left cube is also a pullback.
Recall that the front and back faces are then also pullbacks. Since the top face of the top left cube must commute, we find that the green arrow we seek is given by $p_{\psi_H(\ell)}$. Hence, $\Psi(N, H, \ell)$ is isomorphic to $(N \times_{F_1(H)} F_2(H),H,p_{\psi_H}(\ell))$.

Of course, every natural transformation $\psi\colon F_2 \to F_1$ yields a morphism of extensions defined by such a pullback. For the associated natural isomorphisms we may take $\beta$ to be the identity and $\alpha$ to be $(\widehat{\alpha},\id)$ where $\widehat{\alpha}$ is defined by the diagram below.

\begin{center}
  \begin{tikzpicture}[node distance=3.0cm, auto]
    \node (A) {$\overline{N}$};
    \node (B) [below of=A] {$N$};
    \node (C) [right of=A] {$F_2(1)$};
    \node (D) [right of=B] {$F_1(1)$};
    \draw[->] (A) to node [swap] {$\widehat{\alpha}\inv_N$} (B);
    \draw[->] (A) to node {$!$} (C);
    \draw[->] (B) to node [swap] {$!$} (D);
    \draw[->] (C) to node {$\psi_{1}$} (D);
    \begin{scope}[shift=({A})]
        \draw +(0.25,-0.75) -- +(0.75,-0.75) -- +(0.75,-0.25);
    \end{scope}
  \end{tikzpicture}
\end{center}
Finally, we take $\gamma = (\widehat{\gamma},\id)$ where $\widehat{\gamma}$ is specified by the diagram below.

\begin{center}
  \begin{tikzpicture}[node distance=3.0cm, auto]
    \node (A) {$\overline{F_1(H)}$};
    \node (B) [below of=A] {$F_1(H)$};
    \node (C) [right of=A] {$F_2(H)$};
    \node (D) [right of=B] {$F_1(H)$};
    \draw[->] (A) to node [swap] {} (B);
    \draw[->] (A) to node {$\widehat{\gamma}\inv_H$} (C);
    \draw[double equal sign distance] (B) to node [swap] {} (D);
    \draw[->] (C) to node {$\psi_{H}$} (D);
    \begin{scope}[shift=({A})]
        \draw +(0.25,-0.75) -- +(0.75,-0.75) -- +(0.75,-0.25);
    \end{scope}
  \end{tikzpicture}
\end{center}
It is easy to see that $\epsilon_2 = \epsilon_1(\beta\pi_{2*}^{F_1})(\pi^{F_2}_2\gamma)$ as each factor is just the identity.

We now end this section with what is perhaps a surprising result about morphisms of extensions.

\begin{proposition}\label{prop:morphismofextensionshasleftadjoint}
If $(\Psi,\alpha,\beta,\gamma): \Gamma_{\H,\Nvar}(F_1) \to \Gamma_{\H,\Nvar}(F_2)$ is a morphism of adjoint extensions, then $\Psi\colon \Gl(F_1) \to \Gl(F_2)$ is a geometric morphism of toposes.
\end{proposition}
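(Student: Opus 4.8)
The plan is to exhibit $\Psi$ as the \emph{direct image} part of a geometric morphism $\Gl(F_1)\to\Gl(F_2)$ — that is, to produce a finite-limit-preserving left adjoint $L\colon\Gl(F_2)\to\Gl(F_1)$ of $\Psi$. (Getting the direction right is the one genuinely delicate point: $\Psi$ is left exact, but a short Beck--Chevalley computation shows it has a right adjoint only when the naturality squares of the associated natural transformation are pullbacks, so $\Psi$ is generally not an inverse image.) The starting point is the explicit description of $\Psi$ obtained in the discussion following \cref{thm:gammaequivalence}: writing $\psi=\Gamma_{\H,\Nvar}\inv(\Psi)\colon F_2\to F_1$ for the associated natural transformation, one has $\Psi(N,H,\ell)\cong(N\times_{F_1(H)}F_2(H),H,p)$, where the pullback is formed along $\ell\colon N\to F_1(H)$ and $\psi_H\colon F_2(H)\to F_1(H)$ and $p$ is the projection onto $F_2(H)$. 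Since $\Gl(F_1)$ and $\Gl(F_2)$ are toposes, a finite-limit-preserving left adjoint of $\Psi$ is exactly what is needed.

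First I would define $L$ by post-composing structure maps with $\psi$: set $L(M,K,m)=(M,K,\psi_K\circ m)$ and $L(a,b)=(a,b)$; the naturality square of $\psi$ at $b$ shows $L(a,b)$ is again a morphism of $\Gl(F_1)$, so $L$ is a well-defined functor. Next I would check $L\dashv\Psi$. A morphism $(M,K,m)\to\Psi(N,H,\ell)$ consists of a map $b\colon K\to H$ in $\H$ together with $\phi\colon M\to N\times_{F_1(H)}F_2(H)$ satisfying $p\phi=F_2(b)\circ m$; by the universal property of the pullback, giving such a $\phi$ is the same as giving a single map $\phi_N\colon M\to N$ with $\ell\phi_N=\psi_H\circ F_2(b)\circ m$ (the other component of $\phi$ being forced to be $F_2(b)\circ m$). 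Applying naturality of $\psi$ at $b$ rewrites this as $\ell\phi_N=F_1(b)\circ(\psi_K\circ m)$, which is precisely the condition for $(\phi_N,b)$ to be a morphism $L(M,K,m)=(M,K,\psi_K m)\to(N,H,\ell)$ in $\Gl(F_1)$. This correspondence is visibly natural in both variables, so it is the required adjunction isomorphism.

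Finally I would verify that $L$ is left exact. Finite limits in $\Gl(F)$ are computed componentwise in $\Nvar$ and $\H$, with the structure map of the limit induced from those of the diagram via the projections (using that $F\in\FLTop$ preserves the limit in question); since $L$ acts as the identity on both components and merely post-composes the structure map with $\psi$, naturality of $\psi$ with respect to the limit projections shows $L$ commutes with finite limits — in particular it preserves the terminal object and pullbacks. Thus $L$ is a finite-limit-preserving left adjoint of $\Psi$, so $\Psi$ is the direct image of a geometric morphism $\Gl(F_1)\to\Gl(F_2)$, as claimed. I expect the main obstacle to be purely conceptual — recognising that ``geometric morphism'' here means that $\Psi$ is a direct image and hence that one should build a left (not right) adjoint; once that is settled, every remaining step is a routine pullback-and-naturality verification.
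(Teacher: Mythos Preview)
Your proposal is correct and follows essentially the same approach as the paper: both construct the left adjoint $\Psi^*$ (your $L$) by post-composing the structure map with $\psi$, verify the adjunction, and then observe that finite limits in the glueing are computed componentwise so that $\Psi^*$ is left exact. The only cosmetic difference is that the paper checks the adjunction via an explicit counit and its universal property, whereas you give the hom-set bijection directly; these are equivalent verifications of the same adjunction.
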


\begin{proof}
    Let $\psi\colon F_2 \to F_1$ be the natural transformation associated to $\Psi$. We can construct a functor $\Psi^*\colon \Gl(F_2) \to \Gl(F_1)$ which sends $(N,H,\ell)$ to $(N,H,\psi_H\ell)$ and leaves morphisms `fixed' in the sense that $(f,g)\colon (N_1,H_1,\ell_1) \to (N_2,H_2,\ell_2)$ is sent to $(f,g)\colon (N_1,H_1,\psi_{H_1}\ell_1) \to (N_2,H_2,\psi_{H_2}\ell_2)$, which may be seen to be a morphism in $\Gl(F_1)$ using the naturality of $\psi$.
    
    We claim that $\Psi^*$ is left adjoint to $\Psi$. To see this we consider the candidate counit $\epsilon_{N,H,\ell} = (\epsilon_\ell,\id_H)$, where $\epsilon_\ell$ is defined as in the following pullback diagram.
    \begin{center}
      \begin{tikzpicture}[node distance=3.0cm, auto]
        \node (A) {$\overline{N}$};
        \node (B) [below of=A] {$N$};
        \node (C) [right of=A] {$F_2(H)$};
        \node (D) [right of=B] {$F_1(H)$};
        \draw[->] (A) to node [swap] {$\epsilon_\ell$} (B);
        \draw[->] (A) to node {$\overline{\ell}$} (C);
        \draw[->] (B) to node [swap] {$\ell$} (D);
        \draw[->] (C) to node {$\psi_{H}$} (D);
        \begin{scope}[shift=({A})]
            \draw +(0.25,-0.75) -- +(0.75,-0.75) -- +(0.75,-0.25);
        \end{scope}
      \end{tikzpicture}
    \end{center}
    
    We must show that given a morphism $(f,g) \colon (N_1, H_1, \psi_{H_1}\ell_1) = \Psi^*(N_1,H_1,\ell_1) \to (N_2,H_2,\ell_2)$ there exists a unique morphism $(\widehat{f},\widehat{g})\colon (N_1,H_1,\ell_1) \to \Psi(N_2,H_2,\ell_2) = (\overline{N_2},H_2,\overline{\ell_2})$ such that $(\epsilon_{\ell_2},\id_{H_2}) \circ \Psi^*(\widehat{f},\widehat{g}) = (f,g)$.
    We will construct this map using the following diagram.
    
    \begin{center}
      \begin{tikzpicture}[node distance=3.0cm, auto]
        \node (A) {$\overline{N_2}$};
        \node (B) [below of=A] {$N_2$};
        \node (C) [right of=A] {$F_2(H_2)$};
        \node (D) [right of=B] {$F_1(H_2)$};
        \draw[->] (A) to node [swap] {$\epsilon_{\ell_2}$} (B);
        \draw[->] (A) to node {$\overline{\ell_2}$} (C);
        \draw[->] (B) to node [swap] {$\ell_2$} (D);
        \draw[->] (C) to node {$\psi_{H_2}$} (D);
        
        \begin{scope}[shift=({A})]
            \draw +(0.25,-0.75) -- +(0.75,-0.75) -- +(0.75,-0.25);
        \end{scope}
        
        \node (X) [above left=1.2cm and 1.2cm of A.center] {$N_1$};
        \draw[out=-90,->] (X) to node [swap] {$f$} node [anchor=center] (T') {} (B);
        \draw[out=0,->] (X) to node {$F_2(g)\ell_1$} node [anchor=center] (T) {} (C);
        \draw[dashed, ->, pos=0.55] (X) to node {$\widehat{f}$} (A);
      \end{tikzpicture}
    \end{center}
    
    Here the maps out of $N_1$ form a cone as we have $\psi_{H_2}F_2(g)\ell_1 = F_1(g)\psi_{H_1}\ell_1 = \ell_2 f$, where the first equality follows from naturality of $\psi$ and the second from the fact that $(f,g)$ is a morphism in $\Gl(F_1)$.
    
    By the universal property we have that $\overline{l_2}\widehat{f} = F_2(g)\ell_1$, which means that $(\widehat{f},g)$ is a morphism from $(N_1,H_1,\ell_1)$ to $(\overline{N_2},H_2,\overline{\ell_2})$ in $\Gl(F_2)$. It is immediate from the diagram that $(\epsilon_{\ell_2},\id_{H_2}) \circ (\widehat{f},g) = (f,g)$ and it is also not hard to see that this is the unique such morphism.
    Thus, $\Psi^*$ is indeed left adjoint to $\Psi$.
    
    Finally, we must show that $\Psi^*$ preserves finite limits. This follows immediately from the fact that finite limits in the glueing may be computed componentwise.
\end{proof}

\begin{remark}
 Notice that $\Psi^*$ is in fact a morphism of \emph{non-split} extensions in the sense that it commutes with the kernel and cokernel maps up to isomorphism. However, it does not commute with the splittings unless $\Psi$ is the identity.
\end{remark}

\subsection{Colimits in \texorpdfstring{$\ExtOne(\H,\Nvar)$}{Ext(H,N)}}

In \cite{faul2019artin} it was shown for frames $H$ and $N$ that there was something akin to a Baer sum of extensions in $\ExtOne(H,N)$. It is natural to ask if something analogous occurs in the category $\ExtOne(\H,\Nvar)$. Indeed, it is not hard to see via the equivalence with $\Hom(\H,\Nvar)\op$ that $\ExtOne(\H,\Nvar)$ has all finite colimits. The following functor will help us compute these colimits.

Let $M\colon \ExtOne(\H,\Nvar)\op \to \Cat/(\Nvar \times \H)$ be the functor sending extensions $\splitext{\Nvar}{K}{\G}{E}{E_*}{\H}$ to $!_{K^*E_*}^*\colon \Gl(K^*E_*) \to \Nvar \times \H$ where $!_{K^*E_*}^*$ is left adjoint to the universal map $!_{K^*E_*}$ in $\ExtOne(\H,\Nvar)$ out of the 2-initial object $\splitext{\Nvar}{\pi_{1*}}{\Nvar \times \H}{\pi_2}{\pi_{2*}}{\H}$. Explicitly, this adjoint sends $(N,H,\ell)$ to $(N,H)$ and $(f,g)$ to $(f,g)$.

For the action on morphisms, let $(\Psi,\alpha,\beta,\gamma)$ be a morphism of extensions and let $\psi\colon K_2^*E_{2*} \to K_1^*E_{1*}$ be the corresponding natural transformation.

\begin{center}
  \begin{tikzpicture}[node distance=2.75cm, auto]
    \node (A) {$\Nvar$};
    \node (B) [right of=A] {$\G_1$};
    \node (C) [right of=B] {$\H$};
    \node (D) [below of=A] {$\Nvar$};
    \node (E) [right of=D] {$\G_2$};
    \node (F) [right of=E] {$\H$};
    \draw[->] (A) to node {$K_1$} (B);
    \draw[transform canvas={yshift=0.5ex},->] (B) to node {$E_1$} (C);
    \draw[transform canvas={yshift=-0.5ex},->] (C) to node {$E_{1*}$} (B);
    \draw[->] (D) to node [swap] {$K_2$} (E);
    \draw[transform canvas={yshift=0.5ex},->] (E) to node {$E_2$} (F);
    \draw[transform canvas={yshift=-0.5ex},->] (F) to node {$E_{2*}$} (E);
    \draw[->] (B) to node [swap] {$\Psi$} (E);
    \draw[double equal sign distance] (A) to (D);
    \draw[double equal sign distance] (C) to (F);
  \end{tikzpicture}
\end{center}

Then $M$ maps $\Psi$ to the morphism $\overline{\Psi}^*\colon \Gl(K_2^*E_{2*}) \to \Gl(K_1^*E_{1*})$, which sends $(N,H,\ell)$ to $(N,H,\psi_H\ell)$ and $(f,g)$ to $(f,g)$. It is immediate that the necessary diagram for this to be a morphism in the slice category commutes.

\begin{proposition}\label{prp:baerlim}
The functor $M$ above creates (and preserves) finite limits. Thus, finite colimits in $\ExtOne(\H,\Nvar)$ can be computed from limits in $\Cat/(\Nvar \times \H)$.
\end{proposition}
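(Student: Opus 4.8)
The plan is to carry the problem across the equivalence $\Gamma_{\H,\Nvar}\colon\Hom(\H,\Nvar)\op\xrightarrow{\sim}\ExtOne(\H,\Nvar)$ of \cref{thm:gammaequivalence}. Under it, $M$ becomes a covariant functor $\widetilde{M}\colon\Hom(\H,\Nvar)\to\Cat/(\Nvar\times\H)$ which sends $F$ to the projection $\Gl(F)\to\Nvar\times\H$, $(N,H,\ell)\mapsto(N,H)$ (this is the map called $!_F^*$ in the statement, since $\Gl(!_\H)\cong\Nvar\times\H$), and sends $\psi\colon F\to F'$ to the functor $\Gl(F)\to\Gl(F')$, $(N,H,\ell)\mapsto(N,H,\psi_H\ell)$, $(f,g)\mapsto(f,g)$ --- this is exactly the description of $M$ on morphisms recorded just before the statement. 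Since equivalences create all limits, it suffices to show that $\widetilde{M}$ creates finite limits.

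First I would check that $\widetilde{M}$ is fully faithful. It is faithful because a natural transformation $\psi$ can be recovered from $\widetilde{M}(\psi)$ by evaluating at the objects $(F(H),H,\id_{F(H)})$. For fullness, given any functor $\Phi\colon\Gl(F)\to\Gl(F')$ over $\Nvar\times\H$, put $\psi_H$ equal to the third component of $\Phi(F(H),H,\id_{F(H)})$; applying $\Phi$ to the morphisms $(F(g),g)$ yields naturality of $\psi$, and applying $\Phi$ to the morphisms $(\ell,\id_H)\colon(N,H,\ell)\to(F(H),H,\id_{F(H)})$ together with the fact that $\Phi$ preserves the $\Nvar$- and $\H$-components forces $\Phi(N,H,\ell)=(N,H,\psi_H\ell)$, i.e.\ $\Phi=\widetilde{M}(\psi)$. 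In particular $\widetilde{M}$ is conservative and reflects limits, so --- since $\Hom(\H,\Nvar)$ has finite limits, computed pointwise --- it will create finite limits once we know it preserves them.

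It then remains to verify that $\widetilde{M}$ preserves the terminal object, binary products, and equalizers; here I would use that a pointwise limit of finite-limit-preserving functors is again finite-limit-preserving (limits commute with limits). The terminal functor $\top$, with $\top(H)=1$, has $\Gl(\top)\cong\Nvar\times\H$, which is the terminal object of $\Cat/(\Nvar\times\H)$. For a product $F\times F'$ we have $(F\times F')(H)=F(H)\times F'(H)$, and splitting a map $\ell\colon N\to F(H)\times F'(H)$ into its two components identifies $\Gl(F\times F')$ --- objects and morphisms alike --- with the fibre product $\Gl(F)\times_{\Nvar\times\H}\Gl(F')$, which is the product in $\Cat/(\Nvar\times\H)$. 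For the equalizer $E$ of $\psi,\psi'\colon F\to F'$, with $E(H)=\mathrm{Eq}(\psi_H,\psi'_H)\hookrightarrow F(H)$, an object $(N,H,\bar\ell)$ of $\Gl(E)$ is precisely an object $(N,H,\ell)$ of $\Gl(F)$ with $\psi_H\ell=\psi'_H\ell$, that is, one on which $\widetilde{M}(\psi)$ and $\widetilde{M}(\psi')$ agree, and likewise for morphisms; since the forgetful functor $\Cat/(\Nvar\times\H)\to\Cat$ creates equalizers, this $\Gl(E)$ is the equalizer of $\widetilde{M}(\psi)$ and $\widetilde{M}(\psi')$ in $\Cat/(\Nvar\times\H)$. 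Thus $\widetilde{M}$ preserves finite limits, hence creates them, hence so does $M$; and the last sentence of the statement is then immediate, as finite colimits in $\ExtOne(\H,\Nvar)$ are finite limits in $\ExtOne(\H,\Nvar)\op$.

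The computations are routine; the only thing needing care is keeping straight how limits in the slice $\Cat/(\Nvar\times\H)$ are formed --- the connected ones (equalizers) are inherited from $\Cat$, whereas products are fibre products over $\Nvar\times\H$ --- together with the elementary check that pointwise limits of finite-limit-preserving functors remain finite-limit-preserving. A slicker but less self-contained alternative would be to observe that $\Gl(F)=(\Id_\Nvar\downarrow F)$ and that comma categories of this shape are strict pullbacks of $(\mathrm{dom},\mathrm{cod})\colon\Nvar^{\mathbf{2}}\to\Nvar\times\Nvar$ along $\Id_\Nvar\times F$, which exhibits $\widetilde{M}$ as a pullback functor and hence automatically continuous.
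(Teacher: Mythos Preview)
Your argument is correct and reaches the same conclusion as the paper, but the organisation differs. The paper also transfers the problem across $\Gamma_{\H,\Nvar}$ to the pointwise limit $R$ in $\Hom(\H,\Nvar)$, but then verifies the universal property of $!_R^*\colon\Gl(R)\to\Nvar\times\H$ \emph{directly for an arbitrary finite diagram}: given a cone $(\Xi_i\colon\C\to\Gl(K_i^*E_{i*}))$ in the slice, it reads off the common first two components $(N_C,H_C)$ and assembles the third components $\ell_C^i$ into a cone over the pointwise diagram in $\Nvar$, whose unique factorisation through $R(H_C)$ furnishes the induced map $S\colon\C\to\Gl(R)$. Creation then follows from the (asserted) conservativity of $M$.

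Your route is more modular: you first prove that $\widetilde{M}$ is fully faithful (a fact the paper does not isolate, and which immediately yields the conservativity the paper needs), and then reduce preservation of finite limits to the three generating cases --- terminal, binary products, equalisers --- each of which is an easy identification. This buys a cleaner separation of concerns and explains \emph{why} creation holds (full faithfulness), at the cost of treating the limit types separately rather than uniformly. The closing remark that $\widetilde{M}$ is a strict pullback along $\Id_\Nvar\times F$ of $(\mathrm{dom},\mathrm{cod})\colon\Nvar^{\mathbf 2}\to\Nvar\times\Nvar$ is a nice alternative, and in fact recovers the paper's one-shot argument in a more conceptual form.
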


\begin{proof}
Let $D\colon \mathcal{J} \to \ExtOne(\H,\Nvar)$ be a diagram functor with finite domain $\mathcal{J}$. To compute the colimit of $D$ we may compose $D$ with $\Gamma_{\H,\Nvar}\inv$ and compute the limit in $\Hom(\H,\Nvar)$.
Let $R\colon \H \to \Nvar$ be the resulting limit in $\Hom(\H,\Nvar)$ and $(\phi^i\colon R \to \Gamma_{\H,\Nvar}\inv D(i))_{i \in \mathcal{J}}$ be the corresponding projections.
Then $\splitext{\Nvar}{\pi_{1*}}{\Gl(R)}{\pi_2}{\pi_{2*}}{\H}$ is the colimit of $D$, where the morphisms of the colimiting cone are given in the obvious way.

If we consider the diagram functor $MD\colon \mathcal{J} \to \Cat/(\Nvar \times \H)$, then we may again compute the limit with the assistance of the calculation in $\Hom(\H,\Nvar)$. We claim that $!_R^*\colon \Gl(R) \to \Nvar \times \H$ is the required limit with the morphisms of the limiting cone given in the expected way --- that is, if $\phi$ is a morphism of the limiting cone in $\Hom(\H,\Nvar)$ then $\Gamma_{\H,\Nvar}(\phi)^* = \overline{\Phi}^*$ is the associated morphism in $\Cat/(\Nvar \times \H)$.

We must demonstrate that this cone satisfies the universal property. Suppose we have some other cone $(\Xi_i\colon \C \to \Gl(K_i^*E_{i*}))_{i \in \mathcal{J}}$ and consider the following diagram in $\Cat$ where $\Psi = D(f)$ for some morphism $f\colon i \to j$ in $\mathcal{J}$.

\begin{center}
  \begin{tikzpicture}[node distance=3.3cm, auto]
    \node (A) {$\Nvar \times \H$};
    \node (B) [right of=A] {$\Gl(K_j^*E_{j*})$};
    \node (C) [below of=A] {$\Gl(K_i^*E_{i*})$};
    \node (D) [right of=C] {$\C$};
    \draw[->] (B) to node [swap] {$!_{K_j^*E_{j*}}^*$} (A);
    \draw[->] (C) to node {$!_{K_i^*E_{i*}}^*$} (A);
    \draw[->] (B) to node {$\overline{\Psi}^*$} (C);
    \draw[->] (D) to node [swap] {$\Xi_j$} (B);
    \draw[->] (D) to node {$\Xi_i$} (C);
  \end{tikzpicture} 
\end{center}

Since each $\Xi_i$ is a morphism in $\Cat/(\Nvar \times \H)$ we have that it commutes with the $!$ maps. This means that the $\Xi$ maps all agree on the first two components. If we assume that $\Xi_k(C) = (N_C,H_C,\ell^k_C)$, then $\Xi_i = \overline{\Psi}^* \Xi_j$ gives $\ell^i_C = \psi_{H_C}\ell^j_C$. Now consider the following diagram in $\Nvar$ where we make use of the aforementioned limiting cone in $\Hom(\H,\Nvar)$.

\begin{center}
  \begin{tikzpicture}[node distance=3.0cm, auto]
    \node (A) {$R(H_C)$};
    \node (B) [below of=A] {$K^*_iE_{i*}(H_C)$};
    \node (C) [right of=A] {$K^*_jE_{j*}(H_C)$};
    \draw[->] (A) to node [swap,pos=0.43] {$\phi^i_{H_C}$} (B);
    \draw[->] (A) to node {$\phi^j_{H_C}$} (C);
    \draw[->] (C) to node {$\psi_{H_C}$} (B);
    
    \node (X) [above left=1.2cm and 1.2cm of A.center] {$N_C$};
    \draw[out=-90,->] (X) to node [swap] {$\ell^i_C$} node [anchor=center] (T') {} (B);
    \draw[out=0,->] (X) to node {$\ell^j_C$} node [anchor=center] (T) {} (C);
    \draw[dashed, ->, pos=0.55] (X) to node {$\overline{\ell}_C$} (A);
  \end{tikzpicture}
\end{center}

Here we use the universal property of $R$ componentwise at $H_C$ to produce the map $\overline{\ell}_C$. This allows us to construct a map $S \colon \C \to \Gl(R)$ with $S(C) = (N_C,H_C,\overline{\ell}_C)$.
As for morphisms, now note that each $\Xi_k$ sends $f\colon C \to C'$ to the `same' pair $(f_1,f_2)$ and we define $S$ to act on morphisms in the same way. The pair $(f_1,f_2)$ can be seen to be a morphism in $\Gl(R)$ from $S(C)$ to $S(C')$ by considering the above diagram in the functor category and then using the naturality of $\overline{\ell}\colon \pi_1\Xi_k \to R\pi_2\Xi_k$. This morphism $S$ is the desired map and is easily seen to be unique.

From the above it is clear that $M$ preserves limits and that every limiting cone of $MD$ is isomorphic to one of the form $(M\Gamma_{\H,\Nvar}(\phi^i)\colon M \Gamma_{\H,\Nvar}(R) \to M D(i))_{i \in \mathcal{J}}$, where $\phi^i \colon R \to \Gamma_{\H,\Nvar}\inv D(i)$ is the limiting cone in $\Hom(\H,\Nvar)$. For $M$ to create limits, it remains to show that every cone of $D$ which maps to a limiting cone of $MD$ is isomorphic to one of the above form. This follows since $M$ is conservative.
\end{proof}

Notice that the limit diagram was embedded into the slice category so that each $\Xi$ in the proof would agree on the first two components. If the limit diagram is connected, this will happen automatically and so we obtain the following corollary.

\begin{corollary}
The functor $M\colon \ExtOne(\H,\Nvar) \to \Cat$ sending extensions $\splitext{\Nvar}{K}{\G}{E}{E_*}{\H}$ to $\Gl(K^*E_*)$ and acting on morphisms as in \cref{prp:baerlim} creates finite connected limits. 
\end{corollary}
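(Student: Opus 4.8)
The plan is to obtain this as a formal consequence of \cref{prp:baerlim}. The functor in the statement is precisely the composite of the functor $M\colon \ExtOne(\H,\Nvar)\op \to \Cat/(\Nvar\times\H)$ of \cref{prp:baerlim} with the forgetful functor $U\colon \Cat/(\Nvar\times\H) \to \Cat$, so it suffices to establish two things: first, that $U$ creates connected limits; second, that creation of limits of a fixed shape is closed under composition of functors. Granting these, since \cref{prp:baerlim} gives that $M$ creates all finite limits and $U$ creates all connected limits, the composite $UM$ creates every limit whose shape is both finite and connected, which is exactly the assertion.

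For the first point I would run the standard slice argument. Let $D\colon \mathcal{J}\to\Cat/(\Nvar\times\H)$ be a diagram over a connected $\mathcal{J}$, write $D(j) = (D_0(j),\, q_j\colon D_0(j)\to\Nvar\times\H)$, and let $(L,(\lambda_j)_j)$ be a limit of $UD = D_0$ in $\Cat$. For each morphism $u\colon j\to j'$ of $\mathcal{J}$ we have $q_{j'}\circ D_0(u) = q_j$ (this is exactly the statement that $D(u)$ is a morphism in the slice) and $D_0(u)\circ\lambda_j = \lambda_{j'}$, hence $q_{j'}\circ\lambda_{j'} = q_{j'}\circ D_0(u)\circ\lambda_j = q_j\circ\lambda_j$; since $\mathcal{J}$ is connected, all the functors $q_j\circ\lambda_j\colon L\to\Nvar\times\H$ therefore coincide in a single functor $q$. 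It is then routine that $(L,q)$ with legs $\lambda_j$ is a limit of $D$ in $\Cat/(\Nvar\times\H)$ and that it is the essentially unique lift of the chosen limit of $D_0$, because any cone $((L,q'),(\lambda_j))$ on $D$ in the slice must satisfy $q' = q_j\circ\lambda_j = q$. Hence $U$ creates (in particular) finite connected limits.

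For the second point I would invoke the elementary observation that if $F$ and $G$ both create $\mathcal{J}$-limits then so does $GF$: starting from a diagram with $GF$-image admitting a limit cone, creation by $G$ lifts it (essentially uniquely) to a limit cone of the $F$-image, and creation by $F$ lifts that to a limit cone of the diagram itself, with preservation and reflection following by chaining the two statements. Applying this to $F = M$ and $G = U$ yields the corollary. I do not expect a genuine obstacle here: the substantive content was already extracted in \cref{prp:baerlim}, and the only point that needs care is bookkeeping of variance, the functor being contravariant exactly as in \cref{prp:baerlim}, so that ``limits in $\ExtOne(\H,\Nvar)$'' must be read via the colimits discussed just before that proposition. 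Conceptually, passing from $\Cat/(\Nvar\times\H)$ down to $\Cat$ costs precisely the non-connected limits: the slice structure was used in the proof of \cref{prp:baerlim} only to force the legs of a competing cone to agree on the $\Nvar$- and $\H$-components, and for a connected indexing diagram that agreement is automatic since every transition functor $\overline{\Psi}^*$ fixes the projection to $\Nvar\times\H$.
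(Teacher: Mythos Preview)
Your proposal is correct and follows essentially the same idea as the paper, which deduces the corollary from the single observation preceding it: the slice structure in \cref{prp:baerlim} was only used to force the competing cone legs to agree on the $\Nvar$- and $\H$-components, and for connected $\mathcal{J}$ this agreement is automatic. You have simply formalised this remark by factoring the functor as $U\circ M$ and invoking the standard fact that the forgetful $U\colon \Cat/(\Nvar\times\H)\to\Cat$ creates connected limits, which is exactly the abstract content of that observation.
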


\smallskip
A disconnected (co)limit is the subject of the following example.
\par\vspace{-10pt}
\begin{example}
 Let us consider the coproduct of the extensions $\splitext{\Nvar}{K_1}{\G_1}{E_1}{E_1*}{\H}$ and $\splitext{\Nvar}{K_2}{\G_2}{E_2}{E_2*}{\H}$.
 Since products in a slice category correspond to pullbacks, we may construct this coproduct using the following pullback in $\Cat$.
 \begin{center}
  \begin{tikzpicture}[node distance=3.0cm, auto]
    \node (A) {$\mathcal{P}$};
    \node (B) [below of=A] {$\Gl(K^*_1E_{1*})$};
    \node (C) [right of=A] {$\Gl(K^*_2E_{2*})$};
    \node (D) [right of=B] {$\Nvar \times \H$};
    \draw[->] (A) to node [swap] {} (B);
    \draw[->] (A) to node {} (C);
    \draw[->] (B) to node [swap] {$!^*_{K_1^*E_{1*}}$} (D);
    \draw[->] (C) to node {$!^*_{K_2^*E_{2*}}$} (D);
    \begin{scope}[shift=({A})]
        \draw +(0.25,-0.75) -- +(0.75,-0.75) -- +(0.75,-0.25);
    \end{scope}
  \end{tikzpicture}
\end{center}
If $!_\mathcal{P}$ is the composite morphism from $\mathcal{P}$ to $\Nvar \times \H$, then the coproduct extension may be recovered as $\splitext{\Nvar}{(\pi_1 !_\mathcal{P})_*}{\mathcal{P}}{\pi_2!_\mathcal{P}}{(\pi_2!_\mathcal{P})_*}{\H}$.
\end{example}

\section{The \texorpdfstring{$\ExtOne$}{Ext} functor}\label{sec:ext_functor}

Given that we have established that $\ExtOne(\H,\Nvar)$ is equivalent to $\Hom(\H,\Nvar)\op$, it is natural to ask if $\ExtOne$ can be extended to a 2-bifunctor and if $\ExtOne$ and the $\Homop$ will then be 2-naturally equivalent (where $\Homop = \mathrm{Op} \circ \Hom\co$ and $\mathrm{Op}$ is the opposite category 2-functor).

The answer is of course ``yes'', for if $T\colon \H' \to \H$ and $S \colon \Nvar \to \Nvar'$, all we need do is define $\ExtOne(T,S) = \Gamma_{\H',\Nvar'} \circ \Homop(T,S) \circ \Gamma_{\H,\Nvar}\inv$ (and similarly for natural transformations).
However this is unsatisfactory, as there is already established behaviour for how an $\ExtOne$ functor ought to act on objects and morphisms. In this section, we show that the above definition conforms with the usual expectations of an $\ExtOne$ functor.

We consider each component of our $\ExtOne$ functor separately and begin by describing $\ExtOne(-,\Nvar)$.
In other contexts (for instance, see \cite{borceux2005internal}) the extension functor can be obtained from a fibration. In the protomodular setting, we start from the `fibration of points' sending split epimorphisms to their codomain. In the more general setting of $\mathcal{S}$-protomodularity (see \cite{bourn2015Sprotomodular})
we consider only a certain subclass of split epimorphisms. This suggests we consider a 2-fibration sending open subtopos adjunctions to the codomain of their inverse image functors.

A categorification of the Grothendieck construction (see \cite{buckley2014fibred}) gives that 2-fibrations correspond to 3-functors into $2\Cat$. Fortunately, aside from motivation, we will largely be able to avoid 3-functors for the same reasons that $\ExtTwo(\H,\Nvar)$ is essentially a 1-category (\cref{cor:extensions_only_1_cat}).

While the paradigmatic example of a fibration is the codomain fibration, which maps from the whole arrow category to the base category, the domain of the analogous 2-fibration is restricted to the category of fibrations. See \cite{buckley2014fibred} for more details on 2-fibrations.

The fibre 3-functor $\FLTop\coop \to 2\Cat$ corresponding to the 2-fibration $\mathrm{Cod}\colon \mathrm{Fib}_{\FLTop} \to \FLTop$ can be described as follows (omitting the description of the coherence data for simplicity):
\begin{itemize}
    \item On objects it sends a topos $\mathcal{E}$ to the slice 2-category of finite-limit-preserving fibrations from toposes to $\mathcal{E}$.
    \item On 1-morphisms it sends a finite-limit-preserving functor $T\colon \mathcal{E'} \to \mathcal{E}$ to the 2-functor $T^\star$ corresponding to pulling back along $T$.
    \item On 2-morphisms it sends a natural transformation $\tau\colon T \to S$ to a 2-natural
    transformation $\tau^\star$ from $S^\star$ to $T^\star$. The component $\tau^\star_E\colon S^\star(E) \to T^\star(E)$ indexed by the finite-limit-preserving fibration $E\colon \mathcal{D} \to \mathcal{E}$ can be constructed in three steps.
    \begin{itemize}
        \item First define the morphism of fibrations $(P_S, S)\colon S^\star(E) \to E$ given by the pullback projection.
        \begin{center}
         \begin{tikzpicture}[node distance=3.0cm, auto]
          \node (A) {$\overline{\mathcal{D}}_S$};
          \node (B) [below of=A] {$\mathcal{E'}$};
          \node (C) [right=2.75cm of A] {$\mathcal{D}$};
          \node (D) [below of=C] {$\mathcal{E}$};
          \draw[->] (A) to node [swap] {$\phantom{E}\mathllap{S^\star(E)}$} (B);
          \draw[->] (A) to node {$P_S$} (C);
          \draw[->] (B) to node [swap] {$S$} (D);
          \draw[->] (C) to node {$E$} (D);
          \begin{scope}[shift=({A})]
            \draw +(0.25,-0.75) -- +(0.75,-0.75) -- +(0.75,-0.25);
          \end{scope}
          \node (E) at ($(B)!0.5!(C)$) {\large $\cong$};
         \end{tikzpicture}
        \end{center}
        \item Then we may use the fibration property of $E$ to lift the natural transformation $\tau S^\star(E)$ to a natural transformation into $P_S$. Explicitly, for an object $X \in \overline{\mathcal{D}}_S$, the morphism $\tau_{S^\star(E)(X)}\colon T(S^\star(E)(X)) \to S(S^\star(E)(X))$ can be lifted to a morphism in $\mathcal{D}$ with codomain $P_S(X)$. These lifted morphisms assemble into a natural transformation $\overline{\tau}$ from a new functor $L\colon \overline{\mathcal{D}}_S \to \mathcal{D}$ to $P_S$. This functor sends a morphism $f\colon X \to Y$ to the morphism obtained by factoring $P_S(f) \overline{\tau}_X$ through $\overline{\tau}_Y$ as shown in the diagram below.
        \begin{center}
         \scalebox{0.95}{
         \begin{tikzpicture}[node distance=3.5cm, auto]
          \node (A) {$L(X)$};
          \node (B) [below of=A] {$L(Y)$};
          \node (C) [right=2.75cm of A] {$P_S(X)$};
          \node (D) [below of=C] {$P_S(Y)$};
          \draw[dashed,->] (A) to node [swap] {$L(f)$} (B);
          \draw[->] (A) to node {$\overline{\tau}_X$} (C);
          \draw[->] (B) to node [swap] {$\overline{\tau}_Y$} (D);
          \draw[->] (C) to node {$P_S(f)$} (D);
          
          \node (A') [below=4cm of B] {$T(S^\star(E)(X))$};
          \node (B') [below of=A'] {$T(S^\star(E)(Y))$};
          \node (C') [below=4cm of D] {$S(S^\star(E)(X))$};
          \node (D') [below of=C'] {$S(S^\star(E)(Y))$};
          \draw[->] (A') to node [swap] {$T(S^\star(E)(f))$} (B');
          \draw[->] (A') to node {$\tau_{S^\star(E)(X)}$} (C');
          \draw[->] (B') to node [swap] {$\tau_{S^\star(E)(Y)}$} (D');
          \draw[->] (C') to node {$S(S^\star(E)(f))$} (D');
          
          \coordinate (M1) at ($(A)!0.5!(C)$);
          \coordinate (M2) at ($(B')!0.5!(D')$);
          \draw[semithick,darkgray] (M1) ++(-50:8cm and 5cm) arc (-50:-130:8cm and 5cm);
          \path (M1) ++(-90:8cm and 5cm) coordinate (N1);
          \draw[semithick,darkgray] (M2) ++(50:8cm and 5cm) arc (50:130:8cm and 5cm);
          \path (M2) ++(90:8cm and 5cm) coordinate (N2);
          \draw[|->] ($(N1) - (0,0.25)$) to node {$E$} ($(N2) + (0,0.25)$);
         \end{tikzpicture}
         }
        \end{center}
        \item Finally, consider the 2-pullback of $E$ and $T$ and note that the maps $L\colon \overline{\mathcal{D}}_S \to \mathcal{D}$ and $S^\star(E)\colon \overline{\mathcal{D}}_S \to \mathcal{E'}$ form a cone as shown below.
        \begin{center}
         \begin{tikzpicture}[node distance=3.25cm, auto]
          \node (A) {$\overline{\mathcal{D}}_T$};
          \node (B) [below of=A] {$\mathcal{E'}$};
          \node (C) [right=2.75cm of A] {$\mathcal{D}$};
          \node (D) [below of=C] {$\mathcal{E}$};
          \draw[->] (A) to node [pos=0.45,swap] {$\phantom{E}\mathllap{T^\star(E)}$} (B);
          \draw[->] (A) to node {$P_T$} (C);
          \draw[->] (B) to node [swap] {$T$} (D);
          \draw[->] (C) to node {$E$} (D);
          \begin{scope}[shift=({A})]
            \draw +(0.25,-0.75) -- +(0.75,-0.75) -- +(0.75,-0.25);
          \end{scope}
          \node (E) at ($(B)!0.5!(C)$) {\large $\cong$};
          
          \node (X) [above left=1.2cm and 1.2cm of A.center] {$\overline{\mathcal{D}}_S$};
          \draw[out=-105,in=150,->] (X) to node [pos=0.35,swap] {$S^\star(E)$} node [anchor=center] (T') {} (B);
          \draw[out=15,in=120,->] (X) to node {$L$} node [anchor=center] (T) {} (C);
          \draw[dashed, ->, pos=0.55] (X) to node {$\tau^\star_E$} (A);
          
          \node (E2) at ($(A)!0.6!(T)$) {\large $\cong$};
          \node (E3) at ($(A)!0.5!(T')$) [yshift=5pt,xshift=-5pt] {\large $\cong$};
         \end{tikzpicture}
        \end{center}
        Thus, we may factor these through $P_T$ and $T^\star(E)$ respectively to obtain a functor from $\overline{\mathcal{D}}_S$ to $\overline{\mathcal{D}}_T$.
        This is the desired functor $\tau^\star_E\colon S^\star(E) \to T^\star(E)$.
    \end{itemize}
    The coherent set of 2-isomorphisms for the 2-natural transformation can also be obtained by the cartesian property of the lifted maps and universal property of the 2-pullback.
\end{itemize}
We can then easily modify this to describe the fibre 3-functor for the 2-fibration of (open) points.
Moreover, to obtain $\ExtTwo(-,\Nvar)\colon \FLTop\coop \to 2\Cat$ we restrict to inverse image functors of open subtoposes (equipped with right adjoint splittings) with fixed kernel object $\Nvar$.
The above discussion restricts easily to this case, since these functors are stable under pullback along finite-limit-preserving functors and the relevant morphisms can be shown to be morphisms of extensions.
To see this we will use the following folklore result, which we prove here for completeness.

\begin{proposition}\label{prop:commaobject}
Let $\C$ be a 2-category, $F\colon \H \to \Nvar$ and $T\colon \H' \to \H$ 1-morphisms. Then the comma object $\Gl(FT)$ can be represented as a (strict) 2-pullback (where we draw the 2-cokernels of the extension horizontally).
\begin{center}
  \begin{tikzpicture}[node distance=3.5cm, auto]
    \node (A) {$\Gl(FT)$};
    \node (B) [below of=A] {$\Gl(F)$};
    \node (C) [right of=A] {$\H'$};
    \node (D) [right of=B] {$\H$};
    \draw[->] (A) to node [swap] {$Q$} (B);
    \draw[->] (A) to node {$\pi^{FT}_2$} (C);
    \draw[->] (B) to node [swap] {$\pi^F_2$} (D);
    \draw[->] (C) to node {$T$} (D);
    \begin{scope}[shift=({A})]
        \draw +(0.25,-0.75) -- +(0.75,-0.75) -- +(0.75,-0.25);
    \end{scope}
 \end{tikzpicture}
\end{center}
\end{proposition}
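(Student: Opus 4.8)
The plan is to verify the strict 2-pullback universal property directly, working with the explicit description of glueings (which is also where strictness becomes transparent --- an abstract comma-object argument would only yield the result up to equivalence). First I would produce the comparison functor $Q\colon \Gl(FT)\to\Gl(F)$: on objects set $(N,H',m)\mapsto(N,T(H'),m)$, which is legitimate since $m\colon N\to FT(H')=F(T(H'))$, and on morphisms set $(f,g)\mapsto(f,T(g))$, the defining equation $FT(g)\,m=m'\,f$ of a morphism of $\Gl(FT)$ being literally the equation $F(T(g))\,m=m'\,f$ required for $(f,T(g))$ to be a morphism of $\Gl(F)$. Since $T$ and $F$ preserve finite limits and limits in a glueing are computed componentwise, $Q$ is finite-limit-preserving, and by construction $\pi_2^F Q=T\,\pi_2^{FT}$ on the nose (both send $(N,H',m)$ to $T(H')$ and $(f,g)$ to $T(g)$), so the square commutes strictly.

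Next I would check the one-dimensional universal property; since strict 2-pullbacks in $\FLTop$ are computed as ordinary pullbacks of the underlying categories, with finite limits inherited componentwise, it suffices to show $(\Gl(FT),Q,\pi_2^{FT})$ is the ordinary pullback of $\pi_2^F$ and $T$ in $\Cat$. Given $U\colon\mathcal{X}\to\Gl(F)$ and $V\colon\mathcal{X}\to\H'$ with $\pi_2^F U=TV$, write $U(x)=(N_x,H_x,\ell_x)$; the hypothesis forces $H_x=T(V(x))$, so $W(x):=(N_x,V(x),\ell_x)$ is a legitimate object of $\Gl(FT)$, and defining $W$ on morphisms the same way gives the unique functor with $QW=U$ and $\pi_2^{FT}W=V$ --- uniqueness because $Q$ is injective on the first and third components of an object while $\pi_2^{FT}$ recovers the second, and $W$ preserves finite limits because $U$ and $V$ do.

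For the two-dimensional part, given $W,W'\colon\mathcal{X}\to\Gl(FT)$ and 2-cells $\sigma\colon QW\Rightarrow QW'$ and $\rho\colon\pi_2^{FT}W\Rightarrow\pi_2^{FT}W'$ with $\pi_2^F\sigma=T\rho$, I would write $\sigma_x=(a_x,b_x)$; the condition gives $b_x=T(\rho_x)$, so $\omega_x:=(a_x,\rho_x)$ is a morphism of $\Gl(FT)$ (its commuting square is exactly the one witnessing that $\sigma_x$ is a morphism of $\Gl(F)$, as $F(T(\rho_x))=FT(\rho_x)$), and the $\omega_x$ assemble into the unique $\omega\colon W\Rightarrow W'$ with $Q\omega=\sigma$ and $\pi_2^{FT}\omega=\rho$. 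Packaged differently, the last two steps say that both $\Gl(FT)$ and the 2-pullback represent the 2-functor $\mathcal{X}\mapsto\{(a\colon\mathcal{X}\to\Nvar,\ c\colon\mathcal{X}\to\H',\ \lambda\colon a\Rightarrow FTc)\}$, the point being that strict commutativity $\pi_2^F U=TV$ lets one absorb the middle object of a cone over $\pi_2^F$ and $T$ into such a triple. I do not anticipate a genuinely hard step: the content is bookkeeping of triples and pairs, and the only thing to watch is strictness --- making sure the square and the comparison 1- and 2-cells hold on the nose --- which is exactly why I would work throughout with the concrete models $\Gl(F)$ and $\Gl(FT)$.
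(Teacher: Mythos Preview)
Your argument is correct, but it takes a different route from the paper. You verify the 2-pullback universal property by hand, using the explicit description of objects and morphisms in $\Gl(F)$ and $\Gl(FT)$: you build the unique factorisation $W$ componentwise and then do the same for 2-cells. The paper instead argues via a pasting lemma. It constructs $Q$ from the universal property of the comma object $\Gl(F)$ (obtaining the same concrete formula you write down), then pastes the candidate square onto the comma square for $\Gl(F)$ and observes that the composite is exactly the comma square for $\Gl(FT)$; the conclusion that the top square is a 2-pullback then follows just as in the converse direction of the ordinary pullback lemma. Your approach is more elementary and makes the strictness completely transparent, at the cost of being tied to the concrete model; the paper's argument is shorter once one is willing to invoke the pasting lemma, and in principle works in any 2-category admitting the relevant comma objects (matching the generality suggested by the opening ``Let $\C$ be a 2-category''). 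One small point: your closing ``packaged differently'' sentence describes the represented 2-functor of the \emph{comma} object $\Gl(FT)$ rather than of the 2-pullback of $\pi_2^F$ along $T$; the two agree here precisely because $\Gl(F)$ itself is a comma object, which is really the content of the paper's pasting argument.
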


\begin{proof}
We must first describe $Q$. Consider the following comma object diagrams.

\begin{center}
  \begin{tikzpicture}[node distance=3.5cm, auto]
    \node (A) {$\Gl(F)$};
    \node (B) [below of=A] {$\Nvar$};
    \node (C) [right of=A] {$\H$};
    \node (D) [right of=B] {$\Nvar$};
    \node (A') [right of=C] {$\Gl(FT)$};
    \node (B') [below of=A'] {$\Nvar$};
    \node (C') [right of=A'] {$\H'$};
    \node (D') [right of=B'] {$\Nvar$};
    \draw[->] (A) to node [swap] {$\pi^F_1$} (B);
    \draw[->] (A) to node {$\pi^F_2$} (C);
    \draw[double equal sign distance] (B) to node {} (D);
    \draw[->] (C) to node {$F$} (D);
    \begin{scope}[shift=({A})]
        \draw[dashed] +(0.25,-0.75) -- +(0.75,-0.75) -- +(0.75,-0.25);
    \end{scope}

    \draw[->] (A') to node [swap] {$\pi^{FT}_1$} (B');
    \draw[->] (A') to node {$\pi^{FT}_2$} (C');
    \draw[double equal sign distance] (B') to node {} (D');
    \draw[->] (C') to node {$FT$} (D');
    \begin{scope}[shift=({A'})]
        \draw[dashed] +(0.25,-0.75) -- +(0.75,-0.75) -- +(0.75,-0.25);
    \end{scope}
     \draw[double distance=4pt,-implies] ($(B)!0.38!(C)$) to node {$\lambda_1$} ($(B)!0.62!(C)$);
      \draw[double distance=4pt,-implies] ($(B')!0.38!(C')$) to node {$\lambda_2$} ($(B')!0.62!(C')$);
  \end{tikzpicture}
\end{center}
Here $\lambda_1 \colon \pi^F_1 \to F\pi^F_2$ and $\lambda_2\colon \pi^{FT}_1 \to FT\pi^{FT}_2$ are the universal (not necessarily invertible) 2-morphisms. Explicitly, we have $\lambda_1 = \pi_1^F \theta^F$ and $\lambda_2 = \pi_1^{FT} \theta^{FT}$.

Now the map $Q$ is given by the universal property of the comma object $\Gl(F)$ applied to the 2-cone given by $\lambda_2 \colon \pi^{FT}_1 \to FT\pi^{FT}_2$.

\begin{center}
  \begin{tikzpicture}[node distance=3.25cm, auto]
    \node (A) {$\Gl(F)$};
    \node (B) [below of=A] {$\Nvar$};
    \node (C) [right of=A] {$\H$};
    \node (D) [right of=B] {$\Nvar$};
    \draw[->] (A) to node [swap] {$\pi^F_1$} (B);
    \draw[->] (A) to node {$\pi^F_2$} (C);
    \draw[double equal sign distance] (B) to node {} (D);
    \draw[->] (C) to node {$F$} (D);
    
    \begin{scope}[shift=({A})]
        \draw[dashed] +(0.25,-0.75) -- +(0.75,-0.75) -- +(0.75,-0.25);
    \end{scope}
    
    \draw[double distance=4pt,-implies] ($(B)!0.38!(C)$) to node {$\lambda_1$} ($(B)!0.62!(C)$);
    
    \node (X) [above left=1.2cm and 1.2cm of A.center] {$\Gl(FT)$};
    \draw[out=-90,->] (X) to node [swap] {$\pi^{FT}_1$} node [anchor=center] (T') {} (B);
    \draw[out=0,->] (X) to node {$T\pi^{FT}_2$} node [anchor=center] (T) {} (C);
    \draw[dashed, ->, pos=0.55] (X) to node {$Q$} (A);
    
    \draw[double distance=4pt,-implies,shorten <=2.5pt,shorten >=1pt] (A) to node [swap] {$\mu$} (T);
    \draw[double distance=4pt,-implies,shorten <=2.5pt,shorten >=1pt] (T') to node [pos=0.75] {$\nu\inv$} (A);
  \end{tikzpicture}
\end{center}

Here $\nu$ and $\mu$ are 2-isomorphisms and satisfy that $F\mu \circ \lambda_1Q \circ \nu\inv = \lambda_2$. Concretely, we have that $Q(N,H,\ell) = (N,T(H),\ell)$ and $Q(f,g) = (f,T(g))$ and that $\mu$ and $\nu$ are both the identity.

Now we paste our candidate 2-pullback square to our other comma object diagrams as follows.

\begin{center}
  \begin{tikzpicture}[node distance=3.25cm, auto]
    \node (A) {$\Gl(FT)$};
    \node (B) [below of=A] {$\Gl(F)$};
    \node (C) [right of=A] {$\H'$};
    \node (D) [right of=B] {$\H$};
    \node (B') [below of=B] {$\Nvar$};
    \node (D') [below of=D] {$\Nvar$};
    \draw[->] (A) to node [swap] {$Q$} (B);
    \draw[->] (A) to node {$\pi^{FT}_2$} (C);
    \draw[->] (B) to node [swap] {$\pi^F_2$} (D);
    \draw[->] (C) to node {$T$} (D);
    \begin{scope}[shift=({A})]
        \draw +(0.25,-0.75) -- +(0.75,-0.75) -- +(0.75,-0.25);
    \end{scope}
    \draw[double distance=4pt,-implies] ($(B)!0.38!(C)$) to node {$\mu$} ($(B)!0.62!(C)$);
   \draw[->] (B) to node [swap] {$\pi^F_1$} (B');
    \draw[double equal sign distance] (B') to node {} (D');
    \draw[->] (D) to node {$F$} (D');
    \begin{scope}[shift=({B})]
        \draw[dashed] +(0.25,-0.75) -- +(0.75,-0.75) -- +(0.75,-0.25);
    \end{scope}
    \draw[double distance=4pt,-implies] ($(B')!0.38!(D)$) to node {$\lambda_1$} ($(B')!0.62!(D)$);
  \end{tikzpicture}
\end{center}

Since $\pi_1^FQ = \pi_1^{FT}$,
we see that the pasting diagram above is just the comma object diagram corresponding to $\Gl(FT)$.
It follows that the upper square is a 2-pullback in a similar manner to (the converse direction of) the pullback lemma.
\end{proof}

We will now describe the $\ExtOne$ `functor' explicitly and at the same time demonstrate its relationship to $\Hom$.

The 3-functor $\ExtTwo(-,\Nvar)$ sends a topos $\H$ to the 2-category of extensions $\ExtTwo(\H,\Nvar)$ as defined above. In fact, since $\ExtTwo(\H,\Nvar)$ is equivalent to a 1-category, it turns out that $\ExtTwo(-,\Nvar)$ factors through $\Cat \hookrightarrow 2\Cat$ and so can be treated as a 2-functor $\ExtOne(-,\Nvar)\colon \FLTop\coop \to \Cat$.

The following computations will all be performed in the equivalent full subcategory of $\ExtOne(\H,\Nvar)$ whose objects are of the form $\splitext{\Nvar}{\pi_{1*}}{\Gl(F)}{\pi_2}{\pi_{2*}}{\H}$. The fact that this may be done coherently is due to the 2-categorical analogue of the result which says that if $F\colon \B \to \C$ is a functor and for each $B$ we have an isomorphism $F(B) \cong C_B$, then there is a functor $F'$ such that $F'(B) = C_B$ (and which acts on morphisms by conjugating the result of $F$ by the appropriate isomorphisms).

We can use \cref{prop:commaobject} to show how $\ExtOne(-,\Nvar)$ acts on 1-morphisms.
If $T\colon \H' \to \H$ is a finite-limit-preserving functor, then it is enough to describe how $\ExtOne(T,\Nvar)\colon \ExtOne(\H,\Nvar) \to \ExtOne(\H',\Nvar)$ acts on extensions in Artin-glueing form. It should `take the 2-pullback along $T$' and hence it sends the object $\splitext{\Nvar}{\pi^F_{1*}}{\Gl(F)}{\pi^F_2}{\pi^F_{2*}}{\H}$ to $\splitext{\Nvar}{\pi^{FT}_{1*}}{\Gl(FT)}{\pi^{FT}_2}{\pi^{FT}_{2*}}{\H'}$ as in the following diagram.

\begin{center}
  \begin{tikzpicture}[node distance=2.75cm, auto]
    \node (A) {$\Gl(FT)$};
    \node (B) [below of=A] {$\Gl(F)$};
    \node (C) [right=2cm of A] {$\H'$};
    \node (D) [below of=C] {$\H$};
    \node (E) [left=2cm of A] {$\Nvar$};
    \node (F) [below of=E] {$\Nvar$};
    \draw[->] (A) to node [swap] {} (B);
    \draw[transform canvas={yshift=0.5ex}, ->] (A) to node {$\pi^{FT}_2$} (C);
    \draw[transform canvas={yshift=0.5ex}, ->] (B) to node {$\pi^F_2$} (D);
    \draw[->] (C) to node {$T$} (D);
    \draw[->] (E) to node {$\pi^{FT}_{1*}$} (A);
    \draw[->] (F) to node [swap] {$\pi_{1*}^F$} (B);
    \draw[transform canvas={yshift=-0.5ex}, ->] (D) to node {$\pi^F_{2*}$} (B);
    \draw[transform canvas={yshift=-0.5ex}, ->] (C) to node {$\pi^{FT}_{2*}$} (A);
    \draw[double equal sign distance] (E) to node {} (F);
    
    \begin{scope}[shift=({A})]
        \draw +(0.25,-0.75) -- +(0.75,-0.75) -- +(0.75,-0.25);
    \end{scope}
  \end{tikzpicture}
\end{center}

The functor $\ExtOne(T,\Nvar)$ acts on morphisms via the universal property of the 2-pullback. Let $\Psi$ be a morphism of extensions corresponding to the natural transformation $\psi\colon F_2 \to F_1$ and consider the following diagram, noting that $T\pi^{F_1T}_2 = \pi^{F_2}_2\Psi Q$ and hence we have a 2-cone. Also recall that $\Psi$ can always be to chosen to correspond to pulling back along $\psi$.

\begin{center}
  \begin{tikzpicture}[node distance=4.5cm, auto, on grid,
    cross line/.style={preaction={draw=white, -,line width=6pt}}
    ]
    
    \node (A) {$\Gl(F_2T)$};
    \node [right of=A] (B) {$\H'$};
    \node [below of=A] (C) {$\Gl(F_2)$};
    \node [right of=C] (D) {$\H$};
    
    \begin{scope}[shift=({A})]
        \draw [draw=black] +(0.25,-0.75) coordinate (X) -- +(0.75,-0.75) coordinate (Y) -- +(0.75,-0.25) coordinate (Z);
    \end{scope}
    
    \node [above left=1.4cm and 1.9cm of A.center] (A1) {$\Gl(F_1T)$};
    \node [right of=A1] (B1) {$\H'$};
    \node [below of=A1] (C1) {$\Gl(F_1)$};
    \node [right of=C1] (D1) {$\H$};
    
    \begin{scope}[shift=({A1})]
        \draw [draw=black!70] +(0.22,-0.78) coordinate (X1) -- +(0.78,-0.78) coordinate (Y1) -- +(0.78,-0.22) coordinate (Z1);
    \end{scope}
    
    \draw[->] (A1) to node {$\pi^{F_1T}_2$} (B1);
    \draw[->] (A1) to node [swap] {$Q$} (C1);
    \draw[->] (B1) to node [pos=0.65] {$T$} (D1);
    \draw[->] (C1) to node [node on layer=over,fill=white,inner sep=2pt,outer sep=0pt,yshift=1pt] (K) [pos=0.25] {$\pi^{F_1}_2$} (D1);
    
    \draw[cross line, ->] (A) to node {$\pi^{F_2T}_2$} (B);
    \draw[cross line, ->] (A) to node [swap, pos=0.35] {$Q'$} (C);
    \draw[cross line, ->] (B) to node {$T$} (D);
    \draw[cross line, ->] (C) to node {$\pi^{F_2}_2$} (D);
    
    \begin{scope}
    \begin{pgfonlayer}{over}
    \clip (K.south west) rectangle (K.north east);
    \draw[->, opacity=0.7] (A) to (C);
    \end{pgfonlayer}
    \end{scope}
    
    \draw[dashed,->] (A1) to node [yshift=-2pt,pos=0.55] {$\ExtOne(T,\Nvar)(\Psi)$} (A);
    \draw[double equal sign distance] (B1) to node {} (B);
    \draw[->] (C1) to node [swap, yshift=2pt] {$\Psi$} (C);
    \draw[double equal sign distance] (D1) to node {} (D);
   \end{tikzpicture}
\end{center}
We shall now show that we may take $\ExtOne(T,\Nvar)(\Psi)$ to be $(\Gamma_{\H',\Nvar} \circ \Hom(T,\Nvar) \circ \Gamma_{\H,\Nvar}\inv)(\Psi) = \Gamma_{\H',\Nvar}(\psi T)$ --- that is, the latter functor is given by the universal property of the 2-pullback. We must also supply two 2-morphisms corresponding to the left and top faces of the above cube. Both may be taken to be the identity. Note in fact that now each face of the cube commutes strictly.

We only need to check that $\pi_2^{F_2T}\Gamma_{\H',\Nvar}(\psi T) = \pi_2^{F_1T}$ and that $Q'\Gamma_{\H',\Nvar}(\psi T) = \Psi Q$. The former is immediate and the latter follows because $\Psi Q(N,H,\ell) = \Psi(N,T(H),\ell)$ is given by the following pullback.

\begin{center}
  \begin{tikzpicture}[node distance=3.25cm, auto]
    \node (A) {$N'$};
    \node (B) [below of=A] {$N$};
    \node (C) [right of=A] {$F_2T(H)$};
    \node (D) [right of=B] {$F_1T(H)$};
    \draw[->] (A) to node [swap] {} (B);
    \draw[->] (A) to node {$\ell'$} (C);
    \draw[->] (B) to node [swap] {$\ell$} (D);
    \draw[->] (C) to node {$\psi_{T(H)}$} (D);
    \begin{scope}[shift=({A})]
        \draw +(0.25,-0.75) -- +(0.75,-0.75) -- +(0.75,-0.25);
    \end{scope}
  \end{tikzpicture}
\end{center}
This is readily seen to be the same pullback which determines $\Gamma_{\H',\Nvar}(\psi T)(N,H,\ell)$.

Finally, we discuss how $\ExtOne(-,\Nvar)$ acts on 2-morphisms. We follow the construction outlined above for the codomain 2-fibration. Let $\tau \colon T \to T'$ be a natural transformation. Then we describe the natural transformation $\ExtOne(\tau,\Nvar)\colon \ExtOne(T',\Nvar) \to \ExtOne(T,\Nvar)$ componentwise.

Without loss of generality we may describe each component at extensions of the form $\Gamma_{\H,\Nvar}(F)$. Consider the following diagram.

\begin{center}
  \begin{tikzpicture}[node distance=3.25cm, auto]
    \node (A) {$\H'$};
    \node (B) [below of=A] {$\H$};
    \node (C) [right of=A] {$\Gl(FT')$};
    \node (D) [right of=B] {$\Gl(F)$};
    \node (B') [right=3.25cm of D] {$\Gl(F)$};
    \node (A') [above of=B'] {$\Gl(FT)$};
    \node (C') [right of=A'] {$\H'$};
    \node (D') [right of=B'] {$\H$};
    \draw[->] (A) to node [swap] {$T'$} (B);
    \draw[->] (C) to node [swap] {$\pi^{FT}_2$} (A);
    \draw[->] (D) to node {$\pi^F_2$} (B);
    \draw[->] (C) to node [swap] {$P_{T'}$} (D);
    \begin{scope}[shift=({C})]
        \draw +(-0.25,-0.75) -- +(-0.75,-0.75) -- +(-0.75,-0.25);
    \end{scope}
    
    \draw[->] (A') to node {$P_T$} (B');
    \draw[->] (A') to node {$\pi^{FT'}_2$} (C');
    \draw[->] (B') to node [swap] {$\pi^F_2$} (D');
    \draw[->] (C') to node {$T$} (D');
    \begin{scope}[shift=({A'})]
        \draw +(0.25,-0.75) -- +(0.75,-0.75) -- +(0.75,-0.25);
    \end{scope}
    \draw[double equal sign distance] (D) to node [swap] {} (B');
    \draw[dashed, ->] (C) to node {$\ExtOne(\tau,\Nvar)_{\Gamma_{\H,\Nvar}(F)}$} (A');
    \draw[->, highlightArrow] (C) to node [xshift=-2pt,yshift=-2pt] (T'') {$L_\tau$} (B');
    \draw[double distance=4pt,-implies,highlightArrow] ($(T'')!0.3!(D)$) to node [swap,pos=0.3] {$\overline{\tau}$} ($(T'')!0.6!(D)$);
  \end{tikzpicture}
\end{center}

As discussed in the case of the codomain fibration, we may define a functor $L_\tau$ as follows.
We have that $P_{T'}(N,H,\ell) = (N,T'(H),\ell)$ lies above the codomain of $\tau_H \colon T(H) \to T'(H)$ with respect to the fibration $\pi^F_2$ (see \cref{prp:pi2fib}) and so by the universal property of the fibration we get a map $\overline{\tau}_{(N,H,\ell)}\colon (\overline{N},T(H),\overline{\ell}) \to (N,T'(H),\ell)$ as given by the pair of vertical morphisms in the following pullback diagram.

\begin{center}
  \begin{tikzpicture}[node distance=3.25cm, auto]
    \node (A) {$\overline{N}$};
    \node (B) [below of=A] {$N$};
    \node (C) [right of=A] {$FT(H)$};
    \node (D) [right of=B] {$FT'(H)$};
    \draw[->] (A) to node [swap] {$\pi_1^F \overline{\tau}_{(N,H,\ell)}$} (B);
    \draw[->] (A) to node {$\overline{\ell}$} (C);
    \draw[->] (B) to node [swap] {$\ell$} (D);
    \draw[->] (C) to node {$\tau_{F(H)}$} (D);
    \begin{scope}[shift=({A})]
        \draw +(0.25,-0.75) -- +(0.75,-0.75) -- +(0.75,-0.25);
    \end{scope}
 \end{tikzpicture}
\end{center}

These morphisms form a natural transformation $\overline{\tau}\colon L_\tau \to P_{T'}$,
where $L_\tau$ is the functor which sends $(N,H,\ell)$ to $(\overline{N},T(H),\overline{\ell})$.
As above, this functor factors through $P_T$ to give a functor $\ExtOne(\tau,\Nvar)_{\Gamma_{\H,\Nvar}(F)}\colon \Gl(FT') \to \Gl(FT)$, which sends $(N,H,\ell)$ to $(\overline{N},H,\overline{\ell})$.
It remains to show that this gives a morphism of split extensions, but it is clear from the above pullback diagram that this functor is the morphism of split extensions corresponding to $\tau F$ (which itself is equal to $\Hom(\tau,\Nvar)_F$).

The morphisms $\ExtOne(\tau,\Nvar)_{\Gamma_{\H,\Nvar}(F)}$ define the desired natural transformation $\ExtOne(\tau,\Nvar)$. (Naturality follows from the interchange law or from the general theory of the codomain 2-fibration.)

The 2-functor $\ExtOne(-,\Nvar)$ composes strictly, and for the unitors observe that $\ExtOne(\Id_\H,\Nvar)$ is equal to $\Gamma_{\H,\Nvar}\Gamma_{\H,\Nvar}\inv\colon \ExtOne(\H,\Nvar) \to \ExtOne(\H,\Nvar)$ so that we make take as our unitors the unit of the adjunction $\Gamma_{\H,\Nvar}$. The necessary 2-functor axioms can then easily be seen to hold.

\begin{remark}
 The above argument also proves that the 2-functor sending adjoint extensions with fixed kernel $\Nvar$ to their cokernels is a 2-fibration.
\end{remark}

\begin{theorem}\label{family1}
 The 2-functors $\ExtOne(-,\Nvar)$ and $\Homop(-,\Nvar)$ are 2-naturally equivalent via $\Gamma^\Nvar\colon \Homop(-,\Nvar) \to \ExtOne(-,\Nvar)$ defined as follows:
  \begin{enumerate}
    \item for each $\H \in \FLTop$ we have the equivalence $\Gamma^\Nvar_\H = \Gamma_{\H,\Nvar} \colon \Hom(\H,\Nvar)\op \to \ExtOne(\H,\Nvar)$,
    \item for each $T\colon \H' \to \H$ we have the identity $\Gamma_{\H',\Nvar}\Hom(T,\Nvar)\op = \ExtOne(T,\Nvar)\Gamma_{\H,\Nvar}$.
  \end{enumerate}
\end{theorem}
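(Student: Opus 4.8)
The plan is to observe that almost all the work has already been carried out in the preceding discussion, so that the theorem amounts to packaging the resulting strict identities into the definition of a 2-natural transformation. That each component $\Gamma^\Nvar_\H = \Gamma_{\H,\Nvar}\colon \Hom(\H,\Nvar)\op \to \ExtOne(\H,\Nvar)$ is an equivalence is exactly \cref{thm:gammaequivalence}. Hence it suffices to check that the data in the statement defines a genuine 2-natural transformation $\Gamma^\Nvar$ between the 2-functors $\ExtOne(-,\Nvar),\ \Homop(-,\Nvar)\colon \FLTop\coop \to \Cat$, taking every 2-morphism component $\rho_T\colon \ExtOne(T,\Nvar)\,\Gamma_{\H,\Nvar} \to \Gamma_{\H',\Nvar}\,\Hom(T,\Nvar)\op$ to be the identity; invertibility of the $\rho_T$ is then automatic, and the conclusion that $\Gamma^\Nvar$ is a 2-natural \emph{equivalence} follows immediately.

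\textbf{The strict identity on 1-morphisms.} First I would record that the equation $\ExtOne(T,\Nvar)\,\Gamma_{\H,\Nvar} = \Gamma_{\H',\Nvar}\,\Hom(T,\Nvar)\op$ of functors $\Hom(\H,\Nvar)\op \to \ExtOne(\H',\Nvar)$ holds on the nose, not merely up to isomorphism. On objects, $\ExtOne(T,\Nvar)$ was defined (using \cref{prop:commaobject}) so as to send the glueing extension $\splitext{\Nvar}{\pi^F_{1*}}{\Gl(F)}{\pi^F_2}{\pi^F_{2*}}{\H}$ to the 2-pullback $\splitext{\Nvar}{\pi^{FT}_{1*}}{\Gl(FT)}{\pi^{FT}_2}{\pi^{FT}_{2*}}{\H'}$, which is precisely $\Gamma_{\H',\Nvar}(FT) = \Gamma_{\H',\Nvar}\bigl(\Hom(T,\Nvar)\op(F)\bigr)$. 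On morphisms, if $\Psi$ corresponds to $\psi\colon F_2\to F_1$ then the computation preceding the theorem showed $\ExtOne(T,\Nvar)(\Psi) = \Gamma_{\H',\Nvar}(\psi T)$, and $\psi T = \Hom(T,\Nvar)(\psi)$; it is here that one uses \cref{cor:extensions_only_1_cat}, since morphisms of $\ExtOne(\H',\Nvar)$ are isomorphism classes of morphisms of extensions, so two such morphisms sharing an associated natural transformation are literally equal. Thus $\rho_T = \id$ is well-defined.

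\textbf{The coherence conditions.} It remains to verify the three axioms for a 2-natural transformation. The conditions concerning composition and identities of 1-morphisms hold because both 2-functors compose strictly --- $\ExtOne(-,\Nvar)$ by construction, and $\Homop(-,\Nvar)$ as a composite of $\Hom$ with the opposite-category 2-functor --- and because, when restricted to the glueing-form subcategories, $\ExtOne(\Id_\H,\Nvar) = \Gamma_{\H,\Nvar}\Gamma_{\H,\Nvar}\inv = \Id$ (again using that $\Gamma_{\H,\Nvar}\inv$ is a strict section and that morphisms are isomorphism classes), so that its unitor is an identity and matches the identity unitor of $\Homop(-,\Nvar)$; with $\rho_T$, the compositors and the unitors all being identities, the relevant diagrams commute trivially. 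The remaining axiom, naturality with respect to a 2-morphism $\tau\colon T\to T'$, is the equation $\ExtOne(\tau,\Nvar)\,\Gamma_{\H,\Nvar} = \Gamma_{\H',\Nvar}\,\Hom(\tau,\Nvar)\op$, and this is exactly what was established in the discussion preceding the theorem, where $\ExtOne(\tau,\Nvar)_{\Gamma_{\H,\Nvar}(F)}$ was identified with the morphism of extensions corresponding to $\Hom(\tau,\Nvar)_F$. Since all axioms hold and each $\Gamma^\Nvar_\H$ is an equivalence, $\Gamma^\Nvar$ is a 2-natural equivalence.

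\textbf{Main obstacle.} There is no substantial new computation; the single point I would take care over is to make explicit that the constructions of the previous section produce \emph{strict} equalities rather than mere equivalences of extensions --- which is what allows all the coherence 2-morphisms of $\Gamma^\Nvar$ to be taken to be identities and renders the 2-naturality axioms vacuous. This rests throughout on \cref{cor:extensions_only_1_cat}, which collapses the a priori 2-dimensional bookkeeping to the 1-dimensional situation.
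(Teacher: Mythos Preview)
Your proposal is essentially correct and follows the same approach as the paper: verify the strict equality in point~(2) by inspection of how $\ExtOne(T,\Nvar)$ was defined, take all 2-morphism components to be identities, and observe that the coherence diagrams then collapse.

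There is one spot where your justification is slightly incomplete. You argue that on the glueing-form subcategory $\ExtOne(\Id_\H,\Nvar) = \Gamma_{\H,\Nvar}\Gamma_{\H,\Nvar}\inv = \Id$, ``so that its unitor is an identity''. But the unitor $\kappa$ of $\ExtOne(-,\Nvar)$ was \emph{defined} to be the unit $\Phi$ of the adjunction $\Gamma_{\H,\Nvar}\inv \dashv \Gamma_{\H,\Nvar}$, and the mere fact that a functor equals the identity does not force a natural isomorphism $\Id \to \Id$ to be the identity. What is actually needed for the unit axiom is that $\kappa\,\Gamma_{\H,\Nvar} = \Phi\,\Gamma_{\H,\Nvar}$ is the identity, and this holds by the \emph{triangle identity} for $\Gamma_{\H,\Nvar}\inv \dashv \Gamma_{\H,\Nvar}$ (the counit being the identity). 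The paper flags exactly this point with its remark that ``marginally more work is required to show that the unitor at $\H$ whiskered with $\Gamma_{\H,\Nvar}$ is in fact the identity''. Once you insert the triangle-identity argument, your proof is complete and matches the paper's.
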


\begin{proof}
The equality in point (2) is clear by inspection of the definition of $\ExtOne(T,\Nvar)$.
The proof of the coherence conditions is easy. In particular, the first coherence condition is satisfied because each morphism of the diagram is the identity. Similarly, for the second condition again all morphisms are the identity (though marginally more work is required to show that the unitor at $H$ whiskered with $\Gamma_{\H,\Nvar}$ is in fact the identity).
\end{proof}

We turn our attention to the functor $\ExtOne(\H,-)\colon \FLTop\co \to \Cat$.
We could not find an elegant description of this in terms of a 2-fibration.
However, we believe a reasonable definition can be given by dualising our arguments for $\ExtOne(-,\Nvar)$.

Naturally, for an object $\Nvar$ we have that $\ExtOne(\H,\Nvar)$ is just the category of adjoint split extensions.

For 1-morphisms, consider $S\colon \Nvar \to \Nvar'$. We would have $\ExtOne(\H,S)$ act on objects by sending $\splitext{\Nvar}{K}{\G}{E}{E_*}{\H}$ to the extension resulting from a pushout of $K$ along $S$ and on morphisms via the universal property of the pushout.

\begin{center}
  \begin{tikzpicture}[node distance=2.75cm, auto]
    \node (A) {$\G$};
    \node (B) [below of=A] {$\mathcal{P}$};
    \node (C) [right of=A] {$\H$};
    \node (D) [right of=B] {$\H$};
    \node (E) [left of=A] {$\Nvar$};
    \node (F) [left of=B] {$\Nvar'$};
    \draw[->] (A) to node [swap] {} (B);
    \draw[transform canvas={yshift=0.5ex}, ->] (A) to node {$E$} (C);
    \draw[transform canvas={yshift=0.5ex}, ->] (B) to node {$\coker(P)$} (D);
    \draw[double equal sign distance] (C) to node {} (D);
    \draw[->] (E) to node {$K$} (A);
    \draw[->] (F) to node [swap] {$P$} (B);
    \draw[transform canvas={yshift=-0.5ex}, ->] (D) to node {$\coker(P)$} (B);
    \draw[transform canvas={yshift=-0.5ex}, ->] (C) to node {$E_*$} (A);
    \draw[->] (E) to node [swap] {$S$} (F);
    
    \begin{scope}[shift=({B})]
        \draw +(-0.25,0.75) -- +(-0.75,0.75) -- +(-0.75,0.25);
    \end{scope}
  \end{tikzpicture}
\end{center}

To see that this is well-defined we prove the following result dual to \cref{prop:commaobject}.

\begin{proposition}
    Let $F \colon \H \to \Nvar$ and $S\colon \Nvar \to \Nvar'$ be finite-limit-preserving functors. Then $\Gl(SF)$ is given by the following 2-pushout.
    \begin{center}
      \begin{tikzpicture}[node distance=3.25cm, auto]
        \node (A) {$\Nvar$};
        \node (B) [below of=A] {$\Nvar'$};
        \node (C) [right of=A] {$\Gl(F)$};
        \node (D) [right of=B] {$\Gl(SF)$};
        \draw[->] (A) to node [swap] {$S$} (B);
        \draw[->] (A) to node {$\pi_{1^*}$} (C);
        \draw[->] (B) to node [swap] {} (D);
        \draw[->] (C) to node {$P$} (D);
        \begin{scope}[shift=({D})]
            \draw +(-0.25,0.75) -- +(-0.75,0.75) -- +(-0.75,0.25);
        \end{scope}
      \end{tikzpicture}
    \end{center}
\end{proposition}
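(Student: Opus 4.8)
The plan is to dualise the proof of \cref{prop:commaobject}. By \cref{prop:glueingisadjoint} the glueings $\Gl(F)$ and $\Gl(SF)$ underlie adjoint split extensions, and since their reflectors satisfy $\pi_1^F\pi_{2*}^F = F$ and $\pi_1^{SF}\pi_{2*}^{SF} = SF$, applying \cref{prp:cocomma} exhibits $\Gl(F)$ as the cocomma object of $F\colon \H \to \Nvar$ and $\id_\H$, and $\Gl(SF)$ as the cocomma object of $SF\colon \H \to \Nvar'$ and $\id_\H$ (with the injections given in each case by the kernel and the cokernel splitting). So the target statement becomes the assertion that $\Gl(SF)$, qua cocomma of $SF$ and $\id_\H$, is the $2$-pushout of the cocomma of $F$ and $\id_\H$ along $S$ and $\pi_{1*}^F$.

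First I would describe the comparison functor $P$. Feeding the universal property of the cocomma $\Gl(F)$ the $2$-cocone consisting of $\pi_{1*}^{SF}S\colon \Nvar \to \Gl(SF)$, $\pi_{2*}^{SF}\colon \H \to \Gl(SF)$ and the $2$-morphism $\zeta^{SF}\pi_{2*}^{SF}$ (which fits the cocomma shape precisely because $\pi_1^{SF}\pi_{2*}^{SF} = SF$) produces the required finite-limit-preserving functor $P\colon \Gl(F) \to \Gl(SF)$, given concretely by $(N,H,\ell) \mapsto (SN, H, S\ell)$ and $(f,g) \mapsto (Sf, g)$. One checks that $P\pi_{1*}^F = \pi_{1*}^{SF}S$ and $P\pi_{2*}^F = \pi_{2*}^{SF}$ strictly once the terminal objects are normalised, so the coherence $2$-morphisms can be taken to be identities.

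Then I would paste the candidate $2$-pushout square --- with corners $\Nvar$, $\Nvar'$, $\Gl(F)$, $\Gl(SF)$, legs $S$, $\pi_{1*}^F$, $\pi_{1*}^{SF}$, $P$ and identity comparison $2$-cell --- onto the cocomma square for $\Gl(F)$ along the edge $\pi_{1*}^F\colon \Nvar \to \Gl(F)$. Using $S\circ F = SF$, $P\pi_{2*}^F = \pi_{2*}^{SF}$ and $P(\zeta^F\pi_{2*}^F) = \zeta^{SF}\pi_{2*}^{SF}$, the pasted diagram is exactly the cocomma square for $\Gl(SF)$ supplied by \cref{prp:cocomma}. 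One then concludes, in the same manner as the appeal to ``the converse direction of the pullback lemma'' in the proof of \cref{prop:commaobject}, that the candidate square is a (strict) $2$-pushout. Alternatively, and perhaps more transparently, one can compare universal properties directly: via the cocomma description of $\Gl(F)$, a $2$-cocone under $\Gl(F) \leftarrow \Nvar \rightarrow \Nvar'$ unpacks into exactly the data of a $2$-cocone under the cocomma span of $SF$ and $\id_\H$, and likewise for $2$-morphisms of cocones, which is the universal property of $\Gl(SF)$.

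The main obstacle will be the bookkeeping in this final step: matching the (non-invertible) structural $2$-cells of the two cocomma cocones with the invertible comparison $2$-cell of the putative $2$-pushout cocone, and verifying that the coherence conditions survive the passage from $\Gl(F)$ to $\Gl(SF)$ --- together with stating the converse pasting principle carefully in this mixed cocomma/$2$-pushout setting. Since essentially every $2$-cell involved can be arranged to be an identity after normalising terminal objects, I expect this to reduce to routine diagram chasing rather than a genuine difficulty.
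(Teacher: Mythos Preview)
Your approach is correct but does more work than the paper's. The key observation you are missing is that \cref{prop:commaobject} is stated for an \emph{arbitrary} 2-category $\C$, not just for $\FLTop$ or $\FLCat$. The paper exploits this: since $\Gl(F)$ is a cocomma in $\FLCat$ by \cref{prp:cocomma}, it is a comma object in $\FLCat\op$, and one may then apply \cref{prop:commaobject} verbatim in $\FLCat\op$ (with $S$ playing the role of $T$); reversing arrows converts the resulting 2-pullback back into the desired 2-pushout in $\FLCat$. This is a one-line formal dualisation.

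What you do instead is re-run the \emph{proof} of \cref{prop:commaobject} dually by hand: construct $P$ via the cocomma universal property, paste the candidate 2-pushout onto the cocomma square for $\Gl(F)$, identify the result with the cocomma square for $\Gl(SF)$, and invoke the dual pasting lemma. This is entirely valid and your description of $P$ and the pasting is accurate, but it duplicates work that the general statement of \cref{prop:commaobject} already packages. The bookkeeping concern you raise about mixing non-invertible cocomma 2-cells with invertible 2-pushout 2-cells is real in principle, but as you correctly anticipate it evaporates once the comparison 2-cells are taken to be identities; in the paper's approach it never arises at all, since the coherence is already handled inside \cref{prop:commaobject}.
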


\begin{proof}
By \cref{prp:cocomma}, we know that $\Gl(F)$ is a cocomma object in the 2-category $\FLCat$. Thus it is a comma object in $\FLCat\op$.
Applying \cref{prop:commaobject} and then reversing the arrows gives the desired result.
It is not hard to see that $P(N,H,\ell) = (S(N),H,S(\ell))$ and $P(f,g) = (S(f),g)$.
\end{proof}

Thus, fixing particular pushouts we can describe $\ExtOne(\H,S)$ concretely as sending an extension $\splitext{\Nvar}{K}{\G}{E}{E_*}{\H}$ to $\splitext{\Nvar}{\pi_{1*}}{\Gl(SK^*E_*)}{\pi_2}{\pi_{2*}}{\H}$.

As mentioned above, $\ExtOne(\H,S)$ should act on morphisms by the universal property of the pushout. Let $\Psi = (\Psi, \alpha_1, \beta_1, \gamma_1)$ be a morphism of extensions and consider the 2-cocone given by $P_2\Psi\pi^{F_1}_{1*} \xrightarrow[]{P_2\alpha\inv_1} P_2\pi_{1*}^{F_2} \Id_\Nvar = \pi_{1*}^{SF_2} \Id_{\Nvar'} S$ as in the following pasting diagram (where we omit the 2-morphisms do avoid clutter).

\begin{center}
  \begin{tikzpicture}[node distance=4.5cm, auto, on grid,
    cross line/.style={preaction={draw=white, -,line width=6pt}}
    ]
    
    \node (A) {$\Nvar$};
    \node [right of=A] (B) {$\Gl(F_2)$};
    \node [below of=A] (C) {$\Nvar'$};
    \node [right of=C] (D) {$\Gl(SF_2)$};
    
    \begin{scope}[shift=({D})]
        \draw [draw=black] +(-0.25,0.75) coordinate (X) -- +(-0.75,0.75) coordinate (Y) -- +(-0.75,0.25) coordinate (Z);
    \end{scope}
    
    \node [above left=1.4cm and 1.9cm of A.center] (A1) {$\Nvar$};
    \node [right of=A1] (B1) {$\Gl(F_1)$};
    \node [below of=A1] (C1) {$\Nvar'$};
    \node [right of=C1] (D1) {$\Gl(SF_1)$};
    
    \begin{scope}[shift=({D1})]
        \draw [draw=black!70] +(-0.22,0.78) coordinate (X1) -- +(-0.78,0.78) coordinate (Y1) -- +(-0.78,0.22) coordinate (Z1);
    \end{scope}
    
    \draw[->] (A1) to node {$\pi^{F_1}_{1*}$} (B1);
    \draw[->] (A1) to node [swap] {$S$} (C1);
    \draw[->] (B1) to node [pos=0.65] {$P_1$} (D1);
    \draw[->] (C1) to node [node on layer=over,fill=white,inner sep=2pt,outer sep=0pt,yshift=1pt] (K) [pos=0.28] {$\pi^{SF_1}_{1*}$} (D1);
    
    \draw[cross line, ->] (A) to node [pos=0.4] {$\pi^{F_2}_{1*}$} (B);
    \draw[cross line, ->] (A) to node [swap, pos=0.33] {$S$} (C);
    \draw[cross line, ->] (B) to node {$P_2$} (D);
    \draw[cross line, ->] (C) to node [swap] {$\pi^{SF_2}_{1*}$} (D);
    
    \begin{scope}
    \begin{pgfonlayer}{over}
    \clip (K.south west) rectangle (K.north east);
    \draw[->, opacity=0.7] (A) to (C);
    \end{pgfonlayer}
    \end{scope}
    
    \draw[double equal sign distance] (A1) to node [yshift=-2pt] {} (A);
    \draw[->] (B1) to node {$\Psi$} (B);
    \draw[double equal sign distance] (C1) to node [swap, yshift=2pt] {} (C);
    \draw[dashed,->] (D1) to node [swap,pos=0.35,yshift=2pt] {$\ExtOne(\H,S)(\Psi)$} (D);
  \end{tikzpicture}
\end{center}

Here the front, back and left faces commute strictly and the top face has associated invertible 2-morphism $\alpha_1\inv$.

Let $\psi$ be the natural transformation associated to $\Psi$. We will show that the map given by the universal property is $\Gamma_{\H,\Nvar'}(S\psi) = (\Gamma_{\H,\Nvar'}(S\psi),\alpha_2,\beta_2,\gamma_2)$. We define the associated 2-morphisms for the universal property of the 2-pushout as follows.
For the bottom face we use $\alpha_2\inv\colon \Gamma_{\H,\Nvar'}(S\psi)\pi_{1*}^{SF_1} \to \pi_{1*}^{SF_2}$.

As for the right-hand face, we note that $P_2 \Psi(N,H,\ell)$ is given by the following diagram.

\begin{center}
  \begin{tikzpicture}[node distance=3.0cm, auto]
    \node (A) {$S(\overline{N})$};
    \node (B) [below of=A] {$S(N)$};
    \node (C) [right of=A] {$SF_2(H)$};
    \node (D) [right of=B] {$SF_1(H)$};
    \draw[->] (A) to node [swap] {$S(\overline{\psi_H})$} (B);
    \draw[->] (A) to node {$S(\overline{\ell})$} (C);
    \draw[->] (B) to node [swap] {$S(\ell)$} (D);
    \draw[->] (C) to node {$S\psi_H$} (D);
    \begin{scope}[shift=({A})]
        \draw +(0.25,-0.75) -- +(0.75,-0.75) -- +(0.75,-0.25);
    \end{scope}
  \end{tikzpicture}
\end{center}
On the other hand, we have that $\Gamma_{\H,\Nvar'}(S\psi)P_1(N,H,\ell)$ is given by a similar diagram as follows.
\begin{center}
  \begin{tikzpicture}[node distance=3.0cm, auto]
    \node (A) {$\overline{S(N)}$};
    \node (B) [below of=A] {$S(N)$};
    \node (C) [right of=A] {$SF_2(H)$};
    \node (D) [right of=B] {$SF_1(H)$};
    \draw[->] (A) to node [swap] {$\overline{S(\psi_H)}$} (B);
    \draw[->] (A) to node {$\overline{S(\ell)}$} (C);
    \draw[->] (B) to node [swap] {$S(\ell)$} (D);
    \draw[->] (C) to node {$S\psi_H$} (D);
    \begin{scope}[shift=({A})]
        \draw +(0.25,-0.75) -- +(0.75,-0.75) -- +(0.75,-0.25);
    \end{scope}
  \end{tikzpicture}
\end{center}

Since $S$ preserves pullbacks, there is a natural family of isomorphisms $\mu_{(N,H,\ell)} = (\widehat{\mu}_{(N,H,\ell)},\id_H)$ from $(S(\overline{N}),H,S(\overline{\ell}))$ to $(\overline{S(N)},H,\overline{S(\ell)})$, which in particular satisfies that $\overline{S(\psi_H)}\widehat{\mu}_{(N,H,\ell)} = S(\overline{\psi_H})$. We take $\mu$ to be the 2-morphism associated to the right-hand face.

Now let us discuss the 2-cocone 2-morphism $P_2\alpha_1\inv$ in more detail. The component at $N$ is given by $(S\widehat{\alpha}_{1,N}\inv,\id)$ for $\widehat{\alpha}$ as defined \cref{sec:equivalence}.
Notice that $S(\widehat{\alpha}_{1,N}\inv)$ is given by the following diagram.
\begin{center}
  \begin{tikzpicture}[node distance=3.0cm, auto]
    \node (A) {$S(\overline{N})$};
    \node (B) [below of=A] {$S(N)$};
    \node (C) [right of=A] {$SF_2(1)$};
    \node (D) [right of=B] {$SF_1(1)$};
    \draw[->] (A) to node [swap] {$S(\widehat{\alpha}_{1,N}\inv)$} (B);
    \draw[->] (A) to node {$S(!)$} (C);
    \draw[->] (B) to node [swap] {$S(!)$} (D);
    \draw[->] (C) to node {$S \psi_1$} (D);
    \begin{scope}[shift=({A})]
        \draw +(0.25,-0.75) -- +(0.75,-0.75) -- +(0.75,-0.25);
    \end{scope}
  \end{tikzpicture}
\end{center}
We see that $S(\widehat{\alpha}_1\inv) = S(\overline{\psi_1})$. Similarly, we have $\overline{S(\psi_1)} = \widehat{\alpha}_2\inv S$. Thus, the equation $\overline{S(\psi_H)}\mu_N = S(\overline{\psi_H})$ from above reduces in this case to $\widehat{\alpha}_{2,N}\inv S \circ \widehat{\mu}_{(N,1,!)} = S \widehat{\alpha}_{1,N}\inv$. Consequently, we have $\alpha_2\inv S \circ \mu \pi_{1*}^{F_1} = P_2\alpha_1\inv$,
and hence $\Gamma_{\H,\Nvar}(S\psi)$ equipped with $\alpha_2\inv$ and $(\mu,\id)$ is indeed the desired map from the universal property of the pushout.

In summary, we have $\ExtOne(\H,S)(\Psi) = \Gamma_{\H,\Nvar'}(S\psi)$, which
completes the description of $\ExtOne(\H,S)$.

Finally, let $\sigma\colon S \to S'$ be a natural transformation. We shall describe the natural transformation $\ExtOne(\H,\sigma)\colon \ExtOne(\H,S') \to \ExtOne(\H,S)$ componentwise. We define $\ExtOne(\H,\sigma)_{\Gamma_{\H,\Nvar}(F)}$ through its left adjoint as follows.

\begin{center}
  \begin{tikzpicture}[node distance=3.0cm, auto]
    \node (A) {$\Nvar$};
    \node (B) [above of=A] {$\Nvar'$};
    \node (C) [right of=A] {$\Gl(F)$};
    \node (D) [right of=B] {$\Gl(S'F)$};
    \node (B') [right=2.7cm of D] {$\Gl(SF)$};
    \node (A') [below of=B'] {$\Gl(F)$};
    \node (C') [right of=A'] {$\Nvar$};
    \node (D') [right of=B'] {$\Nvar'$};
    \draw[->] (A) to node {$S'$} (B);
    \draw[->] (A) to node [swap] {$\pi^{F}_{1*}$} (C);
    \draw[->] (B) to node {$\pi^{S'F}_{1*}$} (D);
    \draw[->] (C) to node {$P_{S'}$} (D);
    \begin{scope}[shift=({D})]
        \draw +(-0.25,-0.75) -- +(-0.75,-0.75) -- +(-0.75,-0.25);
    \end{scope}

    \draw[->] (A') to node [swap] {$P_S$} (B');
    \draw[->] (C') to node {$\pi^{F}_{1*}$} (A');
    \draw[->] (D') to node [swap] {$\pi^{SF}_{1*}$} (B');
    \draw[->] (C') to node [swap] {$S$} (D');
    \begin{scope}[shift=({B'})]
        \draw +(0.25,-0.75) -- +(0.75,-0.75) -- +(0.75,-0.25);
    \end{scope}
    \draw[double equal sign distance] (C) to node [swap] {} (A');
    \draw[dashed, ->] (B') to node [swap] {$\ExtOne(\H,\sigma)_{\Gamma(F)}^*$} (D);
    \draw[->, highlightArrow] (A') to node [xshift=-2pt,yshift=-2pt] (T'') [swap] {$L_\sigma$} (D);
    \draw[double distance=4pt,-implies,highlightArrow] ($(T'')!0.3!(C)$) to node [swap,pos=0.3] {$\overline{\sigma}$} ($(T'')!0.6!(C)$);
  \end{tikzpicture}
\end{center}

Notice the resemblance of the above diagram to the one that arose when defining $\ExtOne(\tau,\Nvar)_{\Gamma_{\H,\Nvar}(F)}$. It has the same basic structure, but all 1-morphisms are pointing in the opposite direction. 

First we define $L_\sigma$ and $\overline{\sigma}$. Note that $P_{S'}(N,H,\ell) = (S'(N),H,S'(\ell))$ lies above the codomain of $\sigma_N$, with respect to the fibration $\pi_1^{S'F}$ (see \cref{prp:pi1fib}). Thus by the universal property, we may lift $\sigma_N$ to a map $\overline{\sigma_N}\colon (S(N),H,S'(\ell)\sigma_N) \to (S'(N),H,S'(\ell))$. We may define a functor $L_\sigma\colon \Gl(F) \to \Gl(S'F)$ which sends objects $(N,H,\ell)$ to $(S(N),H,S'(\ell)\sigma_N)$ and which sends morphisms $(f,g)\colon (N,H,\ell) \to (N',H',\ell')$ to $(S(f),g)$.
The pair $(S(f),g)$ can be seen to be a morphism in $\Gl(S'F)$ by considering the following diagram.
\begin{center}
    \begin{tikzpicture}[node distance=2.75cm, auto]
    \node (A) {$S(N)$};
    \node (B) [right of=A] {$S'(N)$};
    \node (C) [right of=B] {$S'F(H)$};
    \node (D) [below of=A] {$S(N')$};
    \node (E) [right of=D] {$S'(N')$};
    \node (F) [right of=E] {$S'F(H')$};
    \draw[->] (A) to node {$\sigma_N$} (B);
    \draw[transform canvas={yshift=0.5ex},->] (B) to node {$S'(\ell)$} (C);
    \draw[->] (D) to node [swap] {$\sigma_{N'}$} (E);
    \draw[transform canvas={yshift=0.5ex},->] (E) to node [swap] {$S'(\ell')$} (F);
    \draw[->] (B) to node [swap] {$S'(f)$} (E);
    \draw[->] (A) to node [swap] {$S(f)$} (D);
    \draw[->] (C) to node {$S'F(g)$} (F);
   \end{tikzpicture}
\end{center}
The left-hand square commutes by naturality of $\sigma$ and the right-hand square commutes since $(f,g)$ is morphism in $\Gl(F)$.
Now the $\overline{\sigma_N}$ arrange into a natural transformation $\overline{\sigma}\colon L_\sigma \to P_{S'}$.

If things are to behave dually, we should have $L_\sigma$ factor through $P_S$. By the naturality of $\sigma$ we have that the following diagram commutes.

\begin{center}
  \begin{tikzpicture}[node distance=3.0cm, auto]
    \node (A) {$S(N)$};
    \node (B) [below of=A] {$S(F(H))$};
    \node (C) [right of=A] {$S'(N)$};
    \node (D) [right of=B] {$S'(F(H))$};
    \draw[->] (A) to node [swap] {$S(\ell)$} (B);
    \draw[->] (A) to node {$\sigma_N$} (C); 
    \draw[->] (B) to node [swap] {$\sigma_{F(H)}$} (D);
    \draw[->] (C) to node {$S'(\ell)$} (D);
  \end{tikzpicture}
\end{center}

Thus, observe that $L_\sigma(N,H,\ell) = (S(N),H,S'(\ell)\sigma_N) = (S(N),H,\sigma_{F(H)}S(\ell))$. This perspective allows us to factor $L_\sigma$ as $L_\sigma = \Gamma_{\H,\Nvar'}(\sigma F)^* \circ P_S$. (Recall that the left adjoint to the functor $\Gamma_{\H,\Nvar'}(\sigma F)$ sends an object $(N,H,\ell)$ to $(N,H,\sigma_{F(H)}\ell)$.)

We might now hope to take $\ExtOne(\H,\sigma)_{F(H)}$ to be this resulting factor $\Gamma_{\H,\Nvar'}(\sigma F)^*$. However, this map goes in the `wrong' direction.
We can remedy this by taking the right adjoint and setting $\ExtOne(\H,\sigma)_{F(H)} = \Gamma_{\H,\Nvar'}(\sigma F)$.

In order to specify $\ExtOne(\H,-)$ completely, it only remains to discuss the compositors and unitors. As before we have that $\ExtOne(\H,-)$ composes strictly and we take the unitors to be the unit of the adjunction $\Gamma_{\H,\Nvar}^* \dashv \Gamma_{\H,\Nvar}$ in \cref{thm:gammaequivalence}.

We can express the relationship between $\ExtOne(\H,-)$ and $\Hom(\H,-)$ as follows.

\begin{theorem}\label{family2}
 $\ExtOne(\H,-)$ and $\Homop(\H,-)$ are 2-naturally equivalent via $\Gamma^\H\colon \Homop(\H,-) \to \ExtOne(\H,-)$ defined as follows
  \begin{enumerate}
    \item for each $\Nvar \in \FLTop$ we have the equivalence $\Gamma^\H_\Nvar = \Gamma_{\H,\Nvar} \colon \Hom(\H,\Nvar)\op \to \ExtOne(\H,\Nvar)$, 
    \item for each $S\colon \Nvar \to \Nvar'$ we have the identity $\Gamma_{\H,\Nvar}\Hom(\H,S)\op = \ExtOne(\H,S)\Gamma_{\H,\Nvar'}$.
  \end{enumerate}
\end{theorem}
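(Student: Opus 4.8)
The plan is to follow the proof of \cref{family1} almost line for line, trading the 2-pullback presentation of glueings for the 2-pushout one and the codomain-2-fibration construction for its dual. Point (1) of the statement requires nothing beyond \cref{thm:gammaequivalence}, which already exhibits each $\Gamma_{\H,\Nvar}$ as one half of an equivalence. For point (2) I would first record the object-level equality: having fixed pushouts so that $\ExtOne(\H,S)$ carries the extension $\Gamma_{\H,\Nvar}(F) = \splitext{\Nvar}{\pi_{1*}}{\Gl(F)}{\pi_2}{\pi_{2*}}{\H}$ to $\splitext{\Nvar'}{\pi_{1*}}{\Gl(SF)}{\pi_2}{\pi_{2*}}{\H} = \Gamma_{\H,\Nvar'}(SF)$, and since $\Hom(\H,S)\op$ sends $F$ to $SF$, the two composites agree on objects.

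For the action on morphisms I would appeal directly to the explicit computation carried out above, which identifies $\ExtOne(\H,S)(\Psi)$ with $\Gamma_{\H,\Nvar'}(S\psi)$, where $\psi$ is the natural transformation associated to $\Psi$; on the other side $\Hom(\H,S)\op$ sends $\psi$ to $S\psi$ and $\Gamma_{\H,\Nvar'}$ then returns $\Gamma_{\H,\Nvar'}(S\psi)$. Hence the naturality square commutes strictly, so the structure 2-isomorphism $\Gamma^\H_S$ may be taken to be the identity, while each component 1-morphism $\Gamma^\H_\Nvar = \Gamma_{\H,\Nvar}$ is the equivalence from (1). It then remains only to verify the coherence axioms of a 2-natural transformation.

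That verification is routine: the composition axiom holds because both $\ExtOne(\H,-)$ and $\Hom(\H,-)\op$ compose strictly and every structure 2-morphism in sight is an identity; and the naturality axiom for a 2-morphism $\sigma\colon S \to S'$ holds because, by construction, $\ExtOne(\H,\sigma)_{\Gamma_{\H,\Nvar}(F)} = \Gamma_{\H,\Nvar'}(\sigma F)$ is precisely $\Gamma_{\H,\Nvar'}$ applied to $\Hom(\H,\sigma)_F = \sigma F$. The one point that takes a little care, exactly as flagged in the proof of \cref{family1}, is the unit axiom: one must check that the chosen unitor of $\ExtOne(\H,-)$ --- the unit of the equivalence of \cref{thm:gammaequivalence} --- becomes the identity once whiskered with $\Gamma_{\H,\Nvar}$, so that it matches the trivial unitor of $\Hom(\H,-)\op$. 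I expect this last check to be the main (if still minor) obstacle; all the genuinely substantive work was already carried out, in setting up $\ExtOne(\H,S)$ and $\ExtOne(\H,\sigma)$ so that the relevant squares commute on the nose.
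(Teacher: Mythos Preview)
Your proposal is correct and follows essentially the same approach as the paper's proof, which is even terser: the paper simply notes that the equality in point (2) is ``clear by inspection of the definition of $\ExtOne(\H,S)$'' and that ``the necessary coherence conditions hold, because each involved morphism is an identity.'' Your write-up spells out the object- and morphism-level verifications and the coherence checks more explicitly than the paper does, and correctly flags the unit-axiom check (mirroring the analogous remark in the proof of \cref{family1}), but there is no substantive difference in strategy.
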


\begin{proof}
  Just as before, the equality in point (2) is clear by inspection of the definition of $\ExtOne(\H,S)$ and the necessary coherence conditions hold, because each involved morphism is an identity.
\end{proof}

A bifunctor theorem for 2-functors was discussed in \cite{faul20202dimensional} and gives the precise conditions that allow two families of 2-functors $M_B \colon \C \to \D$ and $L_C\colon \B \to \D$ to be collated into a bifunctor $P\colon \B \times \C \to \D$ for which $P(B,-)$ is isomorphic to $M_B$ and $P(-,C)$ is isomorphic to $L_C$. These conditions are that $L_C(B) = M_B(C)$ and that for each $f\colon B_1 \to B_2$ in $\B$ and $g\colon C_1 \to C_2$ in $\C$ there exists an invertible 2-morphism $\chi_{f,g}\colon L_{C_2}(g)M_{B_1}(f) \to M_{B_2}(f)L_{C_1}(g)$ satisfying certain coherence conditions reminiscent of those for distributive laws for monads. Such families together with $\chi$ are called a \emph{distributive law of 2-functors}.

For the families $\ExtOne(-,\Nvar)$ and $\ExtOne(\H,-)$ the first condition is immediate. Moreover, it is not hard to see that if $T\colon \H' \to \H$ and $S\colon \Nvar \to \Nvar'$ that $\ExtOne(\H,S)\ExtOne(T,\Nvar) = \ExtOne(T,\Nvar')\ExtOne(\H',S)$
so that we might choose $\chi_{T,S}$ to be the identity. The coherence conditions are then immediate.

Thus, we may apply the results of \cite{faul20202dimensional} to arrive at the 2-functor $(\ExtOne,\omega,\kappa)\colon \FLTop\op \times \FLTop \to \Cat$ defined below.

\begin{definition}
Let $(\ExtOne,\omega,\kappa)\colon \FLTop\coop \times \FLTop\co \to \Cat$ be the 2-functor defined as follows.
\begin{enumerate}
    \item $\ExtOne(\H,\Nvar)$ is the category of extensions of $\H$ by $\Nvar$,
    \item $\ExtOne(T,S) = \ExtOne(T,\Nvar')\ExtOne(\H',S)$ for functors $S\colon \Nvar \to \Nvar'$ and $T\colon \H' \to \H$,
    \item $\ExtOne(\tau,\sigma) = \ExtOne(\tau,\Nvar') \ast \ExtOne(\H',\sigma)$ for 2-morphisms $\sigma\colon S \to S'$ and $\tau\colon T \to T'$,
    \item $\omega$ is the identity,
    \item $\kappa_{\H,\Nvar}$ is given by the unit $\Phi$ of the adjunction $\Gamma_{\H,\Nvar}^* \dashv \Gamma_{\H,\Nvar}$ as defined in \cref{thm:glueingequivalence,thm:gammaequivalence}.
\end{enumerate}
\end{definition}

The 2-bifunctor $\Homop$ can be recovered as the collation of the functors obtained by fixing one of its components, $\Hom_{\mathrm{op}}(\H,-)$ and $\Hom_{\mathrm{op}}(-,\Nvar)$.
It is shown in \cite{faul20202dimensional} that `morphisms between distributive laws' can also be collated to give 2-natural transformations between the corresponding bifunctors. The 2-natural equivalences in \cref{family1,family2} can be collected into a 2-natural equivalence $\Gamma\colon \ExtOne \to \Hom_\mathrm{op}$ provided that $\Gamma^\Nvar_\H = \Gamma^\H_\Nvar$ and the Yang--Baxter equation holds. These conditions are immediate in our setting and so we obtain the following theorem.

\begin{theorem}
 The 2-functors $\ExtOne$ and $\Homop$ are 2-naturally equivalent via $\Gamma \colon\ExtOne \to \Homop$ in which $\Gamma_{\H,\Nvar} = \Gamma^\H_\Nvar$ and $\Gamma_{T,S}$ is the identity for all functors $S\colon \Nvar \to \Nvar'$ and $T\colon \H' \to \H$. 
\end{theorem}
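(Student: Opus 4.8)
The plan is to assemble the final statement from the machinery already in place, treating it as the collation of the two one-variable families $\ExtOne(-,\Nvar)$ and $\ExtOne(\H,-)$ via the bifunctor theorem of \cite{faul20202dimensional}. First I would recall that the bifunctor $(\ExtOne,\omega,\kappa)\colon \FLTop\coop \times \FLTop\co \to \Cat$ was already constructed above as the collation of these families, with $\omega$ the identity compositor and $\kappa_{\H,\Nvar}$ the unit $\Phi$ of $\Gamma_{\H,\Nvar}^* \dashv \Gamma_{\H,\Nvar}$. In exactly the same way, $\Homop$ arises as the collation of $\Homop(\H,-)$ and $\Homop(-,\Nvar)$, with trivial compositors and unitors (since $\Hom$ composes strictly and the opposite-category 2-functor $\mathrm{Op}$ does too). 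So both sides of the claimed equivalence are collations, and it remains only to collate the two 2-natural equivalences $\Gamma^\Nvar\colon \Homop(-,\Nvar) \to \ExtOne(-,\Nvar)$ of \cref{family1} and $\Gamma^\H\colon \Homop(\H,-) \to \ExtOne(\H,-)$ of \cref{family2}.

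Next I would invoke the part of \cite{faul20202dimensional} that says `morphisms between distributive laws' collate to 2-natural transformations between the collated bifunctors, and record the two hypotheses this requires: (i) the components must agree on the overlap, i.e. $\Gamma^\Nvar_\H = \Gamma^\H_\Nvar$, and (ii) a Yang--Baxter / hexagon-type compatibility between the two families of coherence 2-morphisms. For (i), both $\Gamma^\Nvar_\H$ and $\Gamma^\H_\Nvar$ are by definition the single functor $\Gamma_{\H,\Nvar}\colon \Hom(\H,\Nvar)\op \to \ExtOne(\H,\Nvar)$, so the agreement is literal. For (ii), I would point out that in \cref{family1,family2} the relevant structure 2-morphisms are all identities: the equalities $\Gamma_{\H',\Nvar}\Hom(T,\Nvar)\op = \ExtOne(T,\Nvar)\Gamma_{\H,\Nvar}$ and $\Gamma_{\H,\Nvar}\Hom(\H,S)\op = \ExtOne(\H,S)\Gamma_{\H,\Nvar'}$ hold on the nose (as established there by inspecting the explicit glueing-form descriptions), and the distributive-law 2-cells $\chi_{T,S}$ are the identity because $\ExtOne(\H,S)\ExtOne(T,\Nvar) = \ExtOne(T,\Nvar')\ExtOne(\H',S)$ strictly. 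Hence the Yang--Baxter equation reduces to an equality of identities and holds trivially.

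Finally I would conclude that the collated 2-natural transformation $\Gamma\colon \ExtOne \to \Homop$ has component $\Gamma_{\H,\Nvar} = \Gamma^\H_\Nvar = \Gamma_{\H,\Nvar}$ at each $(\H,\Nvar)$ — which is an equivalence by \cref{thm:gammaequivalence} — and structure 2-morphism $\Gamma_{T,S}$ equal to the identity for every $T\colon \H' \to \H$ and $S\colon \Nvar \to \Nvar'$, since both one-variable pieces contributing to it are identities. A 2-natural transformation all of whose components are equivalences is a 2-natural equivalence, giving the result. The only genuinely substantive step is verifying hypothesis (ii) — the Yang--Baxter / coherence compatibility of the two families — and I expect this to be the main obstacle in the sense of bookkeeping, though in our case it collapses entirely because every structure 2-cell in sight is an identity; the real content was already absorbed into \cref{family1,family2} and the strict commutation $\ExtOne(\H,S)\ExtOne(T,\Nvar) = \ExtOne(T,\Nvar')\ExtOne(\H',S)$.
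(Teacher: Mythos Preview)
Your proposal is correct and follows essentially the same approach as the paper: both invoke the collation result of \cite{faul20202dimensional} for morphisms of distributive laws, check that $\Gamma^\Nvar_\H = \Gamma^\H_\Nvar$ by definition, and observe that the Yang--Baxter condition is trivial because all the relevant structure 2-cells (the $\chi_{T,S}$ and the naturality 2-cells of $\Gamma^\Nvar$, $\Gamma^\H$) are identities. The paper's own proof is essentially the single sentence ``These conditions are immediate in our setting'', and your argument just unpacks why.
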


\bibliographystyle{abbrv}
\bibliography{bibliography}
\end{document}